\documentclass{amsart}
\usepackage{amssymb, amsmath,mathtools,bm}
\usepackage{mathrsfs}
\usepackage{amscd} 
\usepackage{bbm}
\usepackage{verbatim}
\usepackage{stmaryrd}
\usepackage[dvipsnames]{xcolor}
\usepackage{a4wide}
\usepackage{enumitem}
\usepackage[backend=biber,maxbibnames=199,doi=false,isbn=false,url=false]{biblatex}
\addbibresource{literature.bib}

\usepackage[colorlinks,linkcolor={blue},citecolor={blue},urlcolor={purple},]{hyperref}

\usepackage[colorinlistoftodos,prependcaption,textsize=tiny]{todonotes}



\newcommand{\dd}{\,\mathrm{d}}

\renewcommand{\MR}{\mathrm{MR}}
\newcommand{\UH}{\mathcal{L}_2(U,H)}

\newcommand{\<}{\langle}
\newcommand{\?}{\rangle}
\newcommand{\nn}{|\!|\!|}

\newcommand{\into}{\hookrightarrow}

\newcommand{\loc}{\mathrm{loc}}
\newcommand{\ceqq}{\coloneqq}
\newcommand{\eqqc}{\eqqcolon}
\newcommand{\col}{\colon}
 
\renewcommand{\H}{\mathbb{H}} 
\renewcommand{\O}{\mathcal{O}} 
\newcommand{\bphieps}{\bm{\varphi}^{\eps}}
\newcommand{\wtveps}{%
\mspace{2mu}%
  \widetilde{\mspace{-2mu}\rule{0pt}{1.3ex}\smash[t]{v^\eps}}%
}
\newcommand{\wt}{\widetilde}
\newcommand{\y}{\Xi}
\newcommand{\gtn}{G_{\theta,n}}
\newcommand{\gvn}{G_{v,n}}
\newcommand{\pr}{\mathbb{P}_{\h}}

\newcommand{\ft}{F_{\theta}}
\newcommand{\fvt}{f}
\newcommand{\gvtn}{g_n}

\newcommand{\fv}{F_{v}}

\newcommand{\pz}{\partial_3}
\newcommand{\pzz}{\partial_{33}} 
\newcommand{\DeltawR}{\Delta_{{\rm R}}^{{\rm w}}}

\newcommand{\LTphi}{\mathcal{L}_{\psi}}
\newcommand{\Lvphi}{\mathcal{L}_{\phi}}
\newcommand{\Lvp}{\mathcal{P}_{\phi}}


\newcommand{\Hs}{\mathbb{H}}

\newcommand{\Tor}{\mathbb{T}}

\newcommand{\Br}{\mathrm{B}}
\newcommand{\h}{{\rm H}}
\newcommand{\n}{{\rm N}}
\newcommand{\Dr}{\Delta_{{\rm R}}^{{\rm w}}}

\newcommand{\forcetwo}{\mathcal{E}}
\newcommand{\force}{\mathcal{F}}
\renewcommand{\div}{\mathrm{div}}

\renewcommand{\tilde}{\widetilde}

\theoremstyle{plain}
\newtheorem{theorem}{Theorem}[section]
\theoremstyle{remark}
\newtheorem{remark}[theorem]{Remark}

\theoremstyle{plain}

\newtheorem{lemma}[theorem]{Lemma}
\newtheorem{proposition}[theorem]{Proposition}
\newtheorem{definition}[theorem]{Definition}

\newtheorem{assumption}[theorem]{Assumption}

\numberwithin{equation}{section}

\def\N{{\mathbb N}}

\def\Q{{\mathbb Q}}
\def\R{{\mathbb R}}

\newcommand{\E}{\mathsf{E}}

\newcommand{\BB}{\mathcal{B}}
\newcommand{\F}{\mathcal{F}}

\renewcommand{\P}{\mathsf{P}}
\newcommand{\hhp}{\mathbb{P}}
\newcommand{\PP}{\mathcal{P}}

\newcommand{\g}{\gamma}

\newcommand{\om}{\omega}
\newcommand{\Om}{\Omega}
\newcommand{\eps}{\varepsilon}

\usepackage{stmaryrd}


\allowdisplaybreaks

\begin{document}

\author[Antonio Agresti]{Antonio Agresti}
\address[A. Agresti]{Department of Mathematics Guido Castelnuovo\\ Sapienza University of Rome\\ P.le Aldo Moro
5\\ 00185 Rome\\ Italy}
\email{agresti.agresti92@gmail.com}
\author[Esm\'ee Theewis]{Esm\'ee Theewis}
\address[E. Theewis]{Delft Institute of Applied Mathematics\\
Delft University of Technology \\ P.O. Box 5031\\ 2600 GA Delft\\The
Netherlands}
\email{e.s.theewis@tudelft.nl}

\thanks{The first author is a member of GNAMPA (IN$\delta$AM). The second author is supported by the VICI subsidy VI.C.212.027 of the Netherlands Organisation for Scientific Research (NWO)}

\date{\today}

\title[LDP for the stochastic primitive equations]{The large deviation principle for the stochastic 3D primitive equations with transport noise}

\keywords{Large deviation principle, stochastic partial differential equations, primitive equations, transport noise, stochastic evolution equations}

\subjclass[2020]{Primary: 35Q86, Secondary: 60F10, 60H15, 35R60, 76M35, 76U60} %

\begin{abstract}
We prove the small-noise large deviation principle for the three-dimensional primitive equations with transport noise and turbulent pressure.  
Transport noise is important for  geophysical fluid dynamics applications, as it takes  into account the effect of small scales on the large scale dynamics.  
The main mathematical challenge  is that we allow for the transport noise to act on the full horizontal velocity,  
therefore leading to a non-trivial turbulent pressure, which requires an involved analysis to obtain the necessary energy bounds. Both Stratonovich and It\^o noise are treated. 
\end{abstract}

\maketitle
\addtocontents{toc}{\protect\setcounter{tocdepth}{2}}


\section{Introduction}

The primitive equations are one of the fundamental models in geophysical fluid dynamics, and they are often used to analyze oceanic and atmospheric dynamics, see e.g.\  \cite{pedlosky2013geophysical,vallis2017atmospheric}. In the deterministic setting, the 3D primitive equations can be derived from the 3D (anisotropic) compressible Navier--Stokes equations in domains where the vertical scale is much smaller than the horizontal scale, by employing the Boussinesq and the small aspect ratio limit, see e.g.\  \cite[\S 1.2]{HH20}. The latter, also referred to as hydrostatic approximation, has been rigorously justified in various settings in \cite{AG01_hydrostatic,FGHHKW20,FGHKW25,LT19}. 

Our primary goal of this manuscript is to prove a large deviation principle (LDP in what follows) for the primitive equations with transport noise on the three-dimensional domain $\O\ceqq\Tor^2\times (-h,0)$, where $h>0$. A special case of the equations that we consider is given by the following system:  

\begin{equation}
\label{eq:primitive_intro}
\left\{
\begin{aligned}
&\dd v^\varepsilon + \Delta v^\varepsilon \dd t =  [-\nabla_{\h} P^\varepsilon - (v^\varepsilon\cdot \nabla_\h )v^\varepsilon - w^\varepsilon \partial_3 v^\varepsilon 
]\dd t+ \sqrt{\varepsilon}\, \textstyle{\sum_{n\geq 1}} \big[(\phi_n\cdot \nabla )v^\varepsilon -\nabla \wt{P}_n^\varepsilon\big]\circ \dd \beta^n_t,\\
&\dd \theta^\varepsilon+ \Delta \theta^\varepsilon  \dd t =[- (v^\varepsilon\cdot \nabla_\h )\theta^\varepsilon - w^\varepsilon \partial_3 \theta^\varepsilon]\dd t 
+ \sqrt{\varepsilon}\, \textstyle{\sum_{n\geq 1}} (\psi_n\cdot \nabla )\theta^\varepsilon \circ \dd \beta^n_t,\\
&\nabla_\h \cdot v^\varepsilon + \partial_3 w^\varepsilon =0,\\
&\partial_3 P^\varepsilon+ \theta^\varepsilon=0, \\ 
&\partial_3 \wt{P}_n^\varepsilon=0,\\
& v^\varepsilon(0)=v_0, \quad \theta^\varepsilon(0)=\theta_0,
\end{aligned}
\right. 
\end{equation}
where the unknowns are  $v^\varepsilon\col[0,\infty)\times \Om\times \O\to \R^2$ and $w^\varepsilon\col[0,\infty)\times \Om\times \O\to \R$ denoting respectively the horizontal and vertical component of the velocity of the fluid,  the pressures $P^\varepsilon\col[0,\infty)\times \Om\times \O\to \R$ and $\wt{P}_n^\varepsilon\col[0,\infty)\times \Om\times \O\to \R$, and the temperature $\theta^\varepsilon\col[0,\infty)\times \Om\times \O\to \R$. Moreover, $\nabla_\h=(\partial_1,\partial_2)$ denotes the horizontal gradient, $(\beta^n)_{n\geq 1}$ is a sequence of standard Brownian motions on a filtered probability space $(\Om,\F,\P, (\F_t)_{t\geq 0})$, and $\circ$ denotes the Stratonovich product. 
The reader is referred to Section \ref{sec:prelims} for the remaining  unexplained notations. 
The above system is complemented with the following boundary conditions: 
\begin{equation}
\label{eq:boundary_conditions_strong_weak_intro}
\begin{aligned}
\pz  v^\eps (\cdot,-h)=\pz  v^\eps(\cdot,0)=0 \ \  \text{ on }\Tor^2,&\\
\pz  \theta^\eps(\cdot,-h)=\pz  \theta^\eps(\cdot,0)+\alpha \theta^\eps(\cdot,0)=0\ \  \text{ on }\Tor^2.&
\end{aligned}
\end{equation}

The mathematical study of primitive equations started with the seminal works by Lions, Temam, and Wang \cite{LTW92_1,LTW92_2,LTW92_3}, who obtained existence of Leray's type global solutions to the 3D primitive equations with initial data of finite energy, i.e.\ $v\in L^2(\O;\R^2)$ and $\theta_0\in L^2(\O)$.  
Later, a turning point was reached when Cao and Titi proved global well-posedness of the 3D primitive equations for initial data in $H^1(\O;\R^2)\times H^1(\O)$ in \cite{CT07}. 
The latter result was subsequently extended in many directions, see e.g.\   \cite{KZ07,HK16,GGHHK20,HH20} and the references therein. Moreover, after the seminal work by Cao and Titi \cite{CT07}, the primitive equations have attracted a lot of interest in the SPDE community, see e.g.\  \cite{AHHS22,AHHS25nonisothermal,BS21,DGHT11,DGHTZ12,DMM24,GKVZ14,GTW17_PRIMITIVE} and the references therein. 

The addition of additive and multiplicative noise to the primitive equations (or, more generally, models for geophysical flows) can be used, for instance, to account for numerical and empirical uncertainties and errors. Physical derivations of geophysical models with noise (including the stochastic primitive equations) can be found in e.g.\  \cite{AHHS25nonisothermal,AHN02,DM25,debussche2024second,H15_variational,holm2021stochastic,L25_Ober,P25_Ober}. 
In the current work, we focus on transport noise and its lower-order perturbation, and handle both It\^o noise and Stratonovich noise. See \eqref{eq:PE original} and  \eqref{eq:primitive_Stratonovich_boundary_conditions} for the full systems. There are several reasons for choosing transport noise, also known as Kraichnan's noise due to the fundamental works \cite{K68,K94}.  Firstly, transport noise can be physically motivated via two-scale expansion \cite{FP22,debussche2024second}, location uncertainty \cite{M14_local}, a stochastic Lagrangian viewpoint \cite{MR04_SIAM,FL23_book}, and many more  \cite{F08,BCF91,BF20}. 
In this context, the Stratonovich formulation arises naturally as it can be seen as a limit of smooth approximations of Brownian paths (such results are typically referred to as Wong--Zakai results). For this reason, the Stratonovich formulation is often considered more realistic from a physical point of view. 
Secondly, from a mathematical viewpoint, transport noise presents all the key challenges needed in handling most of the variants of stochastic perturbation of primitive equations, while being relatively simple to track mathematically.

\subsection{Motivation and LDP} 
The effect of small noise on geophysical flows is known to be of central importance for oceanic and atmospheric dynamics  \cite{GH11_rare,galfi21,SGD}. In the presence of very small noise, one expects the solution to the stochastic primitive equations \eqref{eq:primitive_intro} (with small $\eps>0$) to resemble the solution to the deterministic primitive equations (i.e.\  \eqref{eq:primitive_intro} with $\varepsilon=0$). However,  a `rare event' may happen: with a non-zero probability, the small-noise solution may be significantly different from the deterministic solution. 
Being able to quantify the probabilities for such rare events is notably important when it concerns real world phenomena, and certainly in a climate context.   
To mention one relevant example, in \cite{SGD}, using a 2D Boussinesq model, it is shown   that the presence of a small noise could lead to an overturning collapse of the Atlantic Meridional Overturning Circulation, which would  severely affect the global climate. 
The work in this paper provides a mathematical foundation for potential similar analyses of 3D ocean and atmosphere models. 

One way to capture (extremely) small probabilities as described above is by means of an LDP. 
The latter provides an exponential rate of convergence in the small-noise limit $\varepsilon\downarrow 0$ of the solution $(v^\varepsilon,\theta^\eps)$ of \eqref{eq:primitive_intro} to the solution $(v^0,\theta^0)$ corresponding to 
\eqref{eq:primitive_intro} with $\varepsilon=0$. 
Roughly speaking, the LDP tells us the following: for (deterministic) paths $(V,\Theta)\col [0,T]\to H^1(\O;\R^2)\times L^2(\O)$, we have 
\[
\P((v^\eps,\theta^\eps)\text{ is close to } (V,\Theta))\approx e^{- I(V,\Theta)/\varepsilon}\; \text{ for   sufficiently small $\eps>0$}, 
\]
where $I(\cdot)$ is the \emph{rate function}. The \emph{rate} $I(V,\Theta)$ thus measures the likeliness of $(v^\eps,\theta^\eps)$ to realize (approximately) as the given  path $(V,\Theta)$. 
Above, we consider only the part $(v^\eps,\theta^\eps)$ of the solution to \eqref{eq:primitive_intro}, since the unknown vertical velocity $w^\eps$ and $\nabla P^\eps$ and $\nabla\wt{P}_n^\varepsilon$ are in turn fully determined by  $(v^\eps,\theta^\eps)$. 
For precise statements, we refer to Definition \ref{def: LDP} and Theorem \ref{th:PE LDP}. 

Having motivated the LDP, we now state a special case of our main results.  Let us mention that besides the Stratonovich transport noise discussed in this introduction, we will also treat It\^o transport noise.  The most general versions of our results can be found in Theorem \ref{th:PE LDP} and Theorem \ref{th:stratonovich}. 
\begin{theorem}[LDP for the primitive equations with Stratonovich transport noise]
\label{t:intro_3D_primitive}
Suppose that $(\phi_n)_{n\geq 1},(\psi_n)_{n\geq 1}\in C^{1+\alpha}(\O;\ell^2)$ for some $\alpha>0$, and $\nabla \cdot \psi_n=0$ in distribution on $\O$ for all $n\geq 1$. Moreover, letting $\phi^j_n$ and $\psi^j_n$ be the components of $\phi_n$ and $\psi_n$ for $j\in \{1,2,3\}$, respectively; assume that 
\begin{align}
\label{eq:phi1_phi2_condition_intro}
\phi_n^1,\phi_n^2&\text{ is independent of }x_3,\\
\label{eq:phi1_phi2_condition_intro2}
\phi_n^3 =\psi_n^3 &=0 \text{ on }\partial\O=\Tor^2\times \{-h,0\}.
\end{align}
Then, the solutions (Definition \ref{def:sol_strong_weak}) to the stochastic primitive equations \eqref{eq:primitive_intro} satisfy the LDP  for $\eps\downarrow 0$  (see Definition \ref{def: LDP}), with rate function given by 
$$
I(V,\Theta)=\inf\Big\{\int_{0}^T \|\varphi(t)\|_{\ell^2}^2\dd t  \,:\, (V,\Theta)=(V^\varphi,\Theta^\varphi)\Big\}
$$
where $(V^\varphi,\Theta^\varphi)$ is the unique solution to the following skeleton equation:   
\begin{equation}
\label{eq:primitive_intro skeleton} 
 \left\{
\begin{aligned}
& v' + \Delta v  = -\nabla_{\h} P - (v\cdot \nabla_\h )v - w  \partial_3 v 
+  \textstyle{\sum_{n\geq 1}}\big[(\phi_n\cdot \nabla )v -\nabla \wt{P}_n \big]\varphi_n,\\
& \theta'+ \Delta \theta  = - (v\cdot \nabla_\h )\theta - w  \partial_3 \theta 
+  \textstyle{\sum_{n\geq 1}} (\psi_n\cdot \nabla )\theta \varphi_n,\\
&\nabla_\h \cdot v + \partial_3 w  =0,\\
&\partial_3 P + \theta=0, \\ 
&\partial_3 \wt{P}_n =0,\\
& v(0)=v_0, \quad \theta(0)=\theta_0, 
 \\[.45em]
& \text{ and $(v,\theta)$ satisfies the boundary conditions \eqref{eq:boundary_conditions_strong_weak_intro}}.\end{aligned}
\right. 
\end{equation} 
\end{theorem}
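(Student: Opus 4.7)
The plan is to use the weak-convergence formulation of large deviations due to Budhiraja--Dupuis--Maroulas, which reduces the LDP to two conditions on the family of controlled equations obtained via Girsanov's theorem. Condition~(a) is the well-posedness and continuity of the \emph{skeleton map}: for every $\varphi\in L^2(0,T;\ell^2)$, \eqref{eq:primitive_intro skeleton} admits a unique solution $(V^\varphi,\Theta^\varphi)$, and the map $\varphi\mapsto (V^\varphi,\Theta^\varphi)$ is continuous on sublevel sets $\{\|\varphi\|_{L^2(0,T;\ell^2)}\le M\}$ endowed with the weak $L^2$ topology, taking values in the path-space topology in which the LDP is formulated. Condition~(b) is distributional convergence, as $\eps\downarrow 0$, of the Girsanov-shifted SPDE driven by an arbitrary bounded predictable control $\varphi^\eps$ converging in law to some $\varphi$, towards the skeleton solution $(V^\varphi,\Theta^\varphi)$.

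\textbf{Skeleton well-posedness.} First I would convert the Stratonovich system to It\^o form; because $(\phi_n),(\psi_n)\in C^{1+\alpha}(\O;\ell^2)$, the Stratonovich--It\^o corrector is a well-behaved second-order operator in divergence form that can be absorbed into the viscous part. I would then establish (a) by adapting the Cao--Titi $H^1$ strategy to the controlled deterministic problem: derive $L^2$ and $H^1$ energy identities in which the control contributes terms estimated by Cauchy--Schwarz in $\ell^2\times L^2(0,T)$ against $\|\nabla v\|_{L^2}\,\|\varphi\|_{\ell^2}$. The turbulent pressure $\wt P_n$ is identified through the constraint $\nabla_\h\cdot v=0$ integrated in $x_3$ with the boundary conditions \eqref{eq:boundary_conditions_strong_weak_intro}, and its contribution vanishes after testing against $v$ thanks to the $x_3$-independence of $\phi_n^{1,2}$ from \eqref{eq:phi1_phi2_condition_intro}, which makes the horizontal projection compatible with the transport term. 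Uniqueness follows from a difference argument and Gronwall. Weak-to-strong continuity of the skeleton map is then obtained by an Aubin--Lions compactness argument: uniform bounds in $L^\infty_tH^1\cap L^2_tH^2$ yield strong convergence in $L^2_tH^1$, which suffices to pass to the limit in the quadratic nonlinearities, while the linear control term $\sum_n(\phi_n\cdot\nabla)v\,\varphi_n$ passes by the weak-strong pairing.

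\textbf{Controlled SPDE and the main obstacle.} Condition~(b) is the hard part: one must produce uniform-in-$\eps$ energy bounds for the controlled SPDE and identify the weak limit. The key difficulty, as highlighted in the abstract, is the non-trivial turbulent pressure $\nabla\wt P_n^\eps$ that couples back into the velocity equation because the transport noise acts on the full horizontal velocity. My plan is to leverage the energy estimates already developed earlier in the paper for the stochastic system, augmenting them with the extra drift driven by $\varphi^\eps$ using that $\|\varphi^\eps\|_{L^2(0,T;\ell^2)}$ is bounded almost surely uniformly in $\eps$; the factor $\sqrt{\eps}$ in front of the remaining martingale part, combined with Burkholder--Davis--Gundy and the uniform bounds on the noise coefficients, shows that the genuinely stochastic contribution is $o(1)$ in the relevant $L^p(\Om)$ norms. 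Once uniform bounds in $L^\infty_tH^1\cap L^2_tH^2$ are in place, tightness follows, and a Jakubowski--Skorohod representation gives almost-sure convergence along a subsequence on an auxiliary probability space. The limit is identified as $(V^\varphi,\Theta^\varphi)$ by passing to the limit in the weak formulation: the quadratic terms $(v^\eps\cdot\nabla_\h)v^\eps$ and $w^\eps\partial_3 v^\eps$ are handled via the $L^2_tH^2$ bound (which gives the required regularity of $w^\eps$), the control-driven term passes by weak--strong pairing, and the pressure terms pass by the corresponding horizontal-projection identities. Uniqueness of the skeleton then upgrades subsequential convergence to full convergence, completing condition~(b).
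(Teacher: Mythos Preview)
Your overall architecture (weak-convergence approach, Stratonovich-to-It\^o conversion, Cao--Titi style a priori estimates for the skeleton, controlled SPDE bounds for condition~(b)) is broadly aligned with the paper. However, there is a genuine gap in your plan for condition~(b), and the paper's route differs from yours in an important way.

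\textbf{The gap.} You write that the martingale contribution in the controlled SPDE is $o(1)$ ``in the relevant $L^p(\Omega)$ norms'' and that this yields ``uniform bounds in $L^\infty_tH^1\cap L^2_tH^2$''. This is precisely the point that fails. As the paper stresses in \S1.3, in the strong-weak Gelfand triple $(H,V)=(H^1\times L^2,\,H^2\times H^1)$ the pair $(A,B)$ is \emph{not} coercive, and no $L^p(\Omega)$ moment bound of the form $\E\|v^\eps\|_{L^\infty_tH^1\cap L^2_tH^2}^p<\infty$ is known (cf.\ the $\log\log$ tail bound \eqref{eq:bound_Om_weak}). The Gr\"onwall step that closes the $H^1$ estimate involves random weights like $\|\partial_3 v^\eps\|_{L^2}^2\|\partial_3 v^\eps\|_{H^1}^2$ and $\|\bar v^\eps\|_{H^1}^4$, which cannot be bounded uniformly in expectation; taking expectations and applying BDG therefore does not close. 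Your tightness/Skorohod program would need to be reworked to rely only on bounds \emph{in probability}, which is more delicate than you indicate.

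\textbf{What the paper does instead.} Rather than run the Budhiraja--Dupuis--Maroulas scheme from scratch, the paper invokes an abstract LDP result (Theorem~\ref{th:LDP general Strat}, from \cite{T25L2LDP}) designed for non-coercive $L^2$ settings. This reduces the Stratonovich LDP to: (i) structural/local-Lipschitz conditions on $(A,B)$; (ii) well-posedness; (iii) a deterministic a priori bound for the skeleton (your condition~(a), proved in \S\ref{sec:skeleton} via the barotropic/baroclinic split); and (iv) an a priori bound for the controlled SPDE \emph{in probability only}, i.e.\ $\sup_\eps\P(\|X^\eps\|_{\MR}>\gamma)\to 0$ as $\gamma\to\infty$. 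The last is obtained in \S\ref{sec:tiltedspde} by the \emph{stochastic Gr\"onwall lemma} (Geiss), which accommodates the random Gr\"onwall weights without any moment hypothesis. The It\^o--Stratonovich correction $\eps L$ contributes extra terms at each step, all absorbed for small $\eps$; a final rescaling trick recovers the full range of noise coefficients without assuming the parabolicity condition for $B$ outright. Note also that the skeleton equation carries no Stratonovich correction (it scales with $\eps$), so ``absorbing the corrector into the viscous part'' is relevant only for the stochastic estimates, not for~(a).
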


Before going further, let us comment on the assumptions in the above theorem. Firstly, the regularity of the noise coefficients shows that we can deal with $\phi$ given by a regular Kraichnan noise, cf.\ \cite[\S5]{GY25} and \cite[\S2]{agresti23roughtransport}.
The condition \eqref{eq:phi1_phi2_condition_intro2} is natural because it ensures that the flow generated by $\phi_n^3$ does not escape from the domain (in other words, it is a no-penetration condition). This is consistent with the interpretation of the transport noise through stochastic Lagrangian flows \cite{MR04_SIAM}.
On the other hand, the condition \eqref{eq:phi1_phi2_condition_intro} has only a partial physical meaning, see Section 2 (and especially Remark 2.2) in \cite{AHHS25nonisothermal}. However, the condition \eqref{eq:phi1_phi2_condition_intro} is at the moment the weakest condition on $\phi$ allowing for a global well-posedness of the stochastic system \eqref{eq:primitive_intro}.

\smallskip

\subsection{Related literature}
\label{ss:literature}

Let us mention some fundamental literature on large deviations in the context of fluid dynamics models. Many small-noise large deviation proofs, especially for SPDEs with multiplicative noise, rely on the weak convergence approach developed in \cite{budhidupuis01}. The latter method has been applied in \cite{sritharansundar06,duanmillet09,chueshovmillet10,RoZhZh,TV24} for  fluid models including the 2D Boussinesq equations, 2D Navier--Stokes equations, and 3D tamed Navier--Stokes equations. Also, large deviations for a class of SPDEs with transport noise were studied in \cite{GL24}. 

For the stochastic 3D primitive equations, small-noise large deviation principles have been proved in \cite{DZZ} for multiplicative, smoothening noise (see \cite[H0]{DZZ}) and in \cite{Slavik} for more general, gradient-dependent noise. In \cite{Slavik}, transport noise can be added to the vertical average of the horizontal velocity $\bar{v}= \int_{-h}^0 v(\cdot,\zeta)\dd \zeta$ (see \cite[Ex.\ 2, (2.29)]{BS21}), which is often referred to as the \emph{barotropic mode}. However, to the best of our knowledge, the LDP for the physically relevant case (see e.g.\ \cite[\S2]{AHHS25nonisothermal}) of full transport noise acting on the horizontal velocity itself has not yet been treated in the literature. 
The challenge in dealing with transport noise acting on the horizontal velocity is that the noise does not disappear when analyzing the evolution of the \emph{baroclinic mode} $\wt{v}=v-\bar{v}$, and this fact makes the various a priori estimates needed to show the LDP much more involved, cf.\ \cite{AHHS22,AHHS25nonisothermal}.

The next subsection is devoted to explaining the strategy and novelty of our work. 

\subsection{Strategy and novelties}
\label{ss:strategy}
Compared to the existing literature, the main mathematical difficulties in proving Theorem \ref{t:intro_3D_primitive} (and its extension Theorem \ref{th:PE LDP}) can be summarized as follows:
\begin{itemize}
\item Lack of coercivity.
\item Transport noise acting directly on the horizontal velocity $v$.
\end{itemize}

We begin by discussing the first point: \textbf{lack of coercivity}. In recent years, an extensive literature on LDPs for SPDEs in the so-called variational setting has been developed, see e.g.\   \cite{liu09,hongliliu21,kumarmohan22,pan24,TV24}. 
Details on the variational approach to SPDEs can be found in \cite{LR15,AV22variational}. 
In such an approach, one reformulates an SPDE like \eqref{eq:primitive_intro} into the abstract form:  
\begin{equation}
\label{eq:see_intro}
\left\{
\begin{aligned}
&\dd Y^\varepsilon + A(t,Y^\varepsilon)\dd t=  \sqrt{\varepsilon} \, B(t,Y^\varepsilon)\dd W\; ({+\text{ It\^o--Stratonovich  correction}}),\\
&Y^\varepsilon(0)=(v_0,\theta_0), 
\end{aligned}
\right.
\end{equation}
where $Y^\varepsilon=(v^\varepsilon,\theta^\varepsilon)$, and $A \col \R_+\times V\to V^*$ and $B\col \R_+\times V\to \mathcal{L}_2(\ell^2,H)$ represent the coefficients in the SPDE \eqref{eq:primitive_intro}, with $(V,H,V^*)$ a Gelfand triple  (in particular, $V\hookrightarrow H\hookrightarrow V^*$) for which the well-posedness of \eqref{eq:see_intro} with initial data in $H$ holds, at least locally in time.  
Before going further, we comment on the choice of spaces $H$ and $V$ needed for local well-posedness of stochastic 3D primitive equations. 
As one can see from a scaling argument \cite[\S1.2]{agresti23roughtransport}, the 3D primitive equations have the same scaling as the 3D Navier--Stokes equations, the \emph{energy setting} ($H=L^2\times L^2$, $V=H^1 \times H^1$) is supercritical, and therefore is not possible to prove well-posedness.
Starting from the above mentioned seminal work \cite{CT07}, for the 3D primitive equations, the \emph{strong setting} ($H=H^1\times H^1$, $V=H^2\times H^2$) is often employed, and it is (almost)  critical, see \cite[\S1.2]{agresti23roughtransport}. 

Now, for the strong setting, up to this point, the stochastic primitive equations could be reformulated in the abstract form \eqref{eq:see_intro} (see Section \ref{sec: Strat} and \cite[\S8]{AHHS22} for a reformulation of the Stratonovich noise as an It\^o noise). 
However, it is impossible to apply the aforementioned LDP literature on abstract variational settings in this case, as we will explain next. 

The problem is that,
up to the best of our knowledge, all the available LDP results for abstract SPDEs as in \eqref{eq:see_intro}, have until now  required variants of the following \emph{coercivity} condition: There exist $M,\nu>0$ such that, for all $Y\in V$,
\begin{equation}
\label{eq:coercivity}
\langle A(Y), Y \rangle + \frac{1}{2}\|B(Y)\|_{\mathcal{L}_2(\ell^2;H)}^2\leq - \nu\|Y\|_{V}^2+M( \|Y\|_H^2+1),
\end{equation} 
where $\langle \cdot ,\cdot \rangle$ denotes the duality pairing between  $V^*$ and $V$. 
The above condition readily implies  $L^2(\Om;L^\infty(0,T;H)\cap L^2(0,T;V))$-estimates for solutions to \eqref{eq:see_intro} (see e.g.\   \cite{AV22variational,LR15}), and these estimates can be used to prove global well-posedness for \eqref{eq:see_intro} in a rather straightforward way. 
Although the coercivity condition \eqref{eq:coercivity} is often valid for 2D fluid models (see e.g.\  \cite[\S7.3]{AV25survey} or \cite{FL23_book}), it fails for 3D models. In particular, the coercivity condition \eqref{eq:coercivity} fails for the 3D primitive equations if $H=H^1\times H^1$ and $V=H^2\times H^2$, i.e.\ in the strong setting. Indeed, the global well-posedness of the 3D primitive equations relies on subtle cancellations and cannot be captured by a crude condition like \eqref{eq:coercivity}, in the deterministic nor the stochastic case.  
Consequently, energy estimates  are more difficult to derive, and in contrast to the coercive case, for the 3D primitive equations 
one only has the following type energy estimates at disposal (cf.\ \cite[Th.\ 3.6, Rem.\ 3.10]{AHHS25nonisothermal}):
\begin{equation}
\label{eq:bound_Om_weak}
\P\Big(\sup_{t\in [0,T]}\|Y \|_{H}^2 + \int_{0}^T \|Y \|_{V}^2\dd t  >\gamma \Big) \lesssim_T \frac{1}{\log \log \g} \quad \text{ for }\  \g> \mathrm{e}. 
\end{equation}
The above is expected to be optimal for solutions of 3D primitive equations, and is far weaker than the energy estimate that one would have if the coercivity condition \eqref{eq:coercivity} would have held.  

\smallskip

One of the main contributions of the current work is to obtain an LDP for an SPDE which satisfies only a very weak bound with respect to the $\Om$-variable. 
This in particular illustrates that the coercivity condition \eqref{eq:coercivity} is not needed to obtain an LDP result, an LDP can be proved with only a weak control on the $\Om$-variable. 
Yet, in our case, deriving the LDP from the weak convergence approach \cite{budhidupuis01}  from scratch does require a very lengthy proof. Instead, we leverage an abstract LDP  result by the second author \cite{T25L2LDP}, which is exactly tailored to $L^2$-settings (similar to variational settings) with a lack of coercivity. 
This latter result enables us,  in this paper, to focus solely on the PDE estimates and probabilistic arguments \emph{specific} to the stochastic primitive equations.  
To apply  \cite{T25L2LDP}, it suffices to define $(A,B)$ such that \eqref{eq:see_intro} is equivalent to \eqref{eq:primitive_intro}--\eqref{eq:boundary_conditions_strong_weak_intro}, and then verify the following:
\begin{enumerate}[label=\emph{\roman*.},ref=\textup{\roman*.}]
  \item\label{1} some structural assumptions on $(A,B)$ (no monotonicity or coercivity needed),
  \item\label{2} well-posedness of \eqref{eq:primitive_intro}--\eqref{eq:boundary_conditions_strong_weak_intro},
  \item\label{3} a deterministic a priori estimate  for the skeleton equation \eqref{eq:primitive_intro skeleton},
  \item\label{4} a stochastic a priori estimate that only needs to hold \emph{in probability} for an SPDE associated to \eqref{eq:primitive_intro}--\eqref{eq:boundary_conditions_strong_weak_intro}. 
\end{enumerate}
In particular, this also reveals the core ingredients for a successful application of the weak convergence approach.  
Since \ref{1}\ and \ref{2}\ were already proved in \cite{AHHS22}, we can focus on the two last-mentioned a priori estimates in the current paper. 
This streamlines our LDP proof, leaving us to the main challenge: proving the a priori estimates of \ref{3}\ and \ref{4}\ for two equations associated to the stochastic primitive equations. 

This then brings us to the difficulty mentioned in the second bullet: \textbf{transport noise acting directly on the horizontal velocity $v$}. Since our transport noise acts on the full velocity $v$, we have to deal with the 3D domain $\O$, and the available Sobolev embeddings are much more restrictive than in the case of the 2D domain $\Tor^2$, see also Subsection \ref{ss:literature}.  

A challenge in proving \ref{3}  is the treatment of the  terms in \eqref{eq:primitive_intro skeleton} that contain $(\varphi_n)$. These terms represent a $B(v,\theta)\varphi$-term for $B$ from the abstract formulation \eqref{eq:see_intro}. Now, $B(v,\theta)\varphi$ only belongs to $L^1(0,T;H)$ instead of $L^2(0,T;V^*)$, thus more standard $L^2$-estimates cannot be used. In fact, even well-posedness of \eqref{eq:primitive_intro skeleton} is far from trivial in our setting with transport noise acting on $v$, but it  follows from  \cite{T25L2LDP} once we have proved \ref{1}\ and \ref{3}. 

Other difficulties arise when verifying the stochastic a priori estimate mentioned in \ref{4}. As for \eqref{eq:primitive_intro}--\eqref{eq:boundary_conditions_strong_weak_intro}, we can again only work with weak global bounds in probability, and we will rely on intricate  applications of a stochastic Gr\"onwall lemma.

\smallskip

\subsubsection*{Acknowledgement}
The authors thank Mark Veraar for helpful comments.

\section{Preliminaries, assumptions and main result}\label{sec:prelims}
 
Over the next subsections, we discuss the necessary preliminaries on the stochastic primitive equations and present our assumptions and   main results. We begin with the case of It\^o noise and discuss Stratonovich noise in Subsection \ref{sub:stratresult}. 

Throughout this paper, $(\beta_n)$ is a sequence of independent real-valued Brownian motion on a filtered probability space $(\Omega,\F,\P,(\F_t))$. 
Consider the following system of equations: 
\begin{equation}\label{eq:PE original}
\left\{
\begin{aligned}
&\dd v^\varepsilon + \Delta v^\varepsilon  \dd t = \big[ -\nabla_{\h} P^\varepsilon - (v^\varepsilon\cdot \nabla_\h )v^\varepsilon - w^\varepsilon \partial_3 v^\varepsilon 
+ \fv (\cdot,v^\eps,\theta^\eps,\nabla v^\eps)+\mathcal{P}_{\gamma} (\cdot,v^\eps) \big]\dd t\\
&\qquad\qquad\qquad\qquad\qquad\qquad\qquad\qquad+ \sqrt{\varepsilon}\, \textstyle{\sum_{n\geq 1}} \big[(\phi_n\cdot \nabla )v^\varepsilon -\nabla \wt{P}_n^\varepsilon+\gvn(\cdot,v^\eps)\big]  \dd \beta^n_t,\\
&\dd \theta^\varepsilon+ \Delta \theta^\varepsilon   \dd t = \big[- (v^\varepsilon\cdot \nabla_\h )\theta^\varepsilon - w^\varepsilon \partial_3 \theta^\varepsilon+ \ft(\cdot,v^\eps,\theta^\eps,\nabla v^\eps )  \big]\dd t\\
&\qquad\qquad\qquad\qquad\qquad\qquad\qquad\qquad
+ \sqrt{\varepsilon} \,\textstyle{\sum_{n\geq 1}}\big[ (\psi_n\cdot \nabla )\theta^\varepsilon+\gtn(\cdot,v^\eps,\theta^\eps,\nabla v^\eps)\big] \dd \beta^n_t,\\
&\nabla_\h \cdot v^\varepsilon + \partial_3 w^\varepsilon =0,\\
&\partial_3 P^\varepsilon+\kappa \theta^\varepsilon=0, \\ 
&\partial_3 \wt{P}_n^\varepsilon=0,\\
& v^\varepsilon(0)=v_0, \quad \theta^\varepsilon(0)=\theta_0,
\end{aligned}
\right. 
\end{equation}
on $\O=\Tor^2\times(-h,0)$, complemented with the same  boundary conditions \eqref{eq:boundary_conditions_strong_weak_intro} as in the introduction.  
We will reformulate  \eqref{eq:PE original} analogously to \cite[\S3]{AHHS22} and the references therein. 

\subsection{Notation}
Before we start, a few comments on our notation are in order. 
For $v=(v_1,v_2)\col\O\to \R^2$ we will write $\nabla v=(\partial_iv_j)_{i,j}=[\nabla v_1 \nabla v_2]\in\R^{3\times 2}$. 
We will   omit writing codomains $\R^n$ or $\R^{m\times n}$ in $L^p$-norms and $H^p$-norms when the dimensions are clear from the context.   
For example, we will write $\|v\|_{H^p(\O)}$, $\|\nabla v\|_{L^p(\O)}$ for $v$ as above. 
Similarly, we will sometimes write $\|\cdot\|_{\ell^2}$ instead of $\|\cdot\|_{\ell^2(\N;\R^2)}$ or $\|\cdot\|_{\ell^2(\N;\R^{3\times 2})}$, and in combined norms, we write  $L^p(\O;\ell^2)$ instead of $L^p(\O;\ell^2(\N;\R^2))$,    $L^2(0,t;L^p(\O))$ instead of $L^2(0,t;L^p(\O;\R^2))$  and $\ell^2(\N;L^2(\Tor^2))$ instead of $\ell^2(\N;L^2(\Tor^2;\R^{2\times 2}))$. 

We add a subscript $\h$ when we refer to the horizontal variables belonging to $\Tor^2$, e.g.\  we write 
\[
x=(x_{\h},x_3)\in\Tor^2\times (-h,0), \quad
\nabla_{\h}=(\partial_1,\partial_2), \quad \Delta_{\h}=\nabla_{\h}\cdot\nabla_{\h}. 
\]

We reserve angle brackets $\langle\cdot,\cdot\rangle$ for  the duality pairing between a Banach space $V$ and its dual space $V^*$, and we reserve round brackets $(\cdot,\cdot)_H$ for the inner product on a Hilbert space $H$. 

Lastly, we recall the identifications $\mathcal{L}_2(\ell^2;H^k(\O))\cong\ell^2(\N;H^k(\O))\cong H^k(\O;\ell^2)$ for $k\in\N_0$, which we use throughout this paper. 

\subsection{Preliminaries}\label{sub:prelim} 

First, let us introduce two linear operators that will be used in the reformulation. For a more detailed discussion we refer to \cite[\S2.1, \S3.1]{AHHS22}. 

For the temperature $\theta$, we will use an analytically weak setting and work with a  \emph{weak Robin Laplacian} $\DeltawR\col H^{1}(\O)\subseteq  H^{1}(\O)^* \to H^{1}(\O)^*$, defined by 
\begin{align}\label{eq:weak robin pairing}
&(\DeltawR\theta)(\varphi)\ceqq  -\int_{\O}\nabla \varphi\cdot \nabla \theta\,dx - \alpha\int_{\Tor^2} \varphi(\cdot,0) \theta(\cdot,0) \,dx_{\h}, \qquad \theta,\varphi\in H^1(\O).
\end{align} 
The above definition is derived from an integration by parts for  $\<\Delta\theta,\varphi\?=(\Delta\theta,\varphi)_{L^2}$  when $\theta$ is smooth and satisfies the  Robin boundary conditions stated for $\theta^\eps$ in \eqref{eq:boundary_conditions_strong_weak_intro}. 

For the velocity $v$, we will use the \emph{hydrostatic Helmholtz projection} $\hhp\col L^2(\O;\R^2)\to L^2(\O;\R^2)$  defined by 
\begin{equation}\label{eq:def hhp}
    \hhp[g]\ceqq g- \Q_{\h}\Big[\frac{1}{h}\int_{-h}^0 g(\cdot,\zeta)\dd \zeta\Big], \qquad g\in L^2(\O;\R^2).
\end{equation}
Here, $\Q_{\h}\col L^2(\Tor^2;\R^2)\to L^2(\Tor^2;\R^2)$ is given by $\Q_{\h}f\ceqq \nabla_{\h} \Psi_{f}$ with $\Psi_{f}\in H^1(\Tor^2)$ the unique solution to $
\Delta_{\h}\Psi_f=\nabla_{\h}\cdot f$ and ${\int_{\Tor^2}}\Psi_f\dd x_{\h}=0$. 
Now, $\hhp$ is an orthogonal projection onto the following subspace of $L^2(\O;\R^2)$: 
\[
\mathbb{L}^2(\O)\ceqq  \{f\in L^2(\O;\R^2):\nabla_{\h}\cdot \int_{-h}^0 f(\cdot,\zeta)\dd \zeta=0 \text{ on }\Tor^2\} =\mathrm{Ran}(\hhp).
\]
The latter will be equipped with the norm inherited from $L^2(\O;\R^2)$: $\|\cdot\|_{\mathbb{L}^2(\O)}\ceqq \|\cdot\|_{L^2(\O;\R^2)}$. In addition, we will be using the following subspaces of $H^k(\O;\R^2)$ for $k\in\N$:
\[
\mathbb{H}^k(\O)\ceqq H^k(\O;\R^2)\cap \mathbb{L}^2(\O), \qquad \|\cdot\|_{\mathbb{H}^k(\O)}\ceqq \|\cdot\|_{{H}^k(\O)},
\]
and for $k=2$, we consider the further subspace
\begin{equation}
\label{eq:def_H_2_N_strong_weak}
\Hs_{\n}^2(\O)\ceqq \big\{v \in \Hs^{2}(\O)\,:\, \pz  v(\cdot,-h)=\pz v(\cdot,0)=0 \text{ on }\Tor^2\big\}.
\end{equation}
The following property of the hydrostatic Helmholtz projection will be used (see \cite[(4.18)]{AHHS22}):  
\begin{equation}\label{eq: grad hhp}
   \|\partial_j\hhp f\|_{L^2(\O;\R^2)}\leq   \|\partial_j f\|_{L^2(\O;\R^2)},\qquad\quad f\in H^1(\O; \R^2),\, j\in\{1,2,3\}.
\end{equation} 

Finally, we use the following definition of large deviation principle (LDP). 

\begin{definition}\label{def: LDP}
Let $\mathcal{E}$ be a Polish space, let $(\Om,\F,\P)$ be a probability space and let $(Y^\eps)_{\eps\in(0,\eps_0)}$ be a collection of $\mathcal{E}$-valued random variables on $(\Om,\F,\P)$, for some $\eps_0>0$. Let $I\colon \mathcal{E}\to[0,\infty]$ be a function. Then $(Y^{\eps})$ \emph{satisfies the large deviation principle (LDP) on $\mathcal{E}$} with rate function $I\colon \mathcal{E}\to[0,\infty]$ if
\begin{enumerate}[label=\emph{(\roman*)},ref=\textup{(\roman*)}]
    \item $I$ has compact sublevel sets,
    \item for all open $E\subset \mathcal{E}$: $\liminf_{\eps\downarrow0}\eps\log \P(Y^{\eps}\in E)\geq -\inf_{z\in E}I(z)$,
    \item for all closed $E\subset \mathcal{E}$: $\limsup_{\eps\downarrow0}\eps\log \P(Y^{\eps}\in E)\leq -\inf_{z\in E}I(z)$.
\end{enumerate} 
\end{definition}

\subsection{Primitive equations reformulation and solution notion}

For the reformulation of 
the small-noise primitive equations \eqref{eq:PE original}, we start by noting that \eqref{eq:PE original} can be equivalently written in terms of the unknown $v=(v^k)_{k=1}^2 \col [0,\infty)\times \Om\times \O\to \R^2$ which contains only the first two components of the unknown velocity field $u$. To do so, we use that the divergence-free condition and boundary conditions are equivalent to setting $w=w(v)$, where 
\begin{equation}\label{eq:def_w_v}
	\big(w(v)\big)(t,x)\ceqq -\int_{-h}^{x_3}\nabla_{\h}\cdot v(t,x_{\h},\zeta)\dd\zeta, \qquad t\in \R_+, x=(x_{\h},x_3)\in\Tor^2\times (-h,0) =\O,
\end{equation}
and imposing that a.s.:  
\[
\int_{-h}^{0}\nabla_{\h}\cdot v (t,x_{\h},\zeta)\dd\zeta=0 \quad \text{for all $t\in \R_+$ and $x_{\h}\in \Tor^2$.}
\]
Then, we apply the hydrostatic Helmholtz
projection $\hhp$ (see Subsection \ref{sub:prelim} and \cite[(3.1)-(3.2)]{AHHS22}), so that the small-noise stochastic primitive equations \eqref{eq:PE original} become equivalent to 
\begin{equation}
\label{eq:primitive_weak_strong}
\begin{cases}
\dd v^\eps -\Delta v^\eps  \dd t=\hhp\big[ -(v^\eps\cdot \nabla_{\h}) v^\eps- w(v^\eps)\pz  v^\eps+\mathcal{P}_{\gamma} (\cdot,v^\eps) \\
\qquad\qquad \qquad \qquad\quad  
+\nabla_{\h}\int_{-h}^{x_3}  \kappa(\cdot,\zeta)\theta^\eps(\cdot,\zeta) \dd\zeta + \fv (\cdot,v^\eps,\theta^\eps,\nabla v^\eps) \big]\dd t \\ 
 \qquad \qquad\qquad \qquad \qquad \qquad 
+\sqrt{\eps}\sum_{n\geq 1}\hhp \big[(\phi_{n}\cdot\nabla) v^\eps  +\gvn(\cdot,v^\eps)\big] \dd\beta_t^n, \\
\dd \theta^\eps -\DeltawR \theta^\eps \dd t=\big[ -(v\cdot\nabla_{\h}) \theta^\eps -w(v^\eps) \pz  \theta^\eps+ \ft(\cdot,v^\eps,\theta^\eps,\nabla v^\eps ) \big]\dd t\\
 \qquad \qquad\qquad \qquad \qquad +\sqrt{\eps}\sum_{n\geq 1}\big[(\psi_n\cdot \nabla) \theta^\eps+\gtn(\cdot,v^\eps,\theta^\eps,\nabla v^\eps)\big] \dd\beta_t^n, 
\\
v^\eps(\cdot,0)=v_0,\quad \theta^\eps(\cdot,0)=\theta_0, 
\end{cases}
\end{equation}
on $\O=\Tor^2\times(-h,0)$, complemented with the following boundary conditions: 
\begin{equation}
\label{eq:boundary_conditions_strong_weak}
\begin{aligned}
\pz  v^\eps (\cdot,-h)=\pz  v^\eps(\cdot,0)=0 \ \  \text{ on }\Tor^2,&\\
\pz  \theta^\eps(\cdot,-h)= \pz  \theta^\eps(\cdot,0)+\alpha \theta^\eps(\cdot,0)=0\ \  \text{ on }\Tor^2.&
\end{aligned}
\end{equation}
Here $\alpha\in \R$ is given, $w(v^\eps)$ is as in \eqref{eq:def_w_v} and a.s.\ for all $t\in \R_+$,
\begin{align}\label{eq:def_P_gamma}
\mathcal{P}_{\gamma} (t,v)
&\ceqq  \Big(\sum_{n\geq 1} \sum_{m=1}^2 \gamma_n^{\ell,m} (t,x)
\big(\Q [(\phi_n\cdot\nabla) v + \gvn(\cdot,v)]\big)^m\Big)_{\ell=1}^2,
\end{align}
where $\big(\Q [\cdot]\big)^m$ denotes the $m$-th component of the vector $\Q [f]$.

Note that $(\beta^n)_{n\geq 1}$ induces a unique $\ell^2$-cylindrical Brownian motion  $\Br_{\ell^2}$ through
\begin{align}\label{eq:cyl}
\Br_{\ell^2}(f)\ceqq \sum_{n\geq 1} \int_{\R_+} \langle f(t), e_n\rangle \dd\beta^n_t, \ \hbox{ with } \ e_n=(\delta_{kn})_{k}, \,  f\in L^2(\R_+;\ell^2).
\end{align}

The following assumption corresponds to \cite[Ass.\ 3.1]{AHHS22} and will be needed for local well-posedness.

\begin{assumption} There exist $M,\delta>0$ for which the following hold.
\label{ass:well_posedness_primitive}
\begin{enumerate}[label=\textup{(\arabic*)},ref=\ref{ass:well_posedness_primitive}\textup{(\arabic*)}] 
\item\label{it:well_posedness_measurability} For all $n\geq 1$ and $j\in \{1,2,3\}$, the maps $$\phi_n^j,\psi_n^j,\kappa: \R_+\times \O\to \R$$ are Borel measurable;
\item\label{it:well_posedness_primitive_phi_smoothness} for all $t\in \R_+$, $j,k\in \{1,2,3\}$ and $\ell,m\in \{1,2\}$,
\begin{align*}
\Big\|\Big(\sum_{n\geq 1}| \phi^j_n(t,\cdot)|^2\Big)^{1/2} \Big\|_{L^{3+\delta}(\O)}+
\Big\|\Big(\sum_{n\geq 1}|\partial_k \phi^j_n(t,\cdot)|^2\Big)^{1/2} \Big\|_{L^{3+\delta}(\O)} &\leq M,\\
\Big\|\Big(\sum_{n\geq 1}|\gamma^{\ell,m}_n(t,\cdot)|^2\Big)^{1/2}\Big\|_{L^{3+\delta}(\O)}&\leq M;
\end{align*}
\item\label{it:well_posedness_primitive_L_infty_bound}  
for all $t\in \R_+$, $x\in \O$ and $j\in \{1,2,3\}$,
\begin{align*}
\Big(\sum_{n\geq 1} | \psi^j_n(t,x) |^2\Big)^{1/2} \leq M;
\end{align*}
\item\label{it:well_posedness_primitive_kone_smoothness}
for all $t\in \R_+$, $x_{\h}\in \Tor^2$, $j\in \{1,2,3\}$ and $i\in \{1,2\}$,
$$
\| \kappa(t,x_{\h},\cdot) \|_{L^2(-h,0)} +\|\partial_i \kappa(t,\cdot) \|_{L^{2+\delta}(\Tor^2;L^2(-h,0))} \leq M;
$$
\item\label{it:well_posedness_primitive_parabolicity} there exist $\nu\in (0,2)$ such that for all $t\in \R_+$, $x\in \O$ and $\xi\in \R^3$,
\begin{align*}
\sum_{n\geq 1} \Big(\sum_{j=1}^3 \phi^j_n(t,x) \xi_j\Big)^2\leq \nu |\xi|^2,
\ \ \text{ and }\ \ 
\sum_{n\geq 1} \Big(\sum_{j=1}^3 \psi^j_n(t,x) \xi_j\Big)^2 \leq \nu |\xi|^2;
\end{align*}
\item\label{it:nonlinearities_measurability}
for all $n\geq 1$, the maps 
\begin{align*}
&\fv\colon \R_+\times \O\times \R^2\times\R\times \R^6\to \R^2, \quad
\ft: \R_+\times \O\times \R^2\times\R\times \R^6\to \R,\\
&\gvn\colon\R_+\times \O\times\R^2\to \R^2, \quad\hbox{and} \quad
\gtn\colon\R_+\times \O\times \R^2\times\R\times \R^6\to \R
\end{align*} 
are  Borel measurable;

\item\label{it:nonlinearities_strong_weak} 
for all $T\in (0,\infty)$ and $i\in \{1,2\}$, 
\begin{align*}
\fv^i (\cdot,0),\ft(\cdot,0)&\in L^2((0,T)\times \O), \\
(\gvn^i(\cdot,0))_{n\geq 1}&\in  L^2((0,T);H^1(\O; \ell^2)) \hbox{ and } \\
 (\gtn(\cdot,0))_{n\geq 1}&\in L^2((0,T)\times \O; \ell^2).
\end{align*}
Moreover, for all $n\geq 1$, $t\in \R_+$,  $x\in \O$, $y,y'\in \R^2$, $Y,Y'\in \R^6$ and $z,z'\in \R$,
\begin{align*}
&|\fv(t,x,y,z,Y)-\fv(t,x,y',z',Y')|+
|\ft(t,x,y,z,Y)-\ft(t,x,y',z',Y')|\\
&
+
\|(\gtn(t,x,y,z,Y)-\gtn(t,x,y',z',Y'))_{n}\|_{\ell^2}\\
&\qquad\qquad
\lesssim (1+|y|^4+ |y'|^4)|y-y'|+
(1+|z|^{2/3}+|z'|^{2/3})|z-z'|\\
&\qquad\qquad
+(1+|Y|^{2/3}+|Y'|^{2/3})|Y-Y'|.
\end{align*}
Finally, for all $t\in\R_+$, $\O\times \R^2\ni (x,y)\mapsto \gvn(t,x,y)$ is continuously differentiable and for all $k\in\{0,1\}$, $j\in \{1,2,3\}$, $x\in \O$, and $y,y'\in \R^2$ 
\begin{align*}
&\|(\partial_{x_j}^{k}\gvn(t,x,y)-\partial_{x_j}^{k}\gvn(t,x,y'))_{n}\|_{\ell^2}
\lesssim (1+|y|^4+ |y'|^4)|y-y'|,\\
&\|(\partial_{y}\gvn(t,x,y)-\partial_{y}\gvn(t,x,y'))_{n}\|_{\ell^2}\lesssim (1+|y|^2+|y'|^2)|y-y'|.
\end{align*}
\end{enumerate}
\end{assumption}

\begin{remark}\label{rem:assumptionslocal} 
 In Assumption  \ref{it:well_posedness_primitive_phi_smoothness}, the derivatives are defined in the distributional sense. 
Together with the Sobolev embedding $H^{1,3+\delta}(\O;\ell^2)\into C^{\alpha}(\O;\ell^2)$ with  
$\alpha=\frac{\delta}{3+\delta} \in (0,1)$, it gives for  $j\in\{1,2,3\}$:
\begin{equation*}
\|(\phi^j_n(t,\cdot))_{n}\|_{L^\infty(\O;\ell^2)}\leq\|(\phi^j_n(t,\cdot))_{n}\|_{C^{\alpha}(\O;\ell^2)} \lesssim_{\delta} M, \quad
\text{ a.s.\ for all }t\in \R_+.
\end{equation*}

Furthermore, Assumption \ref{it:well_posedness_primitive_phi_smoothness} fits the scaling of the Kraichnan's noise, as discussed in  \cite[Introduction, Rem.\ 3.2]{AHHS22}, 
noting that $H^{3/2+\gamma}(\O)\into H^{1,3+\delta}(\O)$    for  $\gamma\in (0,1)$ and $\delta=\frac{3\gamma}{1-\gamma}$.
\end{remark}

Let us now also introduce the deterministic \emph{skeleton equation} for the stochastic primitive equations, which for $\varphi=(\varphi_n)_{n\geq 1}\in L^2(0,T;\ell^2)$ is defined by
\begin{align}\label{eq:primitive_skeleton}
\begin{cases}
v' -\Delta v =\hhp\big[ -(v\cdot \nabla_{\h}) v- w(v)\pz  v+\mathcal{P}_{\gamma} (\cdot,v) \\
\qquad\qquad \qquad \qquad\quad  
+\nabla_{\h}\int_{-h}^{x_3}  \kappa(\cdot,\zeta)\theta(\cdot,\zeta) \dd\zeta + \fv (\cdot,v,\theta,\nabla v) \big] \\ 
 \qquad \qquad\qquad \qquad \qquad \qquad 
+\sum_{n\geq 1}\hhp \big[(\phi_{n}\cdot\nabla) v  +\gvn(\cdot,v)\big]\varphi_n, \\
\theta' -\DeltawR \theta =\big[ -(v\cdot\nabla_{\h}) \theta -w(v) \pz  \theta+ \ft(\cdot,v,\theta,\nabla v ) \big] \\
 \qquad \qquad\qquad \qquad \qquad +\sum_{n\geq 1}\big[(\psi_n\cdot \nabla) \theta+\gtn(\cdot,v,\theta,\nabla v)\big]\varphi_n  , 
\\
v(\cdot,0)=v_0,\quad \theta(\cdot,0)=\theta_0, 
\end{cases}
\end{align}
and is equipped with the same boundary conditions \eqref{eq:boundary_conditions_strong_weak} as the stochastic primitive equations. 

For \eqref{eq:primitive_weak_strong}--\eqref{eq:boundary_conditions_strong_weak} and \eqref{eq:primitive_skeleton},\eqref{eq:boundary_conditions_strong_weak}, we use the strong-weak setting and define solutions as follows. 
 
\begin{definition}[$L^2$-strong-weak solutions] 
\label{def:sol_strong_weak}
Let Assumption \ref{ass:well_posedness_primitive} be satisfied and let $(v_0,\theta_0)\in \Hs^1(\O)\times L^2(\O)$. Let $T>0$. 
\begin{enumerate}[label=\textit{(\arabic*)},ref=\textit{(\arabic*)}]
\item\label{it:defsol1} 
We say that $(v,\theta)$ is 
an \emph{$L^2$-strong-weak solution} to \eqref{eq:primitive_weak_strong}--\eqref{eq:boundary_conditions_strong_weak} on $[0,T]$ if $v^\eps\col [0,T]\times \Omega\to \Hs_{\n}^2(\O)$ and $\theta^\eps\col [0,T]\times \Omega\to H^1(\O)$ are progressively measurable stochastic processes such that $(v^\eps,\theta^\eps)\in L^2(0, {T};\Hs_{\n}^2(\O)\times H^1(\O))$ a.s., 
and  
\begin{itemize}
\item a.s.:   
\begin{align}\label{eq:integrability_strong_weak}
\begin{split}
(v^\eps\cdot \nabla_{\h}) v^\eps+ w(v^\eps)\pz  v +\fv (v^\eps,\theta^\eps,\nabla v^\eps)+\mathcal{P}_{\gamma}(\cdot,v^\eps)&\in L^2(0, {T};L^2(\O;\R^2)),\\
-\nabla_{\H}{\cdot}(v^\eps \theta^\eps) -\partial_{3}(w(v^\eps) \theta)&\in L^2(0, {T};(H^1(\O))^*),\\
 \ft(v^\eps,\theta^\eps,\nabla v^\eps ) &\in L^2(0, {T};L^2(\O)),\\
(\gvn(v^\eps))_{n\geq 1 }&\in L^2(0, {T};H^1(\O;\ell^2(\N;\R^2))),\\
(\gtn(v^\eps,\theta^\eps,\nabla v^\eps))_{n\geq 1 }&\in L^2(0,{T};L^2(\O;\ell^2)),
\end{split}
\end{align} 
\item a.s.: the following identities hold  in $\mathbb{L}^2(\O)\times H^{-1}(\O)$ for all $t\in [0, T]$:  
\begin{align*} 
v^\eps(t)-v_0
&=\int_0^t\Big( \Delta v^\eps(s)+ \hhp\Big[\nabla_{\h}\int_{-h}^{x_3} \kappa(\cdot,\zeta)\theta^\eps(\cdot,\zeta) \dd\zeta\\
&\qquad
- (v^\eps\cdot \nabla_{\h}) v^\eps- w(v^\eps)\pz  v^\eps + \fv (v^\eps,\theta^\eps,\nabla v^\eps) +\mathcal{P}_{\gamma}(\cdot,v^\eps)\Big]\Big)\dd s\\
&\qquad
+\sqrt{\eps}\int_0^t \big(\hhp[ (\phi_{n}\cdot\nabla) v^\eps   +\gvn(v^\eps)] \big)_{n}\dd\Br_{\ell^2}(s),\\
\theta^\eps(t)-\theta_0
&=
\int_0^t  \big[\Dr \theta^\eps-\nabla_{\h}{\cdot}(v^\eps \theta^\eps) -\pz  (w(v^\eps) \theta^\eps)+ \ft(v^\eps,\theta^\eps,\nabla v^\eps )\big]\dd s\\
&+\sqrt{\eps}
\int_0^t \big( (\psi_{n}\cdot\nabla) \theta^\eps   +\gtn(v^\eps,\theta^\eps,\nabla v^\eps) \big)_{n}\dd \Br_{\ell^2}(s).
\end{align*}
\end{itemize}
\item Analogously, for the skeleton equation, we say that $(v,\theta)$ is  
an \emph{$L^2$-strong-weak solution} to \eqref{eq:primitive_skeleton}, \eqref{eq:boundary_conditions_strong_weak} on $[0,T]$ if  $v\col [0,T] \to \Hs_{\n}^2(\O)$ and $\theta\col [0,T] \to H^1(\O)$ are strongly measurable, 
  $(v,\theta)\in L^2(0,  {T};\Hs_{\n}^2(\O)\times H^1(\O))$ and \eqref{eq:integrability_strong_weak} holds and \eqref{eq:primitive_skeleton} holds in integrated form as an identity in $\mathbb{L}^2(\O)\times H^{-1}(\O)$. 
\end{enumerate}
 \end{definition}

For global well-posedness, we also need the following assumption, coinciding with \cite[Ass.\ 3.5]{AHHS22}.
 
 \begin{assumption}\label{ass:global_primitive}  
Assumption \ref{ass:well_posedness_primitive} holds and in addition:
\begin{enumerate}[label=\textup{(\arabic*)},ref=\ref{ass:global_primitive}\textup{(\arabic*)}]
\item\label{it:independence_z_variable} for all $n\geq 1$, $x=(x_{\h},x_3)\in \Tor^2\times (-h,0)=\O$, $t\in \R_+$, $j,k\in \{1,2\}$ and  
\begin{align*}
\phi_n^j(t,x)\text{ and }\gamma^{j,k}_n(t,x) \text{ are independent of $x_3$};
\end{align*} 
\item\label{it:sublinearity_Gforce}
there exist $C>0$ and $\Xi\in L^2_{\loc}(\R_+;L^2(\O))$ such that for all $t\in \R_+$, $j\in \{1,2,3\}$, $x\in \O$, $y\in \R^2$, $z\in \R$ and $Y\in \R^{6}$, 
\begin{align*}
|\fv(t,x,y,z,Y)|
&\leq C(\y(t,x)+|y|+|z|+|Y|),\\
|\ft(t,x,y,z,Y)|
&\leq C(\y(t,x)+|y|+|z|+|Y|),\\
\|(\gvn (t,x,y))_{n}\|_{\ell^2}
+\|(\partial_{x_j}\gvn (t,x,y))_{n}\|_{\ell^2}&\leq C(\y(t,x) + |y|), \\
\|(\partial_{y}\gvn(t,x,y))_{n}\|_{\ell^2}
&\leq C, \\
\|(\gtn(t,x,y,z,Y))_{n}\|_{\ell^2}
&\leq C(\y(t,x)+|y|+|z|+|Y|).
\end{align*}
\end{enumerate}
\end{assumption}

The well-posedness result below follows from the global well-posedness proved in \cite[Th.\ 3.6]{AHHS22}. 
\begin{theorem}\label{th:SPDE well posed}
Let Assumptions \ref{ass:well_posedness_primitive} and \ref{ass:global_primitive} hold, let $(v_0,\theta_0)\in\H^1(\O)\times L^2(\O)$ and let $\eps\in[0,1]$. Then for every $T>0$, the \eqref{eq:primitive_weak_strong}--\eqref{eq:boundary_conditions_strong_weak} has a unique $L^2$-strong-weak solution on $[0,T]$. 
\end{theorem}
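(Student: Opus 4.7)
The plan is to reduce the statement directly to the global well-posedness result \cite[Th.\ 3.6]{AHHS22} by absorbing the factor $\sqrt{\eps}$ into the coefficients. Define the rescaled objects
\[
\wt{\phi}_n\ceqq \sqrt{\eps}\,\phi_n,\quad \wt{\psi}_n\ceqq \sqrt{\eps}\,\psi_n, \quad \wt{g}_{v,n}\ceqq \sqrt{\eps}\,\gvn, \quad \wt{g}_{\theta,n}\ceqq\sqrt{\eps}\,\gtn,
\]
and leave $\kappa$, $\fv$, $\ft$, and $\mathcal{P}_{\gamma}$ untouched. With these choices, the small-noise system \eqref{eq:primitive_weak_strong}--\eqref{eq:boundary_conditions_strong_weak} becomes formally identical to the unit-intensity ($\eps=1$) stochastic primitive equations treated in \cite[\S3]{AHHS22}. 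Hence, it suffices to verify that the rescaled coefficients still satisfy Assumptions \ref{ass:well_posedness_primitive} and \ref{ass:global_primitive}, and then invoke \cite[Th.\ 3.6]{AHHS22}.

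The verification is straightforward and this is the main (and essentially the only) computation in the proof. Since $\sqrt{\eps}\leq 1$, every size condition carries over with the same constant: the $L^{3+\delta}(\O;\ell^2)$-bound in \ref{it:well_posedness_primitive_phi_smoothness}, the pointwise $\ell^2$-bound in \ref{it:well_posedness_primitive_L_infty_bound}, and the sublinear growth bounds on $\wt{g}_{v,n}$, $\wt{g}_{\theta,n}$ and their derivatives in \ref{it:sublinearity_Gforce} all hold with the same $M$ and $C$. In the parabolicity bound \ref{it:well_posedness_primitive_parabolicity}, the constant $\nu$ gets multiplied by $\eps\leq 1$, which is obviously still admissible. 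The structural items—Borel measurability \ref{it:well_posedness_measurability}, \ref{it:nonlinearities_measurability}; the Lipschitz and $C^1$-bounds in \ref{it:nonlinearities_strong_weak}; and the $x_3$-independence in \ref{it:independence_z_variable}—are preserved trivially by multiplication by a scalar. The terms $\kappa$ and $\mathcal{P}_\gamma$ are unchanged, so the conditions involving them (notably \ref{it:well_posedness_primitive_phi_smoothness}, \ref{it:well_posedness_primitive_kone_smoothness}, \ref{it:nonlinearities_strong_weak}) pass through unaltered. Thus all hypotheses of \cite[Th.\ 3.6]{AHHS22} are met, and that theorem delivers the unique $L^2$-strong-weak solution on $[0,T]$ in the sense of Definition \ref{def:sol_strong_weak}\ref{it:defsol1}.

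For the endpoint $\eps=0$, the stochastic integrals in \eqref{eq:primitive_weak_strong} vanish and one is left with a deterministic PDE. This fits exactly into the same abstract framework: take $\wt{\phi}_n=\wt{\psi}_n=\wt{g}_{v,n}=\wt{g}_{\theta,n}\equiv 0$, in which case Assumptions \ref{ass:well_posedness_primitive} and \ref{ass:global_primitive} are trivially satisfied (with $\nu$ chosen arbitrarily small in \ref{it:well_posedness_primitive_parabolicity}). A further application of \cite[Th.\ 3.6]{AHHS22} yields the required unique global solution, where now all stochastic integrals are a.s.\ zero and the process is deterministic. This completes the proof.

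The expected main obstacle is essentially bookkeeping rather than analysis: one must check that the rescaling by $\sqrt{\eps}$ does not enlarge any of the constants beyond what \cite[Th.\ 3.6]{AHHS22} permits, and that the deterministic terms $\mathcal{P}_\gamma$, $\kappa$, $\fv$, $\ft$—which are \emph{not} rescaled—still satisfy the assumptions made on them. Since all the relevant assumptions are homogeneous in the noise coefficients in a scale-preserving way for $\eps\in[0,1]$, there is no obstruction, and the statement follows as a clean corollary of the existing theory.
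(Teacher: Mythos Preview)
Your proposal is correct and follows essentially the same approach as the paper: absorb $\sqrt{\eps}$ into the noise coefficients, observe that the rescaled data still satisfy \cite[Ass.\ 3.1, Ass.\ 3.5]{AHHS22} (with $\eps\leq 1$ needed precisely for the parabolicity bound), and invoke \cite[Th.\ 3.6]{AHHS22}. One small point of care, implicit in both your argument and the paper's: since $\mathcal{P}_\gamma$ is built from $\gamma_n$, $\phi_n$, and $G_{v,n}$, keeping it ``untouched'' while rescaling $\phi_n$ and $G_{v,n}$ forces the compensating choice $\wt{\gamma}_n=\gamma_n/\sqrt{\eps}$ for $\eps>0$, which is harmless for fixed $\eps$ but worth stating explicitly.
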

\begin{proof}
  The result  follows from \cite[Th.\ 3.6]{AHHS22}, using that for $\eps\in[0,1]$, the rescaled functions  $(\sqrt{\eps}\phi_n,\sqrt{\eps}G_{v,n},\sqrt{\eps}\psi_n,\sqrt{\eps}G_{\theta,n})$ also satisfy the conditions for $(\phi_n,G_{v,n},\psi_n,G_{\theta,n})$ in \cite[Ass.\ 3.1, Ass.\ 3.5]{AHHS22}, thus the rescaled pair of coefficients $(A,\sqrt{\eps} B)$ satisfies \cite[Ass.\ 3.1, Ass.\ 3.5]{AHHS22}. The condition $\eps\leq 1$ is used to guarantee the parabolicity \cite[Ass.\ 3.1(3)]{AHHS22}.
\end{proof}

Our main result for the case of It\^o noise in as follows. For the notion of a large deviation principle, see Definition \ref{def: LDP}.

\begin{theorem}
    \label{th:PE LDP} 
    Let Assumptions \ref{ass:well_posedness_primitive} and \ref{ass:global_primitive} hold, let $(v_0,\theta_0)\in \H^1(\O)\times L^2(\O)$ and let $T>0$. For $\eps\in(0,1]$, let $(v^\eps,\theta^\eps)$ be the $L^2$-strong-weak solution to \eqref{eq:primitive_weak_strong}--\eqref{eq:boundary_conditions_strong_weak}. Then $((v^\eps,\theta^\eps))$ satisfies the large deviation principle on $L^2(0,T;\H_{\n}^2(\O)\times H^1(\O))\cap C([0,T];\H^1(\O)\times L^2(\O))$ with rate function  $I\colon L^2(0,T;\H_{\n}^2(\O)\times H^1(\O))\cap C([0,T];\H^1(\O)\times L^2(\O))\to [0,+\infty]$ given by
\begin{equation*}
I(V,\Theta)=\frac{1}{2}\inf\Big\{\textstyle{\int_0^T}\|\varphi(s)\|_{\ell^2}^2\dd s : \varphi\in L^2(0,T;\ell^2), \,(V,\Theta)=(V^{\varphi},\Theta^{\varphi})\Big\},
\end{equation*} 
where $\inf\varnothing\coloneqq +\infty$ and $(V^{\varphi},\Theta^{\varphi})$ is defined as the $L^2$-strong-weak solution $(v,\theta)$ to the skeleton equation \eqref{eq:primitive_skeleton}. 
\end{theorem}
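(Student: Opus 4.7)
The plan is to invoke the abstract LDP of \cite{T25L2LDP}, which is designed precisely for $L^2$-variational settings lacking coercivity. First, cast \eqref{eq:primitive_weak_strong}--\eqref{eq:boundary_conditions_strong_weak} into the abstract form \eqref{eq:see_intro} on the Gelfand triple $V\ceqq \Hs_{\n}^2(\O)\times H^1(\O)\into H\ceqq \Hs^1(\O)\times L^2(\O)\into V^*$, reading off $A\col V\to V^*$ and $B\col V\to \mathcal{L}_2(\ell^2;H)$ from the right-hand side of \eqref{eq:primitive_weak_strong}. Among the four inputs the abstract theorem asks for, the structural conditions on $(A,B)$ and the global well-posedness of the SPDE are essentially the content of \cite{AHHS22} and of Theorem \ref{th:SPDE well posed}. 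What remains is (a) a deterministic a priori estimate for the skeleton equation \eqref{eq:primitive_skeleton}, and (b) a stochastic a priori estimate, only in probability, for \eqref{eq:primitive_weak_strong}--\eqref{eq:boundary_conditions_strong_weak}, uniform in $\eps\in(0,1]$.

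For the deterministic estimate (a), I would follow the Cao--Titi strategy as implemented for the primitive equations in \cite{CT07,AHHS22,AHHS25nonisothermal}, splitting $v=\bar v+\wt v$ into the barotropic mode $\bar v=h^{-1}\int_{-h}^0 v\dd\zeta$ and the baroclinic mode $\wt v$. Testing the $\bar v$-equation against $-\Delta_\h \bar v$, the $\wt v$-equation against $-\Delta \wt v+|\wt v|^2\wt v$, and the $\theta$-equation against $\theta$, one obtains a closed energy inequality once the quadratic term in $w(v)$ and the cubic term in $\wt v$ are handled by the standard anisotropic Sobolev and interpolation estimates. The new ingredient relative to the purely deterministic case is the drift term $\hhp[(\phi_n\cdot\nabla)v]\varphi_n$ (and its $\theta$-analogue) from \eqref{eq:primitive_skeleton}; this term lies only in $L^1(0,T;H)$, but an integration by parts together with the pointwise bound \ref{it:well_posedness_primitive_parabolicity} and Assumption \ref{ass:global_primitive} gives
\[
\bigl|\bigl(B(v,\theta)\varphi,(v,\theta)\bigr)_H\bigr|\lesssim \|\varphi\|_{\ell^2}\bigl(1+\|(v,\theta)\|_H\bigr)\|(v,\theta)\|_V.
\]
Young's inequality absorbs the $V$-factor into the dissipation and a deterministic Gr\"onwall argument closes the bound in terms of $\|(v_0,\theta_0)\|_H$ and $\|\varphi\|_{L^2(0,T;\ell^2)}$.

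For the stochastic estimate (b), I would apply It\^o's formula to the same pseudo-energy on the splitting $v^\eps=\bar v^\eps+\wt v^\eps$. The quadratic variation of the transport noise again contributes a gradient-squared term, which parabolicity lets us absorb, while the It\^o correction $\mathcal{P}_\gamma$ produces only lower-order contributions compatible with Assumption \ref{ass:global_primitive}. The subtlety is that we cannot take expectations of the resulting energy identity, since there is no coercivity to provide uniform moments on $\Om$; only the weak logarithmic bound \eqref{eq:bound_Om_weak} is available. I would therefore localize by a stopping time at level $R$ on $\|(v^\eps,\theta^\eps)\|_{L^\infty_tH\cap L^2_tV}$, close the pathwise estimate via a stochastic Gr\"onwall lemma combined with a Burkholder--Davis--Gundy control of the martingale part, and finally remove the localization using \eqref{eq:bound_Om_weak} to obtain the in-probability bound required by \cite{T25L2LDP}.

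The main obstacle is that the transport noise acts on the \emph{full} horizontal velocity $v$, so the baroclinic equation for $\wt v$ still carries a noise term. Consequently the cubic test function $|\wt v|^2\wt v$, which is essential for the nonlinear closure, has to be balanced against a noise-induced quadratic-variation contribution at the same order, and the It\^o--Stratonovich correction $\mathcal{P}_\gamma$ mixes $\bar v$ and $\wt v$ at top order. Controlling these cross terms while relying only on the probabilistic bound \eqref{eq:bound_Om_weak} rather than on a finite-moment estimate is what makes both (a) and (b) delicate; the calculation leverages the same cancellations that underlie \cite[Th.\ 3.6]{AHHS22} but must be re-examined separately in the deterministic $\varphi$-driven skeleton setting and in the localized stochastic setting.
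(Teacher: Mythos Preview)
Your overall strategy matches the paper's: recast \eqref{eq:primitive_weak_strong}--\eqref{eq:boundary_conditions_strong_weak} on the Gelfand triple $(\Hs^2_{\n}\times H^1,\Hs^1\times L^2,\mathbb{L}^2\times H^{-1})$, verify the structural hypotheses of \cite{T25L2LDP} via \cite{AHHS22}, and then supply the two a priori estimates. The deterministic skeleton estimate you sketch in (a) is essentially what the paper carries out in Section \ref{sec:skeleton}, with the same Cao--Titi splitting $v=\bar v+\wt v$ and the same handling of the $B(v,\theta)\varphi$ drift by Young's inequality and Gr\"onwall.

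There is, however, a genuine gap in your description of (b). The abstract theorem does \emph{not} ask for an in-probability estimate for the original small-noise SPDE \eqref{eq:primitive_weak_strong}--\eqref{eq:boundary_conditions_strong_weak}; it asks for one for the \emph{controlled} SPDE
\[
\dd X^\eps=-A(X^\eps)\dd t+B(X^\eps)\bphieps\dd t+\sqrt{\eps}\,B(X^\eps)\dd W,
\]
uniformly over $\eps\in(0,\eps_0)$ \emph{and} over all predictable controls $\bphieps$ with $\|\bphieps\|_{L^2(0,T;\ell^2)}\leq K$ a.s. This equation carries simultaneously the random drift $B\bphieps$ (as in the skeleton equation, but now with a random $\bphieps$) and the small noise $\sqrt{\eps}\,B\dd W$. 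Your plan of localizing the original SPDE and then invoking \eqref{eq:bound_Om_weak} does not address this: \eqref{eq:bound_Om_weak} is a bound for the original equation at a fixed noise level and gives no control over the additional $\bphieps$-drift, nor over the uniformity in the control.

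The paper (Section \ref{sec:tiltedspde}) handles this by redoing the three-stage energy argument ($L^2$, then barotropic/baroclinic intermediate, then $H^1$) directly on the controlled equation. At each stage the deterministic $\bphieps$-drift contributions are estimated pathwise exactly as in the skeleton analysis, the quadratic variation terms are bounded by $\eps$ times quantities already controlled, and the local martingale is handled by a stochastic Gr\"onwall lemma (\cite{geiss24}) rather than by BDG plus \eqref{eq:bound_Om_weak}. The output at each stage is a tail bound in probability, uniform in $\eps$ and depending on $K$ only, which then feeds into the next stage. This is the step where your proposal needs to be corrected.
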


\subsection{The case of Stratonovich-type transport noise}\label{sub:stratresult}

We now consider a variant of \eqref{eq:primitive_weak_strong}--\eqref{eq:boundary_conditions_strong_weak} driven by Stratonovich transport noise:
\begin{align}
&\begin{cases}
\displaystyle{\dd v^\eps -\Delta v^\eps\, \dd t=\hhp\Big[  -(v^\eps\cdot \nabla_{\h})v^\eps- w(v^\eps)\partial_3 v^\eps}\\
\qquad \qquad\quad
 \displaystyle{+\nabla_{\h} \int_{-h}^{\cdot}  \kappa(\cdot,\zeta)\theta^\eps(\cdot,\zeta) \dd\zeta+ \fv(\cdot,v^\eps,\theta^\eps) \Big]\dd t }\\
\qquad \qquad \ \ \ \ \ \qquad  \qquad   \qquad   \ \ 
 \displaystyle{+\sqrt{\eps}\sum_{n\geq 1}\hhp [(\phi_{n}\cdot\nabla) v^\eps ]\circ \dd\beta_t^n}, 
& \ \ 
\text{on }\O,\\
\displaystyle{d \theta^\eps -\Delta \theta^\eps \dd t=\Big[ -(v\cdot \nabla_{\h})\theta^\eps- w(v^\eps)\partial_3 \theta^\eps + \ft(\cdot,v^\eps,\theta^\eps ) \Big]\dd t} \\
\qquad \qquad\qquad\qquad\ \ \ \ \  \qquad  \qquad  
\displaystyle{+\sqrt{\eps}\sum_{n\geq 1}(\psi_{n}\cdot\nabla) \theta^\eps \circ \dd\beta_t^n}, 
&\ \ 
\text{on }\O,\\
v^\eps(\cdot,0)=v_0,\ \ \theta^\eps(\cdot,0)=\theta_0, &\ \ 
\text{on }\O,
\end{cases}\label{eq:primitive_Stratonovich}\\[.6em]
&\quad\qquad\qquad\begin{aligned}
\partial_3 v^\eps (\cdot,-h)=\partial_3 v^\eps(\cdot,0)=0 \ \  \text{ on }\Tor^2,&\\
\partial_3 \theta^\eps(\cdot,-h)= \partial_3 \theta^\eps(\cdot,0)+\alpha \theta^\eps(\cdot,0)=0\ \  \text{ on }\Tor^2.&
\end{aligned}\label{eq:primitive_Stratonovich_boundary_conditions}
\end{align}
In \eqref{eq:primitive_Stratonovich},  $\circ$ denotes Stratonovich integration. 

We will assume the following for the coefficients, which coincides with \cite[Ass.\ 8.1, Ass.\ 8.4]{AHHS22}. 

\begin{assumption}
\label{ass:locglob_Stratonovich_strong_weak}
There exists $M,\delta>0$ for which the following hold.
\begin{enumerate}[label=\textup{(\arabic*)},ref=\textup{(\arabic*)}] 
\item\label{it:well_posedness_measurability_Stratonovich} For all $n\geq 1$ and $j\in \{1,2,3\}$, 
$\phi^j_n,\psi^j_n\col \O \to \R$ and $\kappa\col \R_+\times   \O\to \R$ 
are Borel measurable;
\item\label{it:well_posedness_primitive_phi_psi_smoothness_Stratonovich} 
for all  $x\in \O$ and $j,k\in \{1,2,3\}$,
\begin{align*}
\Big\|\Big(\sum_{n\geq 1}| \phi^j_n|^2\Big)^{1/2} \Big\|_{L^{3+\delta}(\O)}+
\Big\|\Big(\sum_{n\geq 1}|\partial_k \phi^j_n|^2\Big)^{1/2} \Big\|_{L^{3+\delta}(\O)} 
\leq M,&\\
\Big(\sum_{n\geq 1} | \psi^j_n(x) |^2\Big)^{1/2} + 
\Big\|\Big(\sum_{n\geq 1}|\nabla\cdot\psi_n|^2 \Big)^{1/2}\Big\|_{L^{3+\delta}(\O)}
\leq M;&
\end{align*}
\item\label{it:well_posedness_primitive_kone_smoothness_Stratonovich}
$\kappa$ satisfies Assumption \ref{it:well_posedness_primitive_kone_smoothness};
\item\label{it:independence_z_variable_Stratonovich}
for all $n\geq 1$, $j\in \{1,2\}$ and $x=(x_{\h},x_3)\in \Tor^2\times (-h,0)=\O$,   
$\phi_n^j(x)$ is independent of $x_3$;
\item\label{it:BCpsiphi} for all $n\geq 1$ and $j\in\{1,2\}$,
$$
\Big\|\sum_{n\geq 1} \phi_n^j(\cdot,0)\phi_n^3(\cdot,0)\Big\|_{H^{\frac{1}{2}+\delta}(\Tor^2)}
+
\Big\|\sum_{n\geq 1} \phi_n^j(\cdot,-h)\phi_n^3(\cdot,-h)\Big\|_{H^{\frac{1}{2}+\delta}(\Tor^2)}
\leq M;
$$
\item\label{it:psi_3_null_regularity_assumption_phi_boundary} 
for all $n\geq 1$ and  $x_{\h}\in \Tor^2$,
\begin{align*}
\psi_n^3(x_{\h},0)=
\psi_n^3(x_{\h},-h)=0 \text{ and }\phi^3_n(x_{\h},0)=
\phi^3_n(x_{\h},-h)=0;
\end{align*}
\item $\fv$ and $\ft$ are as in Assumption \ref{it:nonlinearities_strong_weak} and Assumption \ref{it:sublinearity_Gforce}.  
\end{enumerate}
\end{assumption}

As in \cite[\S8.2]{AHHS22} (see also \eqref{eq:Ito_corrections_strong_weak}--\eqref{eq:def_LTp} in Section \ref{sec: Strat} of this manuscript), the Stratonovich SPDE \eqref{eq:primitive_Stratonovich} can be reformulated as an SPDE in the It\^o form by considering some additional correction terms in the deterministic part. Hence, $L^2$-strong-weak solutions to \eqref{eq:primitive_Stratonovich}--\eqref{eq:primitive_Stratonovich_boundary_conditions} can be defined in full analogy with Definition \ref{def:sol_strong_weak}, modifying only the identities in the second bullet of part \ref{it:defsol1} accordingly. 
Due to \cite[Th.\ 8.5]{AHHS22}, 
Assumption \ref{ass:locglob_Stratonovich_strong_weak} already 
ensures global well-posedness of \eqref{eq:primitive_Stratonovich}--\eqref{eq:primitive_Stratonovich_boundary_conditions} for all $\eps\in\R$. We now have the following LDP result. The proof of Theorem \ref{th:stratonovich} will be given in Section \ref{sec: Strat}.

\begin{theorem}\label{th:stratonovich}   
Let Assumption \ref{ass:locglob_Stratonovich_strong_weak} be satisfied. Let 
$(v_0,\theta_0)\in \H^1(\O)\times L^2(\O)$ and let $T>0$. For $\eps\in(0,1]$, let $(v^\eps,\theta^\eps)$ be the $L^2$-strong-weak solution to \eqref{eq:primitive_Stratonovich}--\eqref{eq:primitive_Stratonovich_boundary_conditions}. Then $((v^\eps,\theta^\eps))$ satisfies the large deviation principle on $L^2(0,T;\H_{\n}^2(\O)\times H^1(\O))\cap C([0,T];\H^1(\O)\times L^2(\O))$ with the same  rate function as that  in Theorem \ref{th:PE LDP}. 
\end{theorem}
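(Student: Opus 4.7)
The plan is to reduce Theorem \ref{th:stratonovich} to the It\^o result Theorem \ref{th:PE LDP} by converting the Stratonovich formulation to It\^o form. As was already done in \cite[\S8]{AHHS22}, the Stratonovich transport noise $\sqrt{\eps}\sum_n \hhp[(\phi_n\cdot\nabla)v^\eps]\circ \dd \beta_t^n$ produces, in It\^o form, the same martingale integral together with a deterministic It\^o--Stratonovich corrector of order $\eps$ of schematic form $\frac{\eps}{2}\sum_n \hhp[(\phi_n\cdot\nabla)\hhp[(\phi_n\cdot\nabla) v^\eps]]$; similarly the temperature equation picks up $\frac{\eps}{2}\sum_n (\psi_n\cdot\nabla)(\psi_n\cdot\nabla)\theta^\eps$. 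After these substitutions, \eqref{eq:primitive_Stratonovich}--\eqref{eq:primitive_Stratonovich_boundary_conditions} is equivalent to an It\^o SPDE of the same shape as \eqref{eq:primitive_weak_strong}--\eqref{eq:boundary_conditions_strong_weak}, with drift of the form $A(v^\eps,\theta^\eps)+\eps\, C(v^\eps,\theta^\eps)$, where $C$ collects the two correctors (and any pressure adjustment stemming from the interaction with $\hhp$).

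Having rewritten the problem, I would appeal to the same abstract $L^2$-LDP framework of \cite{T25L2LDP} that underpins Theorem \ref{th:PE LDP}, applied to the $\eps$-dependent coefficient pair $(A+\eps C,\sqrt{\eps}B)$. Because the correction is prefactored by $\eps$, it disappears from any skeleton equation driven by a fixed control $\varphi\in L^2(0,T;\ell^2)$: sending $\eps\downarrow 0$ in the deterministic equation $Y'= -(A+\eps C)(Y)+B(Y)\varphi$ recovers exactly \eqref{eq:primitive_skeleton}. Consequently the rate function of the Stratonovich system coincides with that of the It\^o system, as claimed. To invoke the abstract theorem, I would re-verify items \ref{1}, \ref{2}, \ref{3}, and \ref{4} of Subsection \ref{ss:strategy}: item \ref{2}\ is provided by \cite[Th.\ 8.5]{AHHS22}; items \ref{1}\ and \ref{3}\ carry over from the It\^o case because Assumption \ref{ass:locglob_Stratonovich_strong_weak}, especially \ref{it:independence_z_variable_Stratonovich}, \ref{it:BCpsiphi} and \ref{it:psi_3_null_regularity_assumption_phi_boundary}, preserves the barotropic/baroclinic splitting and the boundary compatibility needed in \cite{AHHS22}, while the $\eps C$ term contributes only transport-type second-order perturbations dominated by the Laplacian after absorption.

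The main obstacle is item \ref{4}, namely the stochastic a priori estimate in probability for the $\eps$-dependent equation: one must derive a uniform-in-$\eps\in(0,1]$ analogue of \eqref{eq:bound_Om_weak} for the Stratonovich SPDE. Here the $\eps C$ drift is helpful rather than problematic --- it is exactly the corrector needed to cancel the quadratic variation produced by the transport martingale, so the subtle cancellations exploited in the It\^o energy analysis of \cite[\S8]{AHHS22} persist uniformly in $\eps$. Tracking the $\eps$-dependence carefully through these cancellations, applying the stochastic Gr\"onwall lemma as in the It\^o case, and then quoting the abstract LDP of \cite{T25L2LDP}, yields Theorem \ref{th:stratonovich}.
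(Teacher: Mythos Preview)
Your overall plan---convert to It\^o via the $\eps$-corrector, apply the abstract $L^2$-LDP machinery of \cite{T25L2LDP}, and observe that the corrector vanishes from the skeleton equation---matches the paper's proof. You are also right that well-posedness comes from \cite[Th.\ 8.5]{AHHS22} and that the a priori estimates for the tilted SPDE must be redone with the extra $\eps C$ terms; the paper does precisely this, estimating each correction term by $O(\eps)$ times a controlled quantity (rather than relying on a cancellation against the quadratic variation, as you suggest---but this is a stylistic difference).

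There is, however, a genuine gap in your verification of item \ref{1}\ (the structural assumptions on $(A,B)$). You write that these ``carry over from the It\^o case,'' but Assumption \ref{ass:locglob_Stratonovich_strong_weak} does \emph{not} imply the parabolicity condition Assumption \ref{it:well_posedness_primitive_parabolicity}. Without that bound on $\sum_n(\phi_n\cdot\xi)^2$, the coercivity of the linear pair $(A_0,B_0)$ in Lemma \ref{lem:coer} may fail, and then Assumption \ref{ass:critvarsettinglocal} (needed for the abstract theorem) is not available. The paper flags this explicitly and resolves it by a rescaling trick: for sufficiently small $\delta>0$, the pair $(A,\delta B)$ \emph{does} satisfy the full It\^o Assumptions \ref{ass:well_posedness_primitive} and \ref{ass:global_primitive}, so the abstract Stratonovich-variant Theorem \ref{th:LDP general Strat} applies to $(A,\delta B)$. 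The LDP for the original $(A,B)$ then follows by the substitution $\eps\mapsto\eps\delta^2$, $\varphi\mapsto\delta^{-1}\varphi$, which leaves both the solution family and the rate function unchanged. Without this step (or an equivalent one), your argument does not close.
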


\section{Proof of the LDP -- case of It\^o noise}\label{sec:proof main result}

Here we outline the proof of Theorem \ref{th:PE LDP}. Our strategy is to  reformulate the small-noise stochastic  primitive equations \eqref{eq:PE original} as an abstract stochastic evolution equation, and then  apply Theorem \ref{th:LDP general}.  
The  Stratonovich noise analog (Theorem \ref{th:stratonovich}) will later be treated in Section \ref{sec: Strat}.

We will rewrite the small-noise stochastic  primitive equations \eqref{eq:primitive_weak_strong}--\eqref{eq:boundary_conditions_strong_weak} in the form:  
\begin{equation}\label{eq:SPDE}
   \begin{cases}
  &\dd Y^\eps(t)=-A(t,Y^\eps(t))\dd t+\sqrt{\eps}B(t,Y^\eps(t))\dd \Br_{\ell^2}(t), \quad t\in[0,T], \\
  &Y^\eps(0)=u_0\in H,
\end{cases}
\end{equation} 
where $\Br_{\ell^2}$ is the $\ell^2$-cylindrical Brownian motion defined by \eqref{eq:cyl}, and we consider the following Gelfand triple:  
\begin{align}
    V=\H_{\n}^2(\O)\times H^1(\O),\qquad H=\H^1(\O)\times L^2(\O), \qquad V^*=\mathbb{L}^2(\O)\times H^{-1}(\O). \label{eq:Gelfand}
\end{align}
See Subsection \ref{sub:prelim} for the definitions of the spaces above. 
This choice of  Gelfand triple encodes the strong-weak setting, as well as the  divergence-free condition and part of the boundary conditions. For later convenience, we endow the spaces $V$ and $H$ with   inner products 
\begin{align}\label{eq:inner prod defs}
\begin{split}
&\big((v_1,\theta_1),(v_2,\theta_2)\big)_{V}\ceqq \big(v_1,v_2\big)_{\H^2(\O)}+ C_0 \big(\theta_1,\theta_2\big)_{ H^1(\O)}, \\
&\big((v_1,\theta_1),(v_2,\theta_2)\big)_{H}\ceqq \big(v_1,v_2\big)_{\H^1(\O)}+ C_0 \big(\theta_1,\theta_2\big)_{ {L}^2(\O)},
\end{split}
\end{align} 
where $C_0>0$ is a constant depending only on the $M$ and $\nu$ introduced in Assumption \ref{ass:well_posedness_primitive}, which will be fixed in Lemma \ref{lem:coer} below. 
Note that consequently, $C_0$ also appears in the pairing $\langle \cdot,\cdot \rangle\col V^*\times V\to \R$ induced by the inner product on $H$. 
We will consider solutions $Y^\eps$ to \eqref{eq:SPDE} that take values in the following maximal regularity space:
\begin{equation}\label{eq: def MR space}
    \MR(0,T)\coloneqq C([0,T];\H^1(\O)\times L^2(\O))\cap L^2(0,T;\H_{\n}^2(\O)\times H^1(\O)). 
\end{equation}

As explained in the introduction, with such a reformulation for initial data in $H$ defined above, a major challenge arises: $(A,B)$ is not coercive. In particular, the stochastic primitive equations cannot be studied in a variational framework, and for the LDP, one cannot utilize the various abstract large deviation results for variational settings such as \cite{pan24,TV24}.  
Instead, we will use the LDP results from \cite{T25L2LDP}, 
which is suitable for equations that have the same solution space $\MR(0,T)$ as in  the variational setting, but for which the coefficients are non-coercive. It offers practical sufficient conditions that ensure that the LDP holds for solutions $(Y^\eps)$ to \eqref{eq:SPDE}, see \ref{1}--\ref{4} in the Introduction. This reduces our amount of work significantly compared to proving an LDP via the weak convergence approach from the start. 
For convenience, special cases of the results from  \cite{T25L2LDP}  that suffice  for our applications are stated in Appendix \ref{appendix}, see Theorem \ref{th:LDP general}.

Now we will   rewrite the small-noise stochastic  primitive equations \eqref{eq:primitive_weak_strong}--\eqref{eq:boundary_conditions_strong_weak} in the form \eqref{eq:SPDE} with Gelfand triple $(V,H,V^*)$ defined by \eqref{eq:Gelfand}.  
Recall that the divergence-free condition and the  boundary conditions lead to a decomposition  $(u,\theta)=(v,w(v),\theta)$, where $v$ is the horizontal component of the velocity and $w(v)$ is defined by \eqref{eq:def_w_v}. Thus, for $\eps>0$, it suffices to consider $Y^\eps\ceqq(v^\eps,\theta^\eps)$. 

Now, analogous to \cite[(5.1)]{AHHS22}, under Assumption \ref{ass:well_posedness_primitive} for the stochastic primitive equations, we have the following reformulation of \eqref{eq:primitive_weak_strong}--\eqref{eq:boundary_conditions_strong_weak}: 
\begin{align}\label{reformulation}
&\text{
$Y^\eps$ is  an $L^2$-strong-weak solution to \eqref{eq:primitive_weak_strong}--\eqref{eq:boundary_conditions_strong_weak} in the sense of Definition \ref{def:sol_strong_weak}}\notag\\
&\hspace{5cm}\iff \\
&\text{ $Y^\eps$ is a  strong solution to \eqref{eq:SPDE} in the sense of Definition \ref{def:sol}, with } \notag\\
&\qquad\quad\text{ Gelfand triple \eqref{eq:Gelfand} and with $(A,B)$ defined by \eqref{eq:defAB}, \eqref{eq: defs coeff}}. \notag
\end{align}
Here, for the equivalence \eqref{reformulation} to hold, we define the coefficients $A,B$ in \eqref{eq:SPDE} as follows:  
\begin{equation}\label{eq:defAB}
  A(t,y)\ceqq A_0(t,y)-F(t,y), \qquad B(t,y)\ceqq B_0y+G(t,y),
\end{equation}
with  
\begin{align}\label{eq: defs coeff}
\begin{cases}
&A_0(t,v,\theta)=\begin{pmatrix}-\Delta v-\PP_{\gamma,\phi}(t,\cdot,v)-(\mathcal{J}_\kappa {\theta})(t,\cdot)\\-\DeltawR\theta\end{pmatrix},\\[11pt]
&B_0(v,\theta)=\begin{pmatrix}(\hhp[(\phi_n\cdot\nabla)v])_{n\geq 1}\\((\psi_n\cdot\nabla)\theta)_{n\geq 1}\end{pmatrix},\\[11pt]
&F(t,(v,\theta))=\begin{pmatrix} \hhp\big[ -(v\cdot\nabla_H)v-w(v)\pz v+F_v(t,\cdot,v,\theta,\nabla v) +\PP_{\gamma,G}(t,\cdot,v)\big] \\  -(v\cdot\nabla_H)\theta-w(v)\pz \theta+F_\theta(t,\cdot,v,\theta,\nabla v)  \end{pmatrix}, \\[11pt]
&G(t,(v,\theta))=\begin{pmatrix}
    (\hhp [G_{v,n}(t,\cdot,v)])_{n\geq 1}\\
    (G_{\theta,n}(t,\cdot,v,\theta,\nabla v))_{n\geq 1}
\end{pmatrix},
\end{cases}
\end{align} 
where
\[
(\mathcal{J}_\kappa {\theta})(t,x)\ceqq \nabla_H\int_{-h}^{x_3} \kappa(t,x_H,\zeta){\theta}(x_H,\zeta) \dd\zeta, \qquad t\in\R_+,x=(x_H,x_3)\in\O,{\theta}\in H^1(\O), 
\]
and $\PP_\gamma$ is defined by $\PP_\gamma\ceqq \PP_{\gamma,\phi}+\PP_{\gamma,G}$, with 
\begin{align}\label{eq:def Pgamma}
\begin{split}
&\PP_{\gamma,\phi}(t,x,v)\ceqq\Big(\sum_{n\geq 1}\sum_{m=1}^2 \gamma_n^{l,m}(t,x) \big(\Q[(\phi_n\cdot\nabla)v] \big)^m \Big)_{l=1}^2, \\ 
&\PP_{\gamma,G}(t,x,v)\ceqq \Big(\sum_{n\geq 1}\sum_{m=1}^2 \gamma_n^{l,m}(t,x) \big(\Q[G_{v,n}(\cdot,v)] \big)^m \Big)_{l=1}^2,
\end{split}
\end{align}
and $(\Q[\cdot])^m$ denotes the $m^{\text{th}}$ component of the vector $\Q[\cdot]$ defined in Subsection \ref{sub:prelim}. 

\begin{remark}
Let us note two minor details for the equivalence \eqref{reformulation}. Similar comments apply to the skeleton equation. 
\begin{itemize}
    \item Concerning ``$\Rightarrow$": for Definition \ref{def:sol} to hold, it is required that  $(v^\eps,\theta^\eps)\in C([0,T];\Hs^1(\O)\times L^2(\O))$ a.s.\ (besides belonging a.s.\ to $L^2(0, {T};\Hs_{\n}^2(\O)\times H^1(\O))$), but this is automatically satisfied due to \eqref{eq:integrability_strong_weak}. 
\item Reversely, for ``$\Leftarrow$": in Definition \ref{def:sol_strong_weak}, we require \eqref{eq:integrability_strong_weak}, containing stronger  integrability conditions than in Definition \ref{def:sol}. However, due to the structural conditions on the coefficients in Assumption \ref{ass:well_posedness_primitive}, the integrability requirements are equivalent. 
\end{itemize}
\end{remark}

With the reformulation \eqref{reformulation} at hand, one can apply Theorem \ref{th:LDP general} to the primitive equations, provided that Assumptions  \ref{ass:critvarsettinglocal} and \ref{ass:coer replace} are verified for $(A,B)$ defined by \eqref{eq:defAB}. This will exactly be our proof setup for  Theorem \ref{th:PE LDP}. The verification of Assumption \ref{ass:coer replace} for the above coefficients will occupy the remainder of this paper. 

As a warm up, let us right away verify that Assumption \ref{ass:critvarsettinglocal} is satisfied for the stochastic primitive equations. The measurability conditions in \ref{it:gen0} trivially follow from those in Assumption \ref{ass:well_posedness_primitive}. Therefore, we  begin  with   \ref{it:coercivelinear} in the next lemma. 

\begin{lemma}[Coercivity of the linear part]\label{lem:coer} 
Let $(V,H,V^*)$ and $(A_0,B_0)$  be as in  \eqref{eq:Gelfand} and  \eqref{eq: defs coeff}. Then there exists a $C_0>0$ such that when $V$ and $H$ are equipped with the inner products \eqref{eq:inner prod defs}, then there exist   $\hat{M},\hat{\nu}>0$ depending only on the constants in Assumption \ref{ass:well_posedness_primitive} such that 
$$
\langle A_0 (v,\theta),(v,\theta)\rangle 
-\frac{1}{2}\|B_0(v,\theta)\|_{\mathcal{L}_2(\ell^2,H)}
\geq \hat{\nu}\|(v,\theta)\|_{V}^2
-\hat{M}\|(v,\theta)\|_{H}^2 \qquad \text{for all $(v,\theta)\in V$}. 
$$
\end{lemma}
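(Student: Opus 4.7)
The plan is to expand both sides of the inequality explicitly, identifying the dissipative terms coming from $-\Delta v$ and $-\DeltawR\theta$, and then to exploit the strict parabolicity $\nu<2$ of Assumption \ref{it:well_posedness_primitive_parabolicity} together with a large enough choice of $C_0$ to absorb all cross- and lower-order contributions. Throughout, $\eta>0$ denotes a generic small parameter that will be fixed at the end.

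\emph{Dissipative part.} Since the pairing on $V^*\times V$ is induced by the $H$-inner product,
\[
\langle A_0(v,\theta),(v,\theta)\rangle = (-\Delta v, v)_{\H^1(\O)} - (\PP_{\gamma,\phi}(v), v)_{\H^1(\O)} - (\mathcal{J}_\kappa\theta, v)_{\H^1(\O)} + C_0\langle -\DeltawR \theta, \theta\rangle.
\]
Integrating by parts using the vertical Neumann boundary condition in \eqref{eq:def_H_2_N_strong_weak} and horizontal periodicity gives $(-\Delta v, v)_{\H^1}=\|\nabla v\|_{L^2}^2 + \|\Delta v\|_{L^2}^2$, which by elliptic regularity for the Neumann/periodic Laplacian controls $c\|v\|_{\H^2}^2-C\|v\|_{\H^1}^2$. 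From \eqref{eq:weak robin pairing}, the Robin-Laplacian term equals $C_0(\|\nabla\theta\|_{L^2}^2 + \alpha\|\theta(\cdot,0)\|_{L^2(\Tor^2)}^2)$; if $\alpha<0$ the boundary contribution is bounded by $\eta\|\nabla\theta\|_{L^2}^2+C_\eta\|\theta\|_{L^2}^2$ using the standard trace inequality.

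\emph{Noise part.} By \eqref{eq: grad hhp}, $\sum_n\|\hhp[(\phi_n\cdot\nabla)v]\|_{\H^1}^2\leq\sum_n\|(\phi_n\cdot\nabla)v\|_{\H^1}^2$. The parabolicity \ref{it:well_posedness_primitive_parabolicity} applied pointwise with $\xi=\nabla v$ yields $\sum_n\|(\phi_n\cdot\nabla)v\|_{L^2}^2\leq\nu\|\nabla v\|_{L^2}^2$. For the gradient we use the Leibniz rule $\partial_k((\phi_n\cdot\nabla)v)=(\phi_n\cdot\nabla)\partial_k v+(\partial_k\phi_n\cdot\nabla)v$, Young's inequality, and parabolicity applied with $\xi=\nabla\partial_k v$, to obtain
\[
\sum_n\|\nabla((\phi_n\cdot\nabla)v)\|_{L^2}^2\leq\nu(1+\eta)\|\nabla^2 v\|_{L^2}^2+C_\eta\,\Big\|\Big(\sum_n|\nabla\phi_n|^2\Big)^{1/2}\Big\|_{L^{3+\delta}}^2\|\nabla v\|_{L^{r}}^2,
\]
where $1/r=1/2-1/(3+\delta)$; the Sobolev embedding $\H^1\hookrightarrow L^r(\O)$ (valid since $r<6$) and an interpolation between $\H^1$ and $\H^2$ then bound the last term by $\eta\|v\|_{\H^2}^2+C_\eta\|v\|_{\H^1}^2$. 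Analogously, $\sum_n\|(\psi_n\cdot\nabla)\theta\|_{L^2}^2\leq\nu\|\nabla\theta\|_{L^2}^2$.

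\emph{Combining and main obstacle.} The resulting leading $\H^2$-contribution is $(1-\nu(1+\eta)/2)\|\nabla^2 v\|_{L^2}^2$ and the leading $H^1$-temperature contribution is $(1-\nu/2)C_0\|\nabla\theta\|_{L^2}^2$, both strictly positive once $\eta$ is chosen small, because $\nu<2$. The cross terms are controlled using Cauchy--Schwarz and Young's inequality together with Assumptions \ref{it:well_posedness_primitive_phi_smoothness} and \ref{it:well_posedness_primitive_kone_smoothness}, giving $|(\PP_{\gamma,\phi}(v),v)_{\H^1}|\leq\eta\|v\|_{\H^2}^2+C_\eta\|v\|_{\H^1}^2$ and $|(\mathcal{J}_\kappa\theta,v)_{\H^1}|\leq\eta\|v\|_{\H^1}^2+C_\eta\|\theta\|_{H^1}^2$. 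We first fix $C_0$ large enough so that $C_\eta\|\theta\|_{H^1}^2$ is absorbed into $(1-\nu/2)C_0\|\nabla\theta\|_{L^2}^2$ modulo a multiple of $\|\theta\|_{L^2}^2$, and then fix $\eta$ small enough so that the $\eta\|v\|_{\H^2}^2$ terms are absorbed into the dissipative part, yielding the claim with $\hat\nu>0$ and $\hat M>0$ depending only on the constants in Assumption \ref{ass:well_posedness_primitive}. The sharpest step is matching the constants in front of $\|\nabla^2 v\|_{L^2}^2$: the noise produces $\nu/2$ and the Laplacian dissipation produces $1$, which is exactly why the strict parabolicity $\nu<2$ is necessary; the lower-order Leibniz correction $(\partial_k\phi_n\cdot\nabla)v$ is controlled by the $L^{3+\delta}$-regularity of $\nabla\phi$ via Sobolev embedding, and is the same estimate already carried out in \cite[\S5]{AHHS22} for the strong-setting local well-posedness.
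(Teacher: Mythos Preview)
Your approach is essentially the same as the paper's: expand the pairing, extract $\|\Delta v\|_{L^2}^2$ and $\|\nabla\theta\|_{L^2}^2$ from the dissipative part, use the strict parabolicity $\nu<2$ to bound the noise contribution by $\nu(1+\eta)\|\nabla^2 v\|_{L^2}^2$ plus lower order, and balance constants. Two small slips deserve attention. First, the bound $|(\mathcal{J}_\kappa\theta,v)_{\H^1}|\leq\eta\|v\|_{\H^1}^2+C_\eta\|\theta\|_{H^1}^2$ is off: the duality on $\mathbb{L}^2\times\H^2_\n$ extending the $\H^1$-inner product reads $\langle f,v\rangle=(f,v-\Delta v)_{L^2}$, so the correct bound has $\eta\|v\|_{\H^2}^2$ on the right (this is what the paper gets, and it is still absorbable). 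Second, your order of fixing parameters is circular: you propose to fix $C_0$ large enough to absorb $C_\eta\|\theta\|_{H^1}^2$ \emph{before} fixing $\eta$, but $C_\eta$ depends on $\eta$. The correct order is $\eta$ first (to secure the $v$-coercivity, which is independent of $C_0$), then $C_0$ large relative to $C_\eta$; the paper does exactly this. Finally, note that matching the coefficient $1$ from $\|\Delta v\|_{L^2}^2$ against $\nu(1+\eta)/2$ from the noise requires $\|\Delta v\|_{L^2}^2=\|\nabla^2 v\|_{L^2}^2$ exactly, which is Kadlec's formula for the Neumann/periodic Laplacian (the paper cites \cite[Lem.~A.1]{AHHS22}); generic elliptic regularity would only give some $c>0$, which does not suffice for $\nu$ close to $2$.
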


\begin{proof}
As in \cite[Rem.\ 4.3]{AHHS22}, by standard arguments it is enough to consider the case $\gamma\equiv 0$. 
In that case, we can write $A_0=A_1+A_2$, where 
$$
A_1 (v,\theta)= \begin{pmatrix}-\Delta v\\-\DeltawR\theta\end{pmatrix},
\qquad 
A_2 (v,\theta)= \begin{pmatrix} -(\mathcal{J}_\kappa {\theta})(t,\cdot)\\0\end{pmatrix}.
$$

Recalling \eqref{eq:weak robin pairing} and using continuity of the trace map $H^{1/2+\delta}(\O)\to L^2(\Tor^2)$ with $\delta\in(0,1/2)$, we have for all $\sigma>0$:
\begin{align}\label{eq: est theta robin laplace}
(\DeltawR\theta)(\theta)\leq   
-\|\nabla \theta\|_{{L}^2(\O)}^2+|\alpha|C\|\theta(\cdot,0)\|_{L^2(\Tor^2)}^2 
&\leq   -\|\nabla \theta\|_{{L}^2(\O)}^2+|\alpha|C\tilde{C}\|\theta\|_{H^{1/2+\delta}(\O)}^2 \notag\\
&\leq  - (1 -\eps)\|\nabla \theta\|_{{L}^2(\O)}^2 + C_\eps \|\theta\|_{{L}^2(\O)}^2.
\end{align}
In the last line, we used interpolation and Young's inequality.  

Furthermore, we have by the Neumann boundary condition for $v$, partial integration and Kadlec's formula \cite[Lem.\ A.1 ($\beta=0$)]{AHHS22}:  
\begin{align}\label{eq:Delta v,v}
\<\Delta v,v\?_{\mathbb{L}^2(\O),\mathbb{H}_{\n}^2(\O)}=(\Delta v,v)_{L^2(\O)}-(\Delta v,\Delta v)_{L^2(\O)}
&=-\|\nabla v\|_{L^2(\O)}^2-\textstyle{\sum_{i,j=1}^3}\|\partial_{ij}^2v\|_{L^2(\O)}^2 \notag\\
&
=-\|v\|_{H^2(\O)}^2+\|v\|_{L^2(\O)}^2. 
\end{align}

Thus,  recalling \eqref{eq:inner prod defs} and the definition of $A_1$, and applying the above,  we have
\begin{align*}
\langle A_1 (v,\theta),(v,\theta)\rangle 
&= -\<\Delta v,v\?_{\mathbb{L}^2(\O),\mathbb{H}_{\n}^2(\O)}
-C_0(\DeltawR\theta)(\theta)\notag\\
&\geq \| v\|_{H^2(\O)}^2-\|v\|_{L^2(\O)}^2+C_0(1 -\eps)\|\nabla \theta\|_{{L}^2(\O)}^2 -C_0C_\eps \|\theta\|_{{L}^2(\O)}^2.
\end{align*} 
where $C_0>0$ is a constant that will be chosen in a moment. Now, arguing as in \cite[(4.19), Rem.\ 4.3]{AHHS22}, we have for all $\sigma>0$,
\begin{align*}
|\langle A_2 (v,\theta),(v,\theta)\rangle |
 \leq   \Big|\int_{\O} \Delta v \cdot \mathcal{J}_\kappa {\theta}\dd x  \Big| 
&\leq  \sigma \| v\|_{H^2(\O)}^2+ C_{\sigma,M} \| \nabla\theta\|_{L^2(\O)}^2,
\end{align*}
where $C_{\sigma,M}>0$ depends only on $\sigma$ and on the constant $M$ defined in Assumption \ref{ass:well_posedness_primitive}. The value of $\sigma$ will be fixed at the end of the proof. 
Next, we estimate the $B_0$-part. Note that
$$
\|B_0(v,\theta)\|_{\mathcal{L}_2(\ell^2,H)}^2
=
\|(\hhp [(\phi_n \cdot \nabla) v])_{n}\|_{H^1(\O)}^2
+
C_0\|( (\psi_n \cdot \nabla) \theta)_{n}\|_{L^2(\O)}^2.
$$
We estimate the two terms on the right-hand side separately. By the pointwise estimate in Assumption \ref{it:well_posedness_primitive_parabolicity},
\begin{equation*}
\|( (\psi_n \cdot \nabla) \theta)_{n}\|_{L^2(\O)}^2
= \int_{\O} \Big|\sum_{n\geq 1} (\psi_n\cdot\nabla) v\Big|^2 \dd x \leq \nu 
\|\nabla \theta\|_{L^2}^2.
\end{equation*}
For the other term, note that by the boundedness of $\hhp$ and $(\phi_n)_{n\geq 1}$ (Remark \ref{rem:assumptionslocal}), we have
\begin{align*}
\|(\hhp [(\phi_n \cdot \nabla) v])_{n}\|_{H^1(\O)}^2
&=\|(\hhp [(\phi_n \cdot \nabla) v])_{n}\|_{H^1(\O)}^2
+\|(\hhp [(\phi_n \cdot \nabla) v])_{n}\|_{L^2(\O)}^2\\
&\leq \|(\nabla\hhp [(\phi_n \cdot \nabla) v])_{n}\|_{L^2(\O)}^2
+C\| \nabla v\|_{L^2(\O)}^2.
\end{align*}
For the first term in the last line, \cite[(4.26)]{AHHS22} yields for any  $\nu'$ and $\nu''$ with  $\nu<\nu'<\nu''<2$: 
\begin{equation}\label{eq: coerc est}
        \|(\nabla\hhp [(\phi_n \cdot \nabla) v])_{n}\|_{L^2(\O;\ell^2)}^2
\leq \nu'' \|v\|_{H^2(\O)}^2
+ C_{\nu,\nu',\nu''} \|v\|_{H^1(\O)}^2.
\end{equation}
Now fix any $\nu'$ and $\nu''$ as above. Fix a $\sigma\in(0,1-\nu''/2)$, fix an $\eps\in(0,1-\nu/2)$ and fix a $C_0>0$ such that $C_0(1-\eps-{\nu}/{2})-C_{\sigma,M}>0$. Then the claim of the lemma follows from the estimates above (note the factor $1/2$ before the $B_0$-term). 
\end{proof}

Next, we turn to Assumption \ref{it:growth AB}. 

\begin{lemma}[Boundedness and local Lipschitz estimates]\label{lem:lip}
Let $(V,H,V^*)$ and $(A_0,B_0,F,G)$  be as in  \eqref{eq:Gelfand} and  \eqref{eq: defs coeff}. Then $(A_0,B_0, F, G)$ satisfies   Assumption \ref{it:growth AB}. Furthermore, $F(\cdot,0)\in  L^2_\loc(\R_+;V^*) $ and $G(\cdot,0)\in L^2_\loc(\R_+;\UH)$.
\end{lemma}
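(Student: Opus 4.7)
The plan is to verify each of the four requirements on $(A_0,B_0,F,G)$ separately, leveraging as much as possible the estimates already obtained in \cite{AHHS22}, since the coefficients $(A_0,B_0,F,G)$ are precisely those used there to analyze \eqref{eq:primitive_weak_strong}--\eqref{eq:boundary_conditions_strong_weak} in the strong-weak setting. Linearity of $A_0$ and $B_0$ is immediate from \eqref{eq: defs coeff}, so the point is to establish (i) uniform-in-$t$ boundedness $A_0(t,\cdot)\colon V\to V^*$ and $B_0(t,\cdot)\colon V\to \UH$, (ii) local Lipschitz estimates for $F$ and $G$ from $V$ into $V^*$ and $\UH$, respectively, and (iii) the $L^2_{\loc}$-integrability of $F(\cdot,0)$ and $G(\cdot,0)$.

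For the boundedness of $A_0$, I would treat the three constituents of $A_0(t,v,\theta)$ one at a time. The term $-\Delta v$ is bounded $\Hs_{\n}^2(\O)\to \Ls^2(\O)$ by definition. The weak Robin Laplacian $-\DeltawR \theta$ is bounded $H^1(\O)\to H^{-1}(\O)$ directly from \eqref{eq:weak robin pairing} and the trace inequality. For $\mathcal{J}_\kappa \theta$, I would use Assumption \ref{it:well_posedness_primitive_kone_smoothness} together with Minkowski and H\"older's inequalities to obtain a bound in $\Ls^2(\O)$ linear in $\|\theta\|_{L^2(\O)}$. For the lower-order transport correction $\PP_{\gamma,\phi}(t,\cdot,v)$, I would use Assumption \ref{it:well_posedness_primitive_phi_smoothness} (the $L^{3+\delta}$-bound on $\gamma$ and $\phi$) together with the Sobolev embedding $H^1(\O)\hookrightarrow L^{6}(\O)$ and boundedness of $\Q$ on $L^p(\Tor^2)$, so as to bound $\|\PP_{\gamma,\phi}(t,\cdot,v)\|_{\mathbb{L}^2(\O)}\lesssim \|v\|_{\Hs^2(\O)}$. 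For $B_0$, the $\ell^2$-bound of $(\psi_n\cdot\nabla)\theta$ in $L^2(\O)$ is immediate from Assumption \ref{it:well_posedness_primitive_L_infty_bound}, while the $H^1(\O;\ell^2)$-bound of $(\hhp[(\phi_n\cdot\nabla)v])_n$ follows from \eqref{eq: grad hhp} combined with the pointwise-in-$x$ $\ell^2$-estimate for $\phi$ in Remark \ref{rem:assumptionslocal} and  Assumption \ref{it:well_posedness_primitive_phi_smoothness} (this is essentially the computation leading to \eqref{eq: coerc est}).

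For the nonlinear part, the key identities are already available in \cite[\S4-\S5]{AHHS22}. In particular, the transport-type terms $(v\cdot\nabla_\h)v$ and $w(v)\pz v$ define locally Lipschitz maps $\Hs^2_{\n}(\O)\to \Ls^2(\O)$ with polynomial growth coming from the standard Cao--Titi-type splittings between barotropic and baroclinic modes; the analogous temperature nonlinearities $(v\cdot\nabla_\h)\theta$ and $w(v)\pz\theta$ give locally Lipschitz maps into $H^{-1}(\O)$ after integrating by parts to produce $-\nabla_\h\cdot(v\theta) -\pz(w(v)\theta)$ as in \eqref{eq:integrability_strong_weak}. For the Carath\'eodory terms $F_v,F_\theta,G_{v,n},G_{\theta,n}$ I would translate the pointwise Lipschitz bounds of Assumption \ref{it:nonlinearities_strong_weak} into Lipschitz estimates on $V$ using the Sobolev embedding $H^1(\O)\hookrightarrow L^6(\O)$ and $H^2(\O)\hookrightarrow W^{1,q}(\O)$ for $q<6$; the $|y|^4$ growth combined with H\"older gives a bound of the form $\lesssim (1+\|v\|_{H^1}^4+\|v'\|_{H^1}^4)\|v-v'\|_{H^1}$ in $L^2(\O)$, and similarly the $|z|^{2/3}$, $|Y|^{2/3}$ growth terms are controlled using the extra $H^2$-regularity of $v$. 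Matching these polynomial growth rates to the target spaces $V^*$ and $\UH$ is the step where the most care is required, and is the main obstacle, but the estimates are essentially those already used in \cite[\S4]{AHHS22} to prove local well-posedness.

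Finally, $F(\cdot,0)\in L^2_{\loc}(\R_+;V^*)$ and $G(\cdot,0)\in L^2_{\loc}(\R_+;\UH)$ are direct consequences of the integrability of $\fv(\cdot,0)$, $\ft(\cdot,0)$, $(\gvn(\cdot,0))_n$, and $(\gtn(\cdot,0))_n$ postulated in Assumption \ref{it:nonlinearities_strong_weak}, combined with the trivial embedding $\Ls^2(\O)\hookrightarrow V^*$ and $L^2(\O)\hookrightarrow H^{-1}(\O)$. Putting all these estimates together yields the conditions required by Assumption \ref{it:growth AB}, and the remaining integrability-at-zero statements.
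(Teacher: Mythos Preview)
Your proposal is correct and follows essentially the same approach as the paper: both verify boundedness of $A_0,B_0$ directly from the assumptions and Remark~\ref{rem:assumptionslocal}, and both defer the local Lipschitz estimates for $F,G$ to the computations already carried out in \cite{AHHS22}. The paper is slightly more precise in that it cites \cite[\S5.1]{AHHS22} and records the exact interpolation indices $\beta_j\in\{3/4,\,3/5,\,4/5,\,2/3\}$ and growth exponents $\rho_j\in\{1,\,4,\,2/3,\,2\}$ (each pair hitting the critical constraint $2\beta_j=1+\tfrac{1}{1+\rho_j}$), whereas your sketch leaves these implicit; also, for $F(\cdot,0),G(\cdot,0)$ the paper invokes Assumption~\ref{it:sublinearity_Gforce} while you use the equally valid (and arguably more direct) Assumption~\ref{it:nonlinearities_strong_weak}.
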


\begin{proof}
Making use of Assumption \ref{it:sublinearity_Gforce}, one readily checks that $F(\cdot,0)\in  L^2_\loc(\R_+;V^*) $ and $G(\cdot,0)\in L^2_\loc(\R_+;\UH)$. 
Moreover, note that $A_0(t,\cdot) \in\mathcal{L}(V,V^*)$
 (using \eqref{eq:weak robin pairing}), and 
 $B_0 \in\mathcal{L}(V,\mathcal{L}_2(\ell^2;H))$ (using Remark \ref{rem:assumptionslocal} and Assumption \ref{it:well_posedness_primitive_L_infty_bound}), with bounds that are uniform over $t\in[0,T]$.
In particular, the growth and Lipschitz condition for $A_0$ and $B_0$ are satisfied. 

For the terms $F$ and $G$, we use \cite[\S5.1]{AHHS22}.  
Note that in the notation \cite[\S5.1]{AHHS22}, we have $V^*=X_0$, $H=X_{1/2}$, $V=X_1$, $F= F_1+F_2+F_3$, $G=G$,   and in the notation of \cite[Assumption H, Th.\ 4.8]{AV22nonlinear1}, $F=0+F_c+0$, $G=0+G_c+0$. So by \cite[\S5.1]{AHHS22}, we have
\begin{align*}
  &  \|F_1(\cdot,(v,\theta))-F_1(\cdot,(v',\theta'))\|_{V^*}
    \lesssim (\|(v,\theta)\|_{V_{3/4}}+\|(v',\theta')\|_{V_{3/4}})\|(v,\theta)-(v',\theta')\|_{V_{3/4}},\\
   & \|F_2(\cdot,(v,\theta))-F_2(\cdot,(v',\theta'))\|_{V^*}
    \lesssim (1+\|(v,\theta)\|_{V_{3/5}}^4+\|(v',\theta')\|_{V_{3/5}}^4)\|(v,\theta)-(v',\theta')\|_{V_{3/5}}\\
 &   \quad+ (1+\|(v,\theta)\|_{V_{4/5}}^{2/3}+\|(v',\theta')\|_{V_{4/5}}^{2/3})\|(v,\theta)-(v',\theta')\|_{V_{4/5}},
\end{align*}
\begin{align*}
    \|F_3(\cdot,(v,\theta))-F_3(\cdot,(v',\theta'))\|_{V^*}+&\|G(\cdot,(v,\theta))-G(\cdot,(v',\theta'))\|_{\mathcal{L}_2(\ell^2;H)}\\
    &\,\lesssim (1+\|(v,\theta)\|_{V_{3/5}}^4+\|(v',\theta')\|_{V_{3/5}}^4)\|(v,\theta)-(v',\theta')\|_{V_{3/5}}\\
    &\quad+(1+\|(v,\theta)\|_{V_{2/3}}^2+\|(v',\theta')\|_{V_{2/3}}^2)\|(v,\theta)-(v',\theta')\|_{V_{2/3}}.
\end{align*}  
\end{proof}

In Section \ref{sec:skeleton}, we will verify Assumption \ref{ass:coer replace}\ref{it:2} for the stochastic primitive equations, by proving the following proposition. 

\begin{proposition}\label{prop:skeleton}
Let $(v_0,\theta_0)\in \H^1(\O)\times L^2(\O)$, let $T>0$ and let $\varphi\in L^2(0,T;\ell^2)$. Suppose that $(v,\theta)$ is an $L^2$-strong-weak solution to the corresponding skeleton equation \eqref{eq:primitive_skeleton} on $[0,T]$. Then:  
\begin{equation*}
   \|(v,\theta)\|_{\MR(0,{T})}   \leq C_{(v_0,\theta_0)}({T},\|\varphi\|_{L^2(0,{T};U)}),
  \end{equation*}
   with $C_{(v_0,\theta_0)}\col \R_+\times \R_+\to\R_+$ a function that is non-decreasing in both components and with  $\MR(0,T)$ defined by \eqref{eq: def MR space}. 
\end{proposition}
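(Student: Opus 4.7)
The plan is to adapt the energy estimates that underlie the global well-posedness of the 3D stochastic primitive equations in \cite{AHHS22,AHHS25nonisothermal} to the deterministic skeleton equation~\eqref{eq:primitive_skeleton}, treating every term in which $\varphi$ appears as an inhomogeneous forcing that can be absorbed into a Grönwall inequality using $\varphi\in L^2(0,T;\ell^2)$. Existence and uniqueness of $(v,\theta)$ is provided by Theorem~\ref{th:LDP general}/\cite{T25L2LDP} once Assumption~\ref{ass:critvarsettinglocal} is in place, so the substantive content is the a priori bound itself, which I would carry out in three tiers.

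\textbf{Tier I ($L^\infty L^2$ and $L^2 H^1$ control of $\theta$).} Test the $\theta$-equation against $\theta$ in $L^2(\O)$. The transport nonlinearities $(v\cdot\nabla_{\h})\theta$ and $w(v)\partial_3\theta$ vanish under integration by parts thanks to $\nabla_{\h}\cdot v+\partial_3 w(v)=0$ and the Neumann-type conditions; the Robin Laplacian contributes $\|\nabla\theta\|_{L^2}^2$ up to a trace correction (as in \eqref{eq: est theta robin laplace}); the term $\sum_n (\psi_n\cdot\nabla)\theta\,\varphi_n$ is estimated, using divergence-free $\psi_n$ and Cauchy--Schwarz in $n$, by $\|\varphi\|_{\ell^2}\|\theta\|_{L^2}\|\nabla\theta\|_{L^2}$, which Young absorbs into $\|\nabla\theta\|_{L^2}^2$ and a $\|\varphi\|_{\ell^2}^2\|\theta\|_{L^2}^2$ remainder. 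The $F_\theta$ and $G_\theta$ contributions are controlled by Assumption~\ref{it:sublinearity_Gforce} and a standard splitting. Grönwall, with the linear forcing $\|\varphi\|_{\ell^2}^2$, yields
\[
\|\theta\|_{L^\infty(0,T;L^2)}^2+\|\theta\|_{L^2(0,T;H^1)}^2\lesssim \big(1+\|\theta_0\|_{L^2}^2+\|v\|_{L^2(0,T;H^1)}^2\big)\exp\!\big(C(1+\|\varphi\|_{L^2(0,T;\ell^2)}^2)\big).
\]

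\textbf{Tier II ($L^\infty H^1$ and $L^2 H^2$ control of $v$).} This is the hard part. I would follow the Cao--Titi splitting $v=\bar v+\widetilde v$ into barotropic and baroclinic modes and write separate equations for them. Assumption~\ref{it:independence_z_variable} (independence of $\phi_n^1,\phi_n^2$ from $x_3$) is essential here: it guarantees that $\hhp[(\phi_n\cdot\nabla)v]$ decomposes cleanly along the $\bar v$/$\widetilde v$ splitting, and in particular $(\phi_n\cdot\nabla)\bar v$ is horizontal and stays in the barotropic subspace. For $\bar v$ one performs an $L^2$ energy estimate; for $\widetilde v$ one tests with $|\widetilde v|^2\widetilde v$ to obtain an $L^4$ bound, then closes an $H^1$ estimate via the cascade of anisotropic Sobolev/Ladyzhenskaya inequalities as in \cite{CT07,AHHS22}. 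The transport-noise contributions along $\varphi$ give, after Cauchy--Schwarz in $n$, Young and the parabolicity of Assumption~\ref{it:well_posedness_primitive_parabolicity}, terms of the form $\|\varphi\|_{\ell^2}^2(1+\|v\|_{H^1}^2)$ on the right-hand side of the Grönwall inequality; the $\PP_\gamma$ and $G_v$ contributions are handled via \cite[(4.26)]{AHHS22} together with Assumption~\ref{it:sublinearity_Gforce}. The coupling with $\theta$ enters through $\mathcal J_\kappa\theta$ and is controlled by Tier~I via $\|\theta\|_{L^2 H^1}$.

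\textbf{Tier III (closing the loop and building $C_{(v_0,\theta_0)}$).} The bounds from Tiers I and II are coupled through $\|v\|_{L^2 H^2}$ and $\|\theta\|_{L^2 H^1}$; I would combine them into a single system of differential/integral inequalities and close via a Grönwall argument whose exponential factor is a function of $\|\varphi\|_{L^2(0,T;\ell^2)}^2$ and $T$. Because everything is deterministic, there are no probabilistic obstructions of the form~\eqref{eq:bound_Om_weak} to overcome: the closed bound immediately defines a function $C_{(v_0,\theta_0)}(T,\|\varphi\|_{L^2(0,T;\ell^2)})$ that is non-decreasing in each argument, as required.

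The main obstacle is Tier II: unlike in \cite{Slavik,BS21}, where transport noise enters only through the barotropic mode and the higher-order estimate for $\widetilde v$ is noise-free, here $\phi_n$ acts on all of $v$ and hence produces genuine transport terms in the $\widetilde v$-equation. Handling these while preserving the subtle $L^4$-$L^6$ cancellations of Cao--Titi, and while only having $B(v,\theta)\varphi\in L^1(0,T;H)$ rather than $L^2(0,T;V^*)$, is the delicate point and the reason Assumption~\ref{it:independence_z_variable} is crucially invoked.
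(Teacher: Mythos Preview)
Your overall plan---energy estimates for $\theta$, Cao--Titi barotropic/baroclinic splitting for $v$, $L^4$ estimate for $\tilde v$, Grönwall with the $\|\varphi\|_{\ell^2}^2$ factor, and the identification of Assumption~\ref{it:independence_z_variable} as the crucial structural hypothesis---matches the paper's proof. A few points in your staging need adjustment, however.

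First, your Tier~I treats only $\theta$, producing a bound that still depends on $\|v\|_{L^2(0,T;H^1)}$; you then defer resolving this to Tier~III. The paper avoids this circularity by carrying out a \emph{joint} $L^2$ energy estimate for the pair $(v,\theta)$ at the outset (Subsection~\ref{sub:skeleton L2}), which closes on its own via Grönwall and yields $\|v\|_{L^\infty L^2\cap L^2 H^1}+\|\theta\|_{L^\infty L^2\cap L^2 H^1}$ as a fixed constant. This is not a cosmetic difference: the Cao--Titi intermediate estimates need $\|v\|_{L^2}^2\|\nabla v\|_{L^2}^2$ inside the Grönwall exponent (see \eqref{eq: Gronwall prep intermed}), so Tier~II cannot be executed without the $L^2$-level bound on $v$ already in hand.

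Second, your Tier~II is missing two pieces of the intermediate cascade. For $\bar v$ the paper performs an $H^1(\Tor^2)$ estimate (not $L^2$), yielding $\|\bar v\|_{L^\infty H^1(\Tor^2)\cap L^2 H^2(\Tor^2)}$; this higher regularity is what is used to bound $(\bar v\cdot\nabla_{\h})\tilde v$ and $(\tilde v\cdot\nabla_{\h})\bar v$ in the final $H^1$ step (Subsection~\ref{sub:skeleton H1}). In addition, the paper simultaneously derives the bound $\|\partial_3 v\|_{L^\infty L^2\cap L^2 H^1}$ (see \eqref{eq: Ito pz v}--\eqref{eq:pzz est}), which is indispensable for controlling $w(v)\partial_3 v$ in the $H^1$ closure. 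Both should be part of your Tier~II; the three quantities $\|\bar v\|_{H^1(\Tor^2)}^2$, $\|\partial_3 v\|_{L^2(\O)}^2$, $\|\tilde v\|_{L^4(\O)}^4$ are estimated together and closed by a single Grönwall, as in \eqref{eq:skeleton intermed est}.

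Finally, a minor point: you invoke $\nabla\cdot\psi_n=0$, but that is not part of Assumption~\ref{ass:well_posedness_primitive}. The paper handles the $\sum_n(\psi_n\cdot\nabla)\theta\,\varphi_n$ term using only the $\ell^2$-bound of Assumption~\ref{it:well_posedness_primitive_L_infty_bound} and Young's inequality, arriving at the same absorbable form you wrote.
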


Then, Section \ref{sec:tiltedspde} will contain  the verification of  Assumption \ref{ass:coer replace}\ref{it:3}:  

\begin{proposition}\label{prop:tilt}
Let $(v_0,\theta_0)\in \H^1(\O)\times L^2(\O)$ and let $T>0$. 
Suppose that $(\bphieps)_{\eps>0}$ is a collection of predictable stochastic processes and suppose that $K\geq 0$ is such that for each $\eps>0$, we have 
\begin{equation}\label{eq: as bdd K}
    \|\bphieps\|_{L^2(0,T;U)}\leq K  \text{ a.s.}
\end{equation}
For all $\eps\in(0,1]$, there exists a unique  $L^2$-strong-weak solution   $X^\eps=(v^\eps,\theta^\eps)$ to \eqref{eq:SPDE tilted} corresponding to $\bphieps$, and it holds that 
\[
\lim_{\gamma\to\infty}\sup_{\eps\in(0,\eps_0)}\P(\|X^\eps\|_{\MR(0,T)}>\gamma)=0,
\]
for some $\eps_0\in(0,1)$ that is independent of $T$, $K$ and $(\bphieps)$, and with 
$\MR(0,T)$ defined by \eqref{eq: def MR space}. 
\end{proposition}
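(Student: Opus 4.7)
The approach follows the two-part strategy outlined in Subsection \ref{ss:strategy}: first absorb the tilt $B(\cdot,X^\eps)\bphieps$ as a lower-order drift to obtain well-posedness, then derive an It\^o energy estimate that closes only in probability via a stochastic Gr\"onwall argument, without appealing to coercivity in $\Omega$.

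For well-posedness, I would observe that $B=B_0+G$ with $B_0\in\mathcal{L}(V,\UH)$ uniformly in time and $G$ Lipschitz with the sublinear growth of Assumption \ref{ass:global_primitive} (Lemma \ref{lem:lip}). H\"older's inequality together with $\|\bphieps\|_{L^2(0,T;\ell^2)}\le K$ a.s.\ then gives $B(X^\eps)\bphieps\in L^1(0,T;H)$ whenever $X^\eps\in L^2(0,T;V)$, so the tilt can be treated as a progressively measurable, lower-order perturbation of the nonlinear drift $F$ in \eqref{eq: defs coeff}. Global existence and uniqueness of $X^\eps$ as an $L^2$-strong-weak solution then follow by adapting Theorem \ref{th:SPDE well posed} (and \cite[Th.~3.6]{AHHS22}) to this enlarged class of coefficients, together with the a priori bound proved below.

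For the a priori estimate, I would apply It\^o's formula to $\|X^\eps\|_H^2$ with the inner product \eqref{eq:inner prod defs}. Using Lemma \ref{lem:coer} for the linear part and the barotropic/baroclinic cancellations of \cite{AHHS22,AHHS25nonisothermal} for the convective terms $\hhp[(v\cdot\nabla_\h)v+w(v)\pz v]$ and $(v\cdot\nabla_\h)\theta+w(v)\pz\theta$, one arrives schematically at
\[
d\|X^\eps\|_H^2+\hat\nu\|X^\eps\|_V^2\,dt\le \bigl[C_1+C_2\|\bphieps\|_{\ell^2}^2\bigr]\bigl(1+\|X^\eps\|_H^2\bigr)\,dt+\Psi(X^\eps)\,dt+d\mathcal{M}_t,
\]
where $\Psi$ collects the nonlinear contributions that can only be tamed through the weighted log-log mechanism underlying \eqref{eq:bound_Om_weak}, and $\mathcal{M}$ is a local martingale whose quadratic variation is $\eps\|B(X^\eps)\|_{\UH}^2$. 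Choosing $\eps_0$ small enough ensures this martingale contribution does not spoil the coercivity. The tilt itself is controlled by
\[
|(X^\eps,B(X^\eps)\bphieps)_H|\le\tfrac{\hat\nu}{2}\|X^\eps\|_V^2+C(1+\|\bphieps\|_{\ell^2}^2)\|X^\eps\|_H^2+C\|\bphieps\|_{\ell^2}^2.
\]
Since $\int_0^T\|\bphieps\|_{\ell^2}^2\,dt\le K^2$ a.s., the Gr\"onwall factor $\exp(CT+CK^2)$ is \emph{deterministic} and uniform in $\eps$ and in $\bphieps$. Stopping at exit times from large $H$-balls and invoking a Scheutzow-type stochastic Gr\"onwall lemma (as in \cite{AHHS22,AHHS25nonisothermal}) then yields $\sup_{\eps\in(0,\eps_0)}\P(\|X^\eps\|_{\MR(0,T)}>\gamma)\lesssim_{T,K} 1/\log\log\gamma$, which vanishes as $\gamma\to\infty$.

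The main obstacle will be verifying that the weighted energy mechanism designed in \cite{AHHS22,AHHS25nonisothermal} for the untilted equations remains compatible with the extra drift $B(X^\eps)\bphieps$. Concretely, the exponential weights used to dominate $\Psi$ must stay deterministically bounded along the tilted flow; this works thanks to the a.s.\ bound $\|\bphieps\|_{L^2(0,T;\ell^2)}\le K$, but the compatibility must be tracked term by term through the It\^o expansion of the weighted energy. The interplay between the log-log bookkeeping of the primitive-equation nonlinearities and the $L^2(0,T;\ell^2)$-bound on $\bphieps$ is where the argument genuinely has to work harder than in a coercive variational setting.
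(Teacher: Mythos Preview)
Your proposal has a genuine gap in the a priori estimate. You propose a single It\^o expansion of $\|X^\eps\|_H^2$ with $H=\Hs^1(\O)\times L^2(\O)$, relying on Lemma~\ref{lem:coer} and unspecified ``barotropic/baroclinic cancellations'' to control the convective terms. But the cancellation $\int_\O [(u\cdot\nabla)v]\cdot v\,dx=0$ holds only at the $L^2$ level; in the $H^1$ pairing the convective terms produce $\|(v\cdot\nabla_\h)v\|_{L^2(\O)}^2$ and $\|w(v)\pz v\|_{L^2(\O)}^2$ after Young's inequality (cf.\ \eqref{eq:skeleton H1}), and these cannot be absorbed into $\|X^\eps\|_H^2$ or a small multiple of $\|X^\eps\|_V^2$ by a single Gr\"onwall argument. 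Your term $\Psi(X^\eps)$ hides precisely this difficulty, and the ``log-log mechanism'' you invoke is not a black-box tool: it is the \emph{outcome} of a three-tier hierarchy of estimates, each feeding into the random Gr\"onwall coefficient of the next.

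The paper's proof (Section~\ref{sec:tiltedspde}) implements this hierarchy explicitly: first an $L^2$-energy estimate on the weaker Gelfand triple (Subsection~\ref{sub:tilt L2}), then a coupled It\^o formula for $\|\bar{v}^\eps\|_{H^1(\Tor^2)}^2+\|\pz v^\eps\|_{L^2(\O)}^2+c\|\tilde{v}^\eps\|_{L^4(\O)}^4$ on the split system \eqref{eq:primitive tilde and bar tilt} (Subsection~\ref{sub:tilt intermed}), and only then the $H^1(\O)$-estimate for $v^\eps$ (Subsection~\ref{sub:tilt H1}), whose Gr\"onwall coefficient $b^\eps$ involves $\|\bar{v}^\eps\|_{H^1(\Tor^2)}^2$, $\|\pz v^\eps\|_{H^1(\O)}^2$ and $\hat Z_{v^\eps}$ from the intermediate step. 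At each tier a stochastic Gr\"onwall lemma is applied with a random coefficient controlled only in probability by the previous tier; this iteration is what produces the weak tail bounds. Your schematic inequality collapses the hierarchy and offers no mechanism to control $\|(v^\eps\cdot\nabla_\h)v^\eps\|_{L^2}^2$ or $\|w(v^\eps)\pz v^\eps\|_{L^2}^2$ without the intermediate quantities $\||\tilde v^\eps||\nabla\tilde v^\eps|\|_{L^2}^2$, $\|\bar v^\eps\|_{H^2(\Tor^2)}^2$, $\|\tilde v^\eps\|_{L^4}^4$. As a minor point, for well-posedness the paper simply invokes Girsanov (Remark~\ref{rem: tilt well posed}) rather than absorbing the tilt into the drift and re-proving global existence.
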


\begin{remark}\label{rem: tilt well posed}
  The fact that for all $\eps\in(0,1]$, \eqref{eq:SPDE tilted} has a unique $L^2$-strong-weak solution follows from Theorem \ref{th:SPDE well posed} and Girsanov's theorem. For details, see the argument in \cite[Lem.\ 4.1]{T25L2LDP}. 
\end{remark} 

Now taking for granted the above results,  we are already able to give the proof of our main result: the large deviation principle for the stochastic primitive equations with transport noise.  

\begin{proof}[\textbf{Proof of Theorem \ref{th:PE LDP}}]
Recall the reformulation \eqref{reformulation}.  
We verify that Assumptions \ref{ass:critvarsettinglocal} and \ref{ass:coer replace} are  satisfied for $(A,B)$ defined by \eqref{eq:defAB}, and with $\eps_0\in(0, 1)$ from Proposition \ref{prop:tilt}. Then the result follows directly from   Theorem \ref{th:LDP general}. 

Assumption \ref{ass:critvarsettinglocal} is satisfied by Lemmas \ref{lem:coer},  \ref{lem:lip} and Remark \ref{rem:growth from lip}. 
Furthermore, \ref{ass:coer replace}\ref{it:1} follows from Theorem \ref{th:SPDE well posed}, using that $\eps\in(0,\eps_0)\subset(0,1)$. 
Finally, \ref{ass:coer replace}\ref{it:2} follows from Proposition  \ref{prop:skeleton} and \ref{ass:coer replace}\ref{it:3} follows from Proposition \ref{prop:tilt}. 
Thus  Theorem \ref{th:LDP general} yields the claimed LDP.
\end{proof}

\section{Skeleton equation}\label{sec:skeleton}

In this section, we prove Proposition \ref{prop:skeleton}. That is, we prove an a priori estimate for the skeleton equation \eqref{eq:primitive_skeleton} for the stochastic primitive equations. 

The proof of Proposition \ref{prop:skeleton} will be executed in three substantial steps, which are distributed over the next  three subsections. 
Eventually, we need to obtain an energy estimate in the $\MR(0,T)$-norm (see \eqref{eq: def MR space}), which corresponds to the analytically strong setting for $v$. 
To do so, we use a combination of estimates similar to those in the celebrated work of \cite{CT07}, and we use the proof structure of \cite{HH20,AHHS22}.
\begin{itemize}
    \item Subsection \ref{sub:skeleton L2}: proof of an energy estimate in the analytically weak setting, i.e.\   in the norm of $L^2(0,T;\H^1(\O)\times L^2(\O))\cap C([0,T];\mathbb{L}^2(\O)\times L^2(\O))$. 
    \item Subsection \ref{sub:skeleton intermed}: intermediate estimates for a coupled system $(\bar{v},\tilde{v})$ that  uniquely determines $v$, containing analytically strong estimates for $\bar{v}$ and $L^4$-estimates for $\tilde{v}$. 
    \item Subsection \ref{sub:skeleton H1}: derivation of the stated $\MR(0,T)$-estimate.
\end{itemize}

Throughout the remainder of this section, we assume that  $(v_0,\theta_0)\in \H^1(\O)\times L^2(\O)$,  $T>0$ and   $\varphi\in L^2(0,T;\ell^2)$ are given, and  as in the statement of Proposition \ref{prop:skeleton}, we let $(v,\theta)$ be an $L^2$-strong-weak solution to \eqref{eq:primitive_skeleton} on $[0,T]$.

\subsection{$L^2$-estimates for the horizontal skeleton velocity and temperature}\label{sub:skeleton L2} 
The goal of this subsection is to prove an estimate:
\begin{align}\label{eq:skeleton L^2 est prep}
\begin{split}
\|v(t)\|_{{L}^2(\O)}^2+&\|\theta(t)\|_{{L}^2(\O)}^2
-\|v_0\|_{L^2(\O)}^2-\|\theta_0\|_{L^2(\O)}^2 \\
&\quad\lesssim R_t+\int_0^t \big(1+\|\varphi(s)\|_{\ell^2}^2\big)\Big(\|v(s)\|_{{L}^2(\O)}^2+\|\theta(s)\|_{{L}^2(\O)}^2\Big)\dd s\\
&\qquad-\alpha\Big[\|\nabla v\|_{L^2(0,t;{L}^2(\O))}^2+ \|\nabla \theta\|_{L^2(0,t;{L}^2(\O))}^2\Big],
\end{split}
\end{align}
for a constant $R_t\geq 0$ non-decreasing in $t$ and a constant  $ \alpha>0$. Then, Gr\"onwall's inequality directly yields for all $t\in[0,T]$:
\begin{align}\label{eq:skeleton L^2 est}
\begin{split}
\|v(t)\|_{{L}^2(\O)}^2+\|\theta(t)\|_{{L}^2(\O)}^2+ &\|v\|_{L^2(0,t;H^1(\O))}^2
 +\|\theta \|_{L^2(0,t;H^1(\O;\R))}^2\\&\lesssim R_TC_{v_0,\theta_0}\exp\big(c_1(T+\|\varphi\|_{L^2(0,T;\ell^2)}^2)\big) 
 \eqqc {C}_{1,v_0,\theta_0,T,\varphi}
\end{split}
\end{align} 
with a constant $c_1>0$ independent of $\varphi$, $T$, $v$ and $\theta$.  

Recall \eqref{eq: defs coeff} for the definition of $A$ and $B$. 
By the  chain rule of \cite[Lem.\ 2.2 p.\ 30]{pardoux} for $L^2(0, T ; V^*)+L^1(0, T ; H)$-integrands (see also \cite[ Lem.\ A.2]{TV24}), applied with Gelfand triple $(\mathbb{H}^1_{\n}(\O)\times H^1(\O),\mathbb{L}^2(\O)\times L^2(\O),\mathbb{H}^{-1}(\O)\times H^{-1}(\O))$,  
we have for  all $t\in[0,T]$:  
\begin{align}
    \frac{1}{2}\|v(t)\|_{{L}^2(\O)}^2-\frac{1}{2}\|v_0\|_{{L}^2(\O)}^2 
    &=\int_0^t \int_\O \Delta v(s)\cdot v(s)\dd x\dd s -\int_0^t \int_\O \hhp[(u(s)\cdot \nabla)v(s)]\cdot v(s)\dd x\dd s\notag\\ 
    &\quad+\int_0^t \int_\O \hhp[\PP_\gamma(\cdot,v)]\cdot v(s)\dd x\dd s+\int_0^t \int_\O \hhp[\mathcal{J}_\kappa{\theta}(s)(s,x)]\cdot v(s)\dd x\dd s\notag\\ 
    &\quad+\int_0^t \int_\O \hhp[F_v(\cdot,v,\theta,\nabla v)]\cdot v(s)\dd x\dd s\notag\\
    &\quad+ \int_0^t \int_\O \sum_{n\in\N}\varphi_n(s)\hhp[(\phi_n\cdot\nabla)v(s)]\cdot v(s) \dd x \dd s\notag\\
    &\quad+\int_0^t \int_\O\sum_{n\in\N}\varphi_n(s)\hhp[G_{v,n}(s,v(s))]\cdot v(s)\dd x\dd s\notag\\
    &\eqqc I_1^{\scriptscriptstyle L^2}(t,v,\theta,\varphi), \label{eq:L^2 norm v}
\end{align} 
and
\begin{align}
    \frac{1}{2}\|\theta(t)\|_{{L}^2(\O)}^2-\frac{1}{2}\|\theta_0\|_{{L}^2(\O)}^2 
    &=\int_0^t \<\theta,\DeltawR\theta\?\dd s -\int_0^t \int_\O [(u(s)\cdot\nabla)\theta(s)] \theta(s)\dd x\dd s \notag\\  
    &\quad+\int_0^t \int_\O F_{\theta}(s,v(s),\theta(s),\nabla v(s)) \theta(s)\dd x\dd s\notag\\  
    &\quad+ \int_0^t \int_\O \sum_{n\in\N}\varphi_n(s)[(\psi_n\cdot\nabla)\theta(s)] \theta(s) \dd x \dd s\notag\\  
    &\quad+\int_0^t \int_\O\sum_{n\in\N}\varphi_n(s)G_{\theta,n}(s,v(s),\theta(s),\nabla v(s)) \theta(s)\dd x\dd s\notag\\
    &\eqqc I_2^{\scriptscriptstyle L^2}(t,v,\theta,\varphi). \label{eq:L^2 norm theta}
\end{align} 
We estimate the terms constituting \eqref{eq:L^2 norm v} and \eqref{eq:L^2 norm theta} in order of appearance. 
The upcoming estimates hold pointwise in $s$  for $s\in[0,t]$, thus below we omit writing the time input $s$ in the integrands. By the Neumann boundary conditions for $v$, we  have  
\[
\int_\O \Delta v\cdot v\dd x=-\|\nabla v\|_{{L}^2(\O)}^2.
\]
Moreover, $v\in \mathbb{H}^2_{\n}\subset \mathrm{Ran}(\hhp)$ and $\hhp=\hhp^*$, hence
\begin{align*}
\int_\O \hhp[(u \cdot \nabla)v]\cdot v \dd x&= \int_\O [(u \cdot \nabla)v]\cdot \hhp v\dd x= \int_\O [(u\cdot \nabla)v]\cdot  v\dd x=0.
\end{align*}
For the last equality, note that by the boundary conditions  we have $\int_\O u_j\partial_j(v_k^2)\dd x=\frac{1}{2}  \int_\O u_j\partial_j(v_k^2)\dd x$ for $k\in\{1,2\}$ and  $j\in\{1,2,3\}$, thus  
\begin{align*}
\int_\O[(u\cdot \nabla)v]\cdot v\dd x  
&=\frac{1}{2} \sum_{k=1}^2\int_\O u\cdot(\nabla v_k^2)\dd x
=\frac{1}{2} \sum_{k=1}^2\Big(-\int_\O (\nabla\cdot u)v_k^2\dd x +\int_{\partial\O} (v_k^2u)\cdot\mathbf{n}\dd \Gamma \Big)=0.
\end{align*} 
Here, the last expression is zero since $u$ is divergence free and the  surface integral vanishes on $\partial \mathbb{T}^2\times(-h,0)$ by periodicity of $u=(v,w)$ and it vanishes on $\mathbb{T}^2\times\{-h,0\}$  
by the boundary condition \cite[(1.3)]{AHHS22}. 
Next,  we have by \cite[Rem.\ 4.3]{AHHS22}:
\begin{align*}
    \Big|\int_\O \hhp[\PP_\gamma(\cdot,v)]\cdot v\dd x\Big|&=\Big|\int_\O [\PP_{\gamma,\phi}(\cdot,v)+\PP_{\gamma,G}(\cdot,v)]\cdot v\dd x\Big|\\
    &\leq \eps\|\nabla v\|_{L^2(\O)}^2+C_{\eps}\|v\|_{L^2(\O)}^2+\int_\O|\PP_{\gamma,G}(\cdot,v)||v|\dd x. 
\end{align*}
Let $\rho\in(2,6)$ satisfy $\frac{1}{3+\delta}+\frac{1}{2}+\frac{1}{\rho}=1$. In particular, $H^1(\O;\R^2)\into L^\rho(\O;\R^2)$. Using the latter, together with H\"older's inequality,  Assumption \ref{it:well_posedness_primitive_phi_smoothness}, Assumption \ref{it:sublinearity_Gforce}  and Young's inequality, we can   estimate further: 
\begin{align*}
    \int_\O|\PP_{\gamma,G}(\cdot,v)||v|\dd x &\leq \sum_{l,m=1}^2 \int_\O\|(\gamma_n^{l,m})_n\|_{\ell^2}\|(\Q[G_{v,n}(\cdot,v)])_n\|_{\ell^2}|v|\dd x\\ 
    &\lesssim \sum_{l,m=1}^2 \int_\O\|(\gamma_n^{l,m})_n\|_{\ell^2}(|\Xi|+|v|)|v|\dd x\\
    &\leq \sum_{l,m=1}^2\|(\gamma_n^{l,m})_n\|_{L^{3+\delta}(\O;\ell^2)}(\|\Xi\|_{L^2(\O)}+\|v\|_{L^2(\O)})\|v\|_{L^\rho(\O)}\\  
    &\lesssim C_\eps\|\Xi\|_{L^2(\O)}^2+C_\eps\|v\|_{L^2(\O)}^2+\eps \|\nabla v\|_{L^2(\O)}^2, 
  \end{align*}
  where we recall that $\Q$ (see Subsection \ref{sub:prelim}) is a projection, thus $\|\Q\|= 1$.  
For the next term, we use integration by parts,  Young's inequality, the Cauchy--Schwarz inequality and Assumption \ref{it:well_posedness_primitive_kone_smoothness}: 
\begin{align*}
    \Big|\int_\O \hhp\big[\mathcal{J}_\kappa {\theta}(\cdot,x)\big]\cdot v \dd x\Big| 
    &=\Big|\int_\O \Big(\int_{-h}^{x_3} \kappa(\cdot,\zeta)\theta(\cdot,\zeta) \dd\zeta\Big) \big(\nabla_H \cdot v\big) \dd x \Big|\\
    &\leq \int_\O C_\eps\Big|\int_{-h}^{0} \kappa(\cdot,\zeta)\theta(\cdot,\zeta) \dd\zeta\Big|^2+\eps |\nabla v|^2  \dd x \\
    &\leq hC_\eps \int_{\mathbb{T}^2}\|\kappa(\cdot)\|_{L^2(-h,0)}^2\|\theta(\cdot)\|_{L^2(-h,0)}^2\dd x_H+\eps \|\nabla v\|_{L^2(\O)}^2 \\
    &\lesssim  C_\eps  \|\theta(\cdot)\|_{L^2(\O)}^2 +\eps \|\nabla v\|_{L^2(\O)}^2.
\end{align*}
By Assumption \ref{it:sublinearity_Gforce} and Young's inequality, we have
\begin{align*}
    \Big|\int_\O \hhp[F_v(\cdot,v,\theta,\nabla v)]\cdot v\dd x\Big|&\leq \int_\O |F_v(\cdot,v,\theta,\nabla v)|| v|\dd x\\
    &\lesssim \|\Xi\|_{L^2(\O)}^2+C_\eps\|v\|_{L^2(\O)}^2+\|\theta\|_{L^2(\O)}^2+\eps\|\nabla v\|_{L^2(\O)}^2.
\end{align*}
By Young's inequality and Remark \ref{rem:assumptionslocal}, we have  
\begin{align*}
    \Big|\int_\O \Big(\sum_{n\in\N}\varphi_n \hhp[(\phi_n\cdot\nabla)v ]\Big)\cdot v \dd x\Big| &\leq \int_\O \|\varphi\|_{\ell^2} \|((\phi_n\cdot\nabla)v )_n\|_{\ell^2}|v| \dd x\\
    &\leq  
    C_\eps \|\varphi\|_{\ell^2}^2 \|v\|_{L^2(\O)}^2+ \eps\|((\phi_n\cdot\nabla)v )_n\|_{L^2(\O;\ell^2)}^2\\
    &\lesssim C_\eps \|\varphi\|_{\ell^2}^2 \|v\|_{L^2(\O)}^2+ \eps\|\nabla v\|_{L^2(\O)}^2.
\end{align*}  
Similarly, using Assumption \ref{it:sublinearity_Gforce},
\begin{align*}
\Big|\int_\O\big(\sum_{n\in\N}\varphi_n \hhp[G_{v,n}(\cdot,v)]\big)\cdot v\dd x\Big|
&\leq \frac{1}{2} \|\varphi\|_{\ell^2}^2 \|v\|_{L^2(\O)}^2+C^2\big(\|\Xi\|_{L^2(\O)}^2+ \|v\|_{L^2(\O)}^2\big),
\end{align*} 
concluding the estimates for the terms of \eqref{eq:L^2 norm v}.

For the first term  of \eqref{eq:L^2 norm theta}, we have by \eqref{eq: est theta robin laplace}: 
\begin{align*}
\<\theta,\DeltawR\theta\?
&\leq -\|\nabla \theta\|_{{L}^2(\O)}^2+\eps\|\nabla \theta\|_{{L}^2(\O)}^2+C_\eps \|\theta\|_{{L}^2(\O)}^2
, 
\end{align*}
and for the second term  of \eqref{eq:L^2 norm theta}, we have similarly to that of \eqref{eq:L^2 norm v}:
\[
\int_\O [(u\cdot\nabla)\theta)] \theta\dd x=0.
\]

Mimicking the estimates with $F_v$, $\hhp[(\phi_n\cdot\nabla)v ]$ and $G_{v,n}$ above, and using Assumptions \ref{it:sublinearity_Gforce}, \ref{it:well_posedness_primitive_parabolicity} and Young's inequality, one finds 
\begin{align*}
&\Big|\int_\O F_{\theta}(\cdot,v,\theta,\nabla v) \theta\dd x\Big|  \lesssim \|\Xi\|_{L^2(\O)}^2+\|v\|_{L^2(\O)}^2+C_\eps\|\theta\|_{L^2(\O)}^2+\eps\|\nabla v\|_{L^2(\O)}^2,\\
&\Big|\int_\O \sum_{n\in\N}\varphi_n[(\psi_n\cdot\nabla)\theta] \theta \dd x\Big|\leq \frac{1}{2} \|\varphi\|_{\ell^2}^2 \|\theta\|_{L^2(\O)}^2+ \frac{1}{2}\nu\|\nabla \theta\|_{L^2(\O)}^2,\\
&\Big|\int_\O\sum_{n\in\N}\varphi_n(s)G_{\theta,n}(\cdot,v,\theta,\nabla v) \theta\dd x\Big|\lesssim C_\eps \|\varphi\|_{\ell^2}^2 \|\theta\|_{L^2(\O)}^2\\
&\hspace{5cm}+ \eps\big(\|\Xi\|_{L^2(\O)}^2+ \|v\|_{L^2(\O)}^2+\|\theta\|_{L^2(\O)}^2+\|\nabla v\|_{L^2(\O)}^2\big).
\end{align*} 

Combining all above estimates, recalling that $\nu\in(0,2)$ and fixing $\eps>0$ sufficiently small, we find that \eqref{eq:skeleton L^2 est prep} is satisfied with $R_t\ceqq \|\Xi\|_{L^2(0,t;L^2(\O))}^2$, for some $\alpha>0$.  Consequently, \eqref{eq:skeleton L^2 est} holds.

\subsection{Intermediate estimates for skeleton barotropic and baroclinic modes}\label{sub:skeleton intermed}

Here, we use the $L^2$-estimate \eqref{eq:skeleton L^2 est} to  prove a powerful intermediate estimate, namely \eqref{eq:skeleton intermed est} below. We introduce a  vertical average and difference function: 
\begin{align}\label{eq: def bar tilde}
     \bar{g}(t,x_{\h})\ceqq \frac{1}{h}\int_{-h}^0 g(t,x_{\h},\zeta)\dd\zeta,\quad \tilde{g}(t,x)\ceqq g(t,x)-\bar{g}(t,x_{\h})
\end{align}
for $g\in L^2(\O;\R^2)$, $t\in\R_+$ and  $x=(x_H,x_3)\in\O$.

We will rewrite the $v$-component of the skeleton equation as a coupled PDE for the pair $(\bar{v},\tilde{v})$, which uniquely determines  $v$ and vice versa. The vertical average $\bar{v}$ represents the so-called \emph{barotropic mode}, and $\tilde{v}$ represents the \emph{baroclinic mode}. The strategy of splitting $v$ into these modes for the (deterministic) primitive equations originates from \cite{CT07}. 

Using the $L^2$-estimate \eqref{eq:skeleton L^2 est}, we will prove the following for $(\bar{v},\tilde{v},v)$, for all $t\in[0,T]$: 
\begin{align}\label{eq:skeleton intermed est}
    \begin{split}
    \|\bar{v}(t)\|_{H^1(\Tor^2)}^2&+\|\pz v(t)\|_{L^2(\O)}^2+ \|\tilde{v}(t)\|_{L^4(\O)}^4+\int_0^t\|\bar{v}(s)\|_{H^2(\Tor^2)}^2\dd s\\
    &\quad+\int_0^t \|\nabla\pz v(s)\|_{L^2(\O)}^2\dd s+\int_0^t\||\tilde{v}(s)||\nabla\tilde{v}(s)|\|_{L^2(\O)}^2\dd s \lesssim {C}_{2,v_0,\theta_0,T,\varphi}, 
    \end{split}
\end{align} 
where
\begin{equation}\label{eq: def exp exp constant}
\begin{split}
& {C}_{2,v_0,\theta_0,T,\varphi}=( C_{v_0}+\hat{C}_{2,v_0,\theta_0,T,\varphi})\exp(c_2\hat{C}_{2,v_0,\theta_0,T,\varphi}),\\
&\hat{C}_{2,v_0,\theta_0,T,\varphi}\ceqq T+ \|\Xi\|_{L^2(0,T;L^2(\O))}^2+  {C}_{1,v_0,\theta_0,T,\varphi}(T+1+{C}_{1,v_0,\theta_0,T,\varphi}) +\|\varphi \|_{L^2(0,T;\ell^2)}^2,
    \end{split}
\end{equation}
with constant ${C}_{1,v_0,\theta_0,T,\varphi}$ from \eqref{eq:skeleton L^2 est} and some constant $c_2>0$ independent of $\varphi$, $T$, $v$ and $\theta$.

We note that $\overline{\hhp v}=\hhp_\h \bar{v}$, $\pz  v=\pz  \tilde{v}$ and  $w(v)=w(\tilde{v})$ since $\nabla_{\h}\cdot \bar{v}=0$. Also, for any $f\in L^2(\O;\R^2)$, we have $\widetilde{\hhp f}=\tilde{f}$. Using these facts, similar as in \cite[\S 5.2.2]{AHHS22}, one can show that $(\bar{v},\tilde{v})$ solves  
\begin{align}\label{eq:primitive tilde and bar}
\begin{cases}
\bar{v}' -\Delta_{\h} \bar{v}=\Big(\pr\big[-(\bar{v}\cdot \nabla_{\h})\bar{v}- \force(\tilde{v}) + \bar{\fvt}+ \mathcal{P}_{\gamma,\phi}(\cdot,v)\big]\Big)\hspace{3cm}&\\
\qquad\qquad\qquad\qquad\qquad+\sum_{n\geq 1}\pr \big[(\phi_{n,\h}\cdot\nabla_{\h}) \bar{v}+\overline{\phi^3_n \pz  v }  +\overline{\gvtn}\big] \varphi_n &\text{ on }\Tor^2, \\
\bar{v}(\cdot,0)=\bar{v}_0& \text{ on }\Tor^2,\\
\tilde{v}' -\Delta \tilde{v}=\big[-(\tilde{v}\cdot \nabla_{\h})\tilde{v}+\forcetwo(\tilde{v},\bar{v}) + \tilde{\fvt} \,\big]  
+\sum_{n\geq 1}\big[(\phi_{n}\cdot\nabla) \tilde{v}-\overline{\phi^3_n \pz  v } +\tilde{\gvtn}\,\big] \varphi_n &\text{ on }\O,\\
\pz  \tilde{v}(\cdot,-h)=\pz  \tilde{v}(\cdot,0)=0&\text{ on }\ \Tor^2,\\
\tilde{v}(\cdot,0)=\tilde{v}_0\ceqq v_0-\bar{v}_0&\text{ on }\O,
\end{cases}
\end{align}
where $\phi_{n,\h}\ceqq (\phi^1_n,\phi^2_n)$ and 
\begin{align}\label{eq: abbrevs}
\begin{split}
&\forcetwo(\tilde{v},\bar{v})\ceqq- w(v) \pz  \tilde{v} -(\tilde{v}\cdot \nabla_{\h}) \bar{v} -( \bar{v}\cdot \nabla_{\h} )\tilde{v} +\force(\tilde{v}),\\
&\force(\tilde{v})\ceqq \textstyle{\frac{1}{h}\int_{-h}^0 \big[(\tilde{v}\cdot \nabla_{\h}) \tilde{v}+\tilde{v} (\nabla_{\h}\cdot \tilde{v})\big]\dd\zeta}, \\
&f\ceqq \mathcal{J}_\kappa {\theta}+F_v(\cdot,v,\theta,\nabla v)+\mathcal{P}_{\gamma,G} (t,v),\\
&g_n\ceqq G_{v,n}(\cdot,v).
\end{split}
\end{align} 

Before we turn to the proof of \eqref{eq:skeleton intermed est}, we discuss two estimates that will be applied many times. 

\begin{remark}\label{rem: removing bar or tilde}
For any  $p\in[1,\infty)$ and $s\geq 0$:
    \[
    \|\bar{g}\|_{H^{s,p}(\Tor^2)}, \|\tilde{g}\|_{H^{s,p}(\O)} \lesssim_h \|{g}\|_{H^{s,p}(\O)}.
    \]
Note that $\tilde{g}=g-\bar{g}$, so   by the triangle inequality, it suffices to prove the claim for $\bar{g}$. 
    
The  case $s=0$, i.e.\  the $L^p$-case,  follows (for example) from H\"older's inequality on $(-h,0)$: 
\begin{align*}
        \|\bar{g}\|_{L^p(\Tor^2)}^p=\int_{\Tor^2}\big| h^{-1}\int_{-h}^0  g\dd x_3\big|^p\dd x_H 
        \leq h^{-p}\int_{\Tor^2}h^{p-1} \|g\|_{L^p(-h,0)}^p\dd x_H= h^{-1}\|{g}\|_{L^p(\O)}^p.
    \end{align*}
    For the analogous $H^{k,p}$-estimates with $s=k\in\N$, simply observe from \eqref{eq: def bar tilde} that for $i\in\{1,2\}$, we have  $\partial_i\bar{g}=\overline{\partial_i g}$, so $\|\partial_i\bar{g}\|_{L^p(\Tor^2)}\lesssim \|\partial_i g\|_{L^p(\O)}$ by the $L^p$-case above.  Using this argument, the estimate for $k\in\N$ follows by induction. The case $s\in\R_+$ then follows by exactness of the complex interpolation method. 
\end{remark}

\begin{remark}\label{rem:g_n H1}
    For $G_{v,n}(\cdot,v)$, we have the following estimate. By the chain rule (with $\nabla=\nabla_x$), we have $ 
    |\nabla (G_{v,n}(\cdot,v))|\lesssim | \nabla  G_{v,n}(\cdot,v)|+|\partial_y G_{v,n}(\cdot,v)||\nabla  v|$ for all $n\in\N$. 
    Combining this with 
    Assumption \ref{it:sublinearity_Gforce} gives 
    $
    \|(G_{v,n}(\cdot,v))_n\|_{H^1(\O;\ell^2)}^2\lesssim \|\Xi\|_{L^2(\O)}^2+\|v\|_{L^2(\O)}^2+\|\nabla v\|_{L^2(\O)}^2.
    $ 
\end{remark} 

Let us start the proof of \eqref{eq:skeleton intermed est}.  We subsequently apply It\^o's formula for $\|\bar{v}\|_{H^1(\Tor^2)}^2$, $\|\pz  v\|_{L^2(\O)}^2$ and $\|\tilde{v}\|_{L^4(\O)}^4$. After that, \eqref{eq:skeleton intermed est} will be obtained by Gr\"onwall's inequality applied to the map 
$
t\mapsto  \|\bar{v}(t)\|_{H^1(\Tor^2)}^2+ \|\pz  v(t)\|_{L^2(\O)}^2+c\|\tilde{v}(t)\|_{L^4(\O)}^4$,  for a sufficiently large $c>0$, which is needed to compensate for $\||\tilde{v}(s)||\nabla\tilde{v}|\|_{L^2(\O)}^2$-terms that arise in the estimates for $\|\bar{v}\|_{H^1(\Tor^2)}^2$ and $\|\pz  v\|_{L^2(\O)}^2$. 

For $\bar{v}$, the It\^o formula applied on the Gelfand triple $({H}^2(\Tor^2;\R^2),{H}^1(\Tor^2;\R^2),{L}^2(\Tor^2;\R^2))$ gives 
\begin{align}
 \| \bar{v}(t)\|_{H^1(\Tor^2)}^2-&\|\bar{v}_0\|_{H^1(\Tor^2)}^2  \notag \\
 &= 2\int_0^t\<\Delta_{\h} \bar{v},\bar{v}\?\dd s  +
   2\int_0^t\Big\<\pr\big[-(\bar{v}\cdot \nabla_{\h})\bar{v}- \force(\tilde{v}) + \bar{\fvt}+ \mathcal{P}_{\gamma,\phi}(\cdot,v)\big],\bar{v}\Big\?\dd s\notag\\
  &\quad+
   2\int_0^t\Big\<\sum_{n\geq 1}\pr \big[(\phi_{n,\h}\cdot\nabla_{\h}) \bar{v}+\overline{\phi^3_n \pz  v }  +\overline{\gvtn}\big] \varphi_n,\bar{v}\Big\?_{H^1(\Tor^2)}\dd s\label{eq: Ito bar v}\\
   &\eqqc \bar{I}_{1}^{\scriptscriptstyle H^1}(t,v,\theta,\varphi).\label{eq: Ito bar v def}
\end{align}
Since the domain is the torus, we have
\begin{align*}
\<\Delta_{\h}\bar{v},\bar{v}\?=\<\Delta_{\h}\bar{v},\bar{v}\?_{L^2(\Tor^2)}-\<\Delta_{\h}\bar{v},\Delta_{\h}\bar{v}\?_{L^2(\Tor^2)} 
&=-\|\bar{v}\|_{H^2(\Tor^2)}^2+\|\bar{v}\|_{L^2(\Tor^2)}^2.
\end{align*} 
For the second term of \eqref{eq: Ito bar v},  Young's inequality gives
\begin{align*}
     \Big\<\pr\big[-(\bar{v}\cdot \nabla_{\h})\bar{v} &- \force(\tilde{v}) + \bar{\fvt}+ \mathcal{P}_{\gamma,\phi}(\cdot,v)\big],\bar{v}\Big\?  \\
    &\leq  C_\eps\big\|-(\bar{v}\cdot \nabla_{\h})\bar{v}- \force(\tilde{v}) + \bar{\fvt}+ \mathcal{P}_{\gamma,\phi}(\cdot,v)\big\|_{L^2(\Tor^2)}^2+\eps\|\bar{v}\|_{H^2(\Tor^2)}^2.
\end{align*}
We estimate the first term of the right-hand side in parts. 
Note that $[L^2(\Tor^2),H^1(\Tor^2)]_{1/2}=H^{1/2}(\Tor^2)\into L^{4}(\Tor^2)$. Using interpolation, Young's inequality and Remark \ref{rem: removing bar or tilde}, we thus have  
\begin{align*}
\|(\bar{v}\cdot \nabla_{\h})\bar{v}\|_{L^2(\Tor^2)}^2
    &\lesssim
    \|\bar{v}\|_{L^{4}(\Tor^2)}^2\|\nabla_{\h}\bar{v}\|_{L^{4}(\Tor^2)}^2 \\
    &\lesssim \|\bar{v}\|_{L^{2}(\Tor^2)}\|\bar{v}\|_{H^{1}(\Tor^2)}\|\nabla_{\h}\bar{v}\|_{L^2(\Tor^2)} \|\nabla_{\h}\bar{v}\|_{H^{1}(\Tor^2)}\\
    &\lesssim C_\eps (\|{v}\|_{L^{2}(\O)}^2\|\nabla v\|_{L^2(\O)}^2)\|\bar{v}\|_{H^{1}(\Tor^2)}^2+\eps\|\bar{v}\|_{H^{2}(\Tor^2)}^2. 
\end{align*}  
Turning to the next terms, we have 
\begin{equation}\label{eq:origin c}
  \|\force(\tilde{v})\|_{L^2(\Tor^2)}^2\lesssim \||\tilde{v}||\nabla\tilde{v}|\|_{L^2(\O)}^2, 
\end{equation}
where the constant comes from  Remark \ref{rem: removing bar or tilde} and depends only on the height $h$ of $\O$. 
Moreover, by Remark \ref{rem: removing bar or tilde} and the definition of $f$ (see \eqref{eq: abbrevs}), 
\begin{align*}
\|\bar{\fvt}\|_{L^2(\Tor^2)}^2&\lesssim\|{\fvt}\|_{L^2(\O)}^2\lesssim \|\mathcal{J}_\kappa {\theta}\|_{L^2(\O)}^2+\|F_v(\cdot,v,\theta,\nabla v)\|_{L^2(\O)}^2+\|\mathcal{P}_{\gamma,G} (t,v)\|_{L^2(\O)}^2.
\end{align*}
For the terms on the right-hand side, one can use Assumption  \ref{it:well_posedness_primitive_kone_smoothness},  H\"older's inequality and the embedding $H^1(\O)\into H^1(\Tor^2;L^2(-h,0))\into L^\rho(\Tor^2;L^2(-h,0))$ with $\frac{1}{2+\delta}+\frac{1}{\rho}=\frac{1}{2}$ to obtain
\begin{equation}\label{eq:Jkappa est}
    \|\mathcal{J}_\kappa {\theta}\|_{L^2(\O)}^2\lesssim \|\theta\|_{L^2(\O)}+ \|\nabla\theta\|_{L^2(\O)}^2,
\end{equation}
Assumption \ref{it:sublinearity_Gforce} to find that
\[
\|F_v(\cdot,v,\theta,\nabla v)\|_{L^2(\O)}^2\lesssim \|\Xi\|_{L^2(\O)}^2+\|\theta\|_{L^2(\O)}+  \|v\|_{L^2(\O)}^2+\|\nabla v\|_{L^2(\O)}^2,
\]
and Assumption \ref{it:sublinearity_Gforce}, H\"older's inequality and the embedding $H^1(\O)\into L^6(\O)$ to obtain 
\[
\|\mathcal{P}_{\gamma,G} (t,v)\|_{L^2(\O)}^2\lesssim \|\Xi\|_{L^2(\O)}^2+\|v\|_{L^2(\O)}^2.
\] 
We  conclude that 
  \begin{align}
 \|\bar{\fvt}\|_{L^2(\Tor^2)}^2\lesssim \|f\|_{L^2(\O)}^2  
 &\lesssim \|\Xi\|_{L^2(\O)}^2+\|\theta\|_{L^2(\O)}+  \|v\|_{L^2(\O)}^2+ \| \nabla\theta\|_{L^2(\O)}^2 +\|\nabla v\|_{L^2(\O)}^2. \label{eq: f est L2} 
 \end{align} 
For $\mathcal{P}_{\gamma,\phi} (\cdot,v)$, we  note that $\Q f=\Q \bar{f}$, and use Assumption  \ref{it:independence_z_variable}, Remark \ref{rem:assumptionslocal}, the Sobolev embedding $[L^2(\Tor^2),H^1(\Tor^2)]_{2/3}=H^{2/3}(\Tor^2)\into L^6(\Tor^2)$, interpolation and Young's inequality to find
\begin{align*}
\|\mathcal{P}_{\gamma,\phi} (\cdot,v)\|_{L^2(\O)}^2
&\lesssim \max_{1\leq l,m\leq 2}\big\| \|(\gamma_n^{l,m})_n\|_{\ell^2}\|(\Q[\overline{(\phi_n\cdot \nabla)v}])_n\|_{\ell^2} \big\|_{L^2(\O)}^2\\
&\leq\max_{1\leq l,m\leq 2}\|(\gamma_n^{l,m})_n\|_{L^3(\O;\ell^2)}^2\|(\overline{(\phi_n\cdot \nabla)v})_n\|_{L^6(\Tor^2;\ell^2)}^2\\
&\lesssim \|((\phi_{n,\h}\cdot \nabla)\bar{v})_n\|_{L^6(\Tor^2;\ell^2)}^2+\|(\overline{\phi_{n}^3\pz v})_n\|_{L^6(\Tor^2;\ell^2)}^2\\
&\lesssim \|\nabla_{\h}\bar{v}\|_{L^6(\Tor^2))}^2+\| \overline{\pz v}  \|_{L^6(\Tor^2)}^2\\
&\lesssim \eps\|\bar{v}\|_{H^2(\Tor^2)}^2+ {C}_{\eps}\|\bar{v}\|_{H^1(\Tor^2)}^2+\eps \|\overline{\pz v}\|_{H^1(\Tor^2)}^2+C_\eps \|\overline{\pz v}\|_{L^2(\Tor^2)}^2\\ 
&\lesssim \eps\|\bar{v}\|_{H^2(\Tor^2)}^2+ {C}_{\eps}\|\bar{v}\|_{H^1(\Tor^2)}^2+\eps \|\nabla{\pz v}\|_{L^2(\O)}^2+C_\eps \|{\pz v}\|_{L^2(\O)}^2.
\end{align*}

The last term of \eqref{eq: Ito bar v} can be treated with  integration by parts and similar estimates as before, and with  Remark   \ref{rem:g_n H1}:
\begin{align*}
      &\Big\<\sum_{n\geq 1}\pr \big[(\phi_{n,\h}\cdot\nabla_{\h}) \bar{v}+\overline{\phi^3_n \pz  v }  +\overline{\gvtn}\big] \varphi_n,\bar{v}\Big\?_{H^1(\Tor^2)} \\
     &\lesssim \|(\overline{\gvtn})\|_{H^1(\Tor^2;\ell^2)}\|\varphi\|_{\ell^2} \|\bar{v}\|_{H^1(\Tor^2)}+ \Big\|\sum_{n\geq 1}\pr \big[(\phi_{n,\h}\cdot\nabla_{\h}) \bar{v}+\overline{\phi^3_n \pz  v }  \big] \varphi_n\Big\|_{L^2(\Tor^2)}^2+\|\bar{v}\|_{L^2(\Tor^2)}^2\\
     &\qquad\qquad+\Big\<\nabla_H\sum_{n\geq 1}\pr \big[(\phi_{n,\h}\cdot\nabla_{\h}) \bar{v}+\overline{\phi^3_n \pz  v }   \big] \varphi_n,\nabla_H\bar{v}\Big\?_{L^2(\Tor^2)}\\
     &\leq \|({\gvtn})\|_{H^1(\O;\ell^2)}^2+\|\varphi\|_{\ell^2}^2 \|\bar{v}\|_{H^1(\Tor^2)}^2+  \|\bar{v}\|_{L^2(\Tor^2)}^2\\
     &\qquad\qquad +C_\eps\Big\|\sum_{n\geq 1}\pr \big[(\phi_{n,\h}\cdot\nabla_{\h}) \bar{v}+\overline{\phi^3_n \pz  v }  +\overline{\gvtn}\big] \varphi_n\Big\|_{L^2(\Tor^2)}^2+\eps\|\bar{v}\|_{H^2(\Tor^2)}^2\\
     &\lesssim \|\Xi\|_{L^2(\O )}^2+\|v\|_{L^2(\O)}^2+\|\nabla v\|_{L^2(\O)}^2+\|\varphi\|_{\ell^2}^2 \|\bar{v}\|_{H^1(\Tor^2)}^2 +\|\bar{v}\|_{L^2(\Tor^2)}^2 \\
     &\qquad\qquad +C_\eps\|\varphi\|_{\ell^2}^2\Big(\|((\phi_{n,\h}\cdot\nabla_{\h}) \bar{v})_n\|_{L^2(\Tor^2;\ell^2)}^2+\|(\overline{\phi^3_n \pz  v })_n\|_{L^2(\Tor^2;\ell^2)}^2    \Big)+\eps\|\bar{v}\|_{H^2(\Tor^2)}^2\\
     &\lesssim 
      \|\Xi\|_{L^2(\O)}^2+\|v\|_{L^2(\O)}^2+\|\nabla v\|_{L^2(\O)}^2+\|\varphi\|_{\ell^2}^2 \|\bar{v}\|_{H^1(\Tor^2)}^2 + \|\bar{v}\|_{L^2(\Tor^2)}^2\\
     &\qquad\qquad+
     C_\eps\|\varphi\|_{\ell^2}^2\Big(\|\bar{v}\|_{H^1(\Tor^2)}^2+\| \pz v\|_{L^2(\O)}^2  \Big)+\eps\|\bar{v}\|_{H^2(\Tor^2)}^2. 
\end{align*}

Next, let us turn to $\pz v$. Applying $\pz \int_0^t(\cdot)\dd s$ to the equation for $v$ in \eqref{eq:primitive_skeleton}, applying the It\^o formula for $\pz v$ with Gelfand triple $(\mathbb{H}^1(\O;\R^2),\mathbb{L}^2(\O;\R^2),\mathbb{H}^{-1} (\O;\R^2))$, recalling \eqref{eq: abbrevs}  and integrating by parts, we obtain
\begin{align}
   \frac{1}{2}&\|\pz v(t)\|_{L^2(\O)}^2 -\frac{1}{2} \|\pz v_0\|_{L^2(\O)}^2\notag\\
   &= \int_0^t \<\pz \Delta v,\pz v\?\dd s\,+\int_0^t \Big\<\pz \hhp\big[ -(v\cdot \nabla_{\h}) v- w(v)\pz  v+f+\mathcal{P}_{\gamma,\phi} (s,v)  \big],\pz v\Big\?\dd s\notag\\
   &\quad+\int_0^t \Big\<\pz \sum_{n\geq 1}\hhp \big[(\phi_{n}\cdot\nabla) v  +g_n \big]\varphi_n ,\pz v\Big\?\dd s \notag\\
   &=-\int_0^t \int_\O|\nabla \pz v|^2\dd x\dd s
  -\int_0^t\int_\O \pz[(u\cdot \nabla) v] \cdot\pz   v\dd x\dd s-\int_0^t\int_\O \hhp[f]\cdot\pzz v\dd x\dd s\notag\\
   &\quad+\int_0^t\int_\O \sum_{n\geq 1} [ (\phi_{n,\h}\cdot\nabla_{\h}) \pz v+\pz(\phi_n^3\pz v)+ \pz g_n]\varphi_n \cdot\pz v\dd x\dd s \notag\\
   &\eqqc I_{1}^{\scriptscriptstyle \pz}(t,v,\theta,\varphi). \label{eq: Ito pz v}
\end{align}
Here, for the terms with $(u\cdot \nabla) v$, $\mathcal{P}_{\gamma,\phi}$, $(\phi_{n}\cdot\nabla) v$ and $g_n$, we used that $\pz \hhp g= \pz g$ in the distributional sense, whenever $g\in  L^2(\O;\R^2)$ and  $\pz g$ exists in the distributional sense, as follows instantly from  \eqref{eq:def hhp}. We also used Assumption \ref{it:independence_z_variable} to compute $\pz((\phi_{n,\h}\cdot\nabla_{\h}) v)$, and we used that $\mathcal{P}_{\gamma,\phi} =\overline{\mathcal{P}_{\gamma,\phi}}$ by \eqref{eq:def Pgamma} and the fact that $\Q[f]=\Q_{\h}[\bar{f}]$. In particular,  $\pz\hhp[\mathcal{P}_{\gamma,\phi}]=\pz\mathcal{P}_{\gamma,\phi} =\pz\overline{\mathcal{P}_{\gamma,\phi}}=0$. 

Recall that $\pz w(v)=-\nabla_{\h}\cdot v$,  so we can rewrite 
\begin{align*}
\int_\O \pz [ (u\cdot \nabla) v] \cdot\pz   v\dd x&= \int_\O  [(\pz u\cdot \nabla) v]\cdot\pz v\dd x+\int_\O  [(u\cdot \nabla) \pz v] \cdot\pz   v\dd x\\
&=  \int_\O  [(\pz u\cdot \nabla) v]\cdot\pz v\dd x\\
&= \int_\O [(\pz v\cdot \nabla_{\h}) v]\cdot\pz v\dd x-\int_\O [( \nabla_{\h}\cdot v)\pz  v]\cdot\pz v\dd x,
\end{align*} 
using the divergence-free condition and boundary conditions in the second line. The last line can now be estimated using  \cite[p.\ 24, 25]{HH20} (see $I_5$ and $I_6$ therein), 
leading to 
\begin{align}\label{eq:origin c 2}
\big|\int_\O \pz [ (u\cdot \nabla) v] \cdot\pz   v\dd x\big|&\lesssim \eps \|\nabla\pz v\|_{L^2(\O)}^2+C_\eps\|\nabla v\|_{L^2(\O)}^2\|\pz v\|_{L^2(\O)}^2+C_\eps \||\tilde{v}||\nabla \tilde{v}|\|_{L^2(\O)}^2,
\end{align} 
where we also used Remark \ref{rem: removing bar or tilde}. 
Next, by \eqref{eq: f est L2}  we have
\begin{align*}
\int_\O &\hhp[f]\cdot\pzz v\dd x \leq C_\eps \|f\|_{L^2(\O)}^2+\eps\|\nabla\partial_3v\|_{L^2(\O)}^2 \\
&\lesssim C_\eps\big(\|\Xi\|_{L^2(\O)}^2+\|\theta\|_{L^2(\O)}+  \|v\|_{L^2(\O)}^2+ \| \nabla\theta\|_{L^2(\O)}^2 +\|\nabla v\|_{L^2(\O)}^2 \big)+\eps\|\nabla\partial_3v\|_{L^2(\O)}^2.
\end{align*}
By Remarks \ref{rem:assumptionslocal} and   \ref{rem:g_n H1}, 
\begin{align*}
   \int_\O \sum_{n\geq 1} &[(\phi_{n,\h}\cdot\nabla_{\h}) \pz v+\pz g_n]\varphi_n \cdot\pz v\dd x \leq   \big(\|\nabla \partial_3 v\|_{L^2(\O)} +\|\nabla g_n\|_{L^2(\O;\ell^2)}\big)\|\varphi\|_{\ell^2}\|\partial_3 v\|_{L^2(\O)}\\
   &\qquad\lesssim \eps\|\nabla\partial_3 v\|_{L^2(\O)}^2+\|\Xi\|_{L^2(\O)}^2+  \|v\|_{L^2(\O)}^2+\|\nabla v\|_{L^2(\O)}^2+C_\eps \|\varphi\|_{\ell^2}^2 \|\partial_3 v\|_{L^2(\O)}^2. 
\end{align*} 
Moreover, we can estimate  $\phi^3_n \pz  v$ in $H^1(\O;\ell^2)$ as follows. Let  $\rho\in(0,6)$ satisfy $\frac{1}{2}=\frac{1}{3+\delta}+\frac{1}{\rho}$. Then,  using integration by parts, Assumption \ref{it:well_posedness_primitive_phi_smoothness} and Remark \ref{rem:assumptionslocal}, we find that
\begin{align}
    \|(\phi^3_n \pz  v)_n\|_{H^1(\O;\ell^2)}^2&= \| (\phi_n^3 \pz v)_n\|_{L^2(\O;\ell^2)}^2+\| (\phi_n^3\nabla\pz v)_n\|_{L^2(\O;\ell^2)}^2 +\sum_{j=1}^3\|\big((\partial_j\phi_n^3)\pz v\big)_n\|_{L^2(\O;\ell^2)}^2 \notag\\
    &\lesssim  \| \pz v\|_{L^2(\O)}^2+\| \nabla\pz v\|_{L^2(\O)}^2 +\sum_{j=1}^3\|(\partial_j\phi_n^3)_n\|_{L^{3+\delta}(\O;\ell^2)}^2\|\pz v\|_{L^\rho(\O)}^2\notag \\
    &\lesssim  \| \pz v\|_{L^2(\O)}^2+\| \nabla\pz v\|_{L^2(\O)}^2,\label{eq:pzz est}
\end{align} 
where we used the  Sobolev embedding $H^1(\Tor^2)\into L^{\rho}(\Tor^2)$. In particular,
\begin{align*}
   \int_\O \sum_{n\geq 1} \pz(\phi_n^3\pz v) \varphi_n \cdot\pz v\dd x &\leq   \|\nabla (\phi^3_n\partial_3 v)\|_{L^2(\O;\ell^2)}  \|\varphi\|_{\ell^2}\|\partial_3 v\|_{L^2(\O)}\\
   &\lesssim \| \pz v\|_{L^2(\O)}^2+ \eps\|\nabla\partial_3 v\|_{L^2(\O)}^2+ C_\eps \|\varphi\|_{\ell^2}^2 \|\partial_3 v\|_{L^2(\O)}^2. 
\end{align*} 

For $\|\tilde{v}(t)\|_{L^4(\O)}^4$, we have by the chain rule/It\^o formula (similar to \cite[Lem.\ A.6]{AV25survey}, \cite[\S 3]{dareiotis}): 
\begin{align}
\frac{1}{4}\|\tilde{v}(t)\|_{L^4(\O)}^4-\frac{1}{4}\|\tilde{v}_0\|_{L^4(\O)}^4&=\int_0^t\int_\O\Delta \tilde{v}\cdot \tilde{v}|\tilde{v}|^2\dd x\dd s\notag\\
&\quad+\int_0^t\int_\O \Big[-(\tilde{v}\cdot \nabla_{\h})\tilde{v}+\forcetwo(\tilde{v},\bar{v}) + \tilde{\fvt} \,\Big]\cdot \tilde{v}|\tilde{v}|^2  \dd x\dd s\notag\\
&\qquad+\int_0^t\int_\O\sum_{n\geq 1}\Big[(\phi_{n}\cdot\nabla) \tilde{v}-\overline{\phi^3_n \pz  v } +\tilde{\gvtn}\,\Big] \varphi_n\cdot \tilde{v}|\tilde{v}|^2 \dd x\dd s\notag\\
&\eqqc \tilde{I}_{1}^{\scriptscriptstyle L^4}(t,v,\theta,\varphi). \label{eq: Ito tilde v L4}
\end{align}
Now, integrating by parts, 
\begin{align*}
\int_\O\Delta \tilde{v}\cdot \tilde{v}|\tilde{v}|^2\dd x&=-\sum_{i=1}^3\Big(\int_\O\nabla \tilde{v}_i\cdot \nabla\tilde{v}_i|\tilde{v}|^2\dd x+\int_\O\nabla \tilde{v}_i\cdot \tilde{v}_i\nabla|\tilde{v}|^2\dd x\Big)\\
&=-\||\tilde{v}||\nabla \tilde{v}|\|_{L^2(\O)}^2-\frac{1}{2}\int_\O \nabla|\tilde{v}|^2\cdot \nabla|\tilde{v}|^2\dd x \leq -\||\tilde{v}||\nabla \tilde{v}|\|_{L^2(\O)}^2.
\end{align*}
Moreover, recalling \eqref{eq: abbrevs} and using that $w(v)=w(\tilde{v})$, we find  
\begin{align*}
\int_\O \big[-(\tilde{v}\cdot \nabla_{\h})\tilde{v}+\forcetwo(\tilde{v},\bar{v})\big]\cdot \tilde{v}|\tilde{v}|^2  \dd x&= \int_\O \big[-(u\cdot \nabla)\tilde{v}+(\bar{v}\cdot \nabla_{\h})\tilde{v}\big]\cdot \tilde{v}|\tilde{v}|^2  \dd x\\
&\quad +\int_\O \big[-(\tilde{v}\cdot \nabla_{\h}) \bar{v} -( \bar{v}\cdot \nabla_{\h} )\tilde{v} +\force(\tilde{v})\big]\cdot \tilde{v}|\tilde{v}|^2  \dd x\\
&=  \int_\O \big[-(\tilde{v}\cdot \nabla_{\h}) \bar{v} +\force(\tilde{v})\big]\cdot \tilde{v}|\tilde{v}|^2  \dd x\\
&\leq C_\eps \|\nabla v\|_{L^2(\O)}^2\|\tilde{v}\|_{L^4(\O)}^4+\eps \||\tilde{v}||\nabla \tilde{v}|\|_{L^2(\O)}^2,
\end{align*}
where we used the divergence-free condition and boundary conditions for the second equality, and the last line follows from \cite[p.\ 25, 26]{HH20} (see $I_7$ and $I_8$ therein) and by Remark \ref{rem: removing bar or tilde}.  
  Next, since $H^1(\O)\into L^4(\O)$ and by Remark \ref{rem: removing bar or tilde}, H\"older's inequality, Young's inequality and \eqref{eq: f est L2},
\begin{align*}
\int_\O  \tilde{\fvt}  \cdot \tilde{v}|\tilde{v}|^2  \dd x  &\lesssim \|f\|_{L^2(\O)} \|\tilde{v}\|_{L^4(\O)}\||\tilde{v}|^2\|_{L^4(\O)} \\
&\lesssim C_\eps\|f\|_{L^2(\O)}^2 \|\tilde{v}\|_{L^4(\O)}^2+\eps(\||\tilde{v}|^2\|_{L^2(\O)}^2+\|\nabla|\tilde{v}|^2\|_{L^2(\O)}^2) \\
&\lesssim C_\eps(\|\Xi\|_{L^2(\O)}^2+\|\theta\|_{L^2(\O)}+  \|v\|_{L^2(\O)}^2+ \| \nabla\theta\|_{L^2(\O)}^2 +\|\nabla v\|_{L^2(\O)}^2)(1+ \|\tilde{v}\|_{L^4(\O)}^4) \\
&\qquad +\eps \|\tilde{v}\|_{L^4(\O)}^4+\eps\||\tilde{v}||\nabla \tilde{v}|\|_{L^2(\O)}^2. 
\end{align*}
We have, using Remark \ref{rem:assumptionslocal} and Young's inequality,
\begin{align*}
\int_\O\sum_{n\geq 1}[(\phi_{n}\cdot\nabla) \tilde{v}] \varphi_n\cdot \tilde{v}|\tilde{v}|^2 \dd x \lesssim \int_\O|\nabla \tilde{v}|\|\varphi\|_{\ell^2} |\tilde{v}|^3 \dd x &\leq \eps\||\tilde{v}||\nabla\tilde{v}|\|_{L^2(\O)}^2+C_\eps\|\varphi\|_{\ell^2}^2\|\tilde{v}\|_{L^4(\O)}^4. 
\end{align*}
Furthermore, by H\"older's inequality,  Minkowski's inequality and Remark \ref{rem:assumptionslocal},   
\begin{align*}
\big|\int_\O\sum_{n\geq 1} -\overline{\phi^3_n \pz  v }  \varphi_n\cdot \tilde{v}|\tilde{v}|^2 \dd x\big|
&\leq \|\varphi\|_{\ell^2}
\int_{\Tor^2}\|(\overline{\phi^3_n \pz  v })_n\|_{\ell^2}\big(\int_{-h}^0|\tilde{v}|^3 \dd x_3\big)\dd x_{\h}\\
&\leq \|\varphi\|_{\ell^2}
\|(\overline{\phi^3_n \pz  v })_n\|_{L^2(\Tor^2;\ell^2)}\Big\|\int_{-h}^0|\tilde{v}|^3 \dd x_3\Big\|_{L^2(\Tor^2)}\\
&\lesssim \|\varphi\|_{\ell^2}
\|({\phi^3_n \pz  v })_n\|_{L^2(\O;\ell^2)}\int_{-h}^0\||\tilde{v}|^3\|_{L^2(\Tor^2)} \dd x_3\\ 
&\lesssim \|\varphi\|_{\ell^2}
\|\pz  v\|_{L^2(\O)}\int_{-h}^0\||\tilde{v}|^2\|_{L^3(\Tor^2)}^{3/2} \dd x_3. 
\end{align*}
We estimate the last term further  using that $[L^2(\Tor^2),H^1(\Tor^2)]_{1/3}=H^{1/3}(\Tor^2)\into L^3(\Tor^2)$ and using H\"older's inequality for the variable $x_3$: 
\begin{align*}
\int_{-h}^0\||\tilde{v}|^2\|_{L^3(\Tor^2)}^{3/2} \dd x_3
&\lesssim \int_{-h}^0\||\tilde{v}|^2\|_{L^2(\Tor^2)}\big(\||\tilde{v}|^2\|_{L^2(\Tor^2)}^{1/2}+\|\nabla_{\h}|\tilde{v}|^2\|_{L^2(\Tor^2)}^{1/2}\big) \dd x_3\\
&\lesssim  \||\tilde{v}|^2\|_{L^2(\O)} \Big(\int_{-h}^0 \||\tilde{v}|^2\|_{L^2(\Tor^2)}+\|\nabla_{\h}|\tilde{v}|^2\|_{L^2(\Tor^2)} \dd x_3\Big)^{1/2}\\
&\lesssim \|\tilde{v}\|_{L^4(\O)}^2 \Big(  \|\tilde{v}\|_{L^4(\O)}+\|\nabla|\tilde{v}|^2\|_{L^2(\O;\R^3)}^{1/2}\Big).
\end{align*}
Combining and applying Young's inequality gives
\begin{align*}
    \big|\int_\O\sum_{n\geq 1} -\overline{\phi^3_n \pz  v }  \varphi_n\cdot &\tilde{v}|\tilde{v}|^2 \dd x\big|\lesssim 
C_\eps\|\varphi\|_{\ell^2}^2
\|\tilde{v}\|_{L^4(\O)}^4+\eps \|\pz  v\|_{L^2(\O)}^2 \Big(\|\tilde{v}\|_{L^4(\O)}^2+\|\nabla|\tilde{v}|^2\|_{L^2(\O;\R^3)}\Big)\\
&\lesssim
C_\eps\|\varphi\|_{\ell^2}^2
\|\tilde{v}\|_{L^4(\O)}^4+\eps \|\nabla v\|_{L^2(\O)}^2\|\pz  v\|_{L^2(\O)}^2  +\eps\|\tilde{v}\|_{L^4(\O)}^4+\eps\||\tilde{v}||\nabla\tilde{v}|\|_{L^2(\O)}^2.
\end{align*}
Lastly, by Remark \ref{rem: removing bar or tilde},  Remark \ref{rem:g_n H1} and since $H^1(\O)\into L^4(\O)$, 
\begin{align*}
\big|\int_\O\sum_{n\geq 1} \tilde{\gvtn} \varphi_n\cdot \tilde{v}|\tilde{v}|^2 \dd x\big|&\lesssim \|  \varphi\|_{\ell^2}\|( {g_n})_n\|_{L^4(\O;\ell^2)}\|\tilde{v}\|_{L^4(\O)}\||\tilde{v}|^2\|_{L^2(\O)}\\
&\lesssim  \|({g_n})_n\|_{H^1(\O;\ell^2)}^2\|\tilde{v}\|_{L^4(\O)}^2+ \|\varphi\|_{\ell^2}^2\|\tilde{v}\|_{L^4(\O)}^4  \\
&\lesssim   (\|\Xi\|_{L^2(\O)}^2 +  \|v\|_{L^2(\O)}^2+ \|\nabla v\|_{L^2(\O)}^2+\|\varphi\|_{\ell^2}^2)(1+ \|\tilde{v}\|_{L^4(\O)}^4).
\end{align*} 

For Gr\"onwall's inequality, define 
$$
H_c(t)\ceqq \|\bar{v}(t)\|_{H^1(\Tor^2)}^2+\|\pz v(t)\|_{L^2(\O)}^2+c\|\tilde{v}(t)\|_{L^4(\O)}^4. 
$$ 
Adding the above estimates and fixing first $\eps>0$ small enough, and then $c>0$ large enough, we conclude that  
\begin{align}\label{eq: Gronwall prep intermed}
\begin{split}
H_c (t)&\lesssim_c C_{v_0}-\int_0^t\|\bar{v}\|_{H^2(\Tor^2)}^2\dd s -\int_0^t \|\nabla\pz v\|_{L^2(\O)}^2\dd s-\int_0^t\||\tilde{v}||\nabla\tilde{v}|\|_{L^2(\O)}^2\dd s\\
&\quad+\int_0^t (1+H_c)\bigg[ 1+\|\Xi\|_{L^2(\O)}^2+\|\theta\|_{L^2(\O)}+\|\nabla \theta\|_{L^2(\O)}^2+  \|v\|_{L^2(\O)}^2+\|\nabla v\|_{L^2(\O)}^2\\
&\qquad\qquad\qquad\qquad\qquad\qquad\qquad\qquad\qquad\qquad\qquad+ \|v\|_{L^2(\O)}^2 \|\nabla v\|_{L^2(\O)}^2+\|\varphi \|_{\ell^2}^2\bigg]\dd s.
\end{split}
\end{align}
Note that $\eps>0$ can be chosen independently of $\varphi$, $T$, $v_0$, $\theta_0$, $v$ and $\theta$ (it only depends on norms of embeddings for spaces of functions on $\O$ or $\Tor^2$). 
Moreover, the constant $c$ is merely used to compensate for \eqref{eq:origin c} and \eqref{eq:origin c 2}. The constant in \eqref{eq:origin c} originates from Remark \ref{rem: removing bar or tilde} and depends only on $h$, and $C_\eps$ in \eqref{eq:origin c 2} is the constant from Young's inequality with $\eps$. Thus $c>0$ can also be chosen independently of $\varphi$, $T$, $v_0$, $\theta_0$, $v$ and $\theta$. 

Gr\"onwall's inequality and  an application of  \eqref{eq:skeleton L^2 est} now yield \eqref{eq:skeleton intermed est}, with constant $\hat{C}_{2,v_0,\theta_0,T,\varphi}$ defined by \eqref{eq: def exp exp constant} as we claimed.

\subsection{$H^1$-estimate for the horizontal skeleton velocity}\label{sub:skeleton H1} 
We will now derive the claimed estimate of Proposition \ref{prop:skeleton}  from the intermediate estimate \eqref{eq:skeleton intermed est} and the $L^2$-estimate \eqref{eq:skeleton L^2 est}. 

Note that \eqref{eq:skeleton L^2 est} already yielded the needed estimates for $\theta$. Thus it remains to consider $v$ in the analytically strong setting: we now use the Gelfand triple $(\mathbb{H}_{\n}^2(\O),\mathbb{H}^1(\O),\mathbb{L}^2(\O))$, for which the duality is defined through $\mathbb{H}^1$. In particular, we have by \eqref{eq:Delta v,v}: 
\begin{equation*}
\<\Delta v,v\?=-\|v\|_{H^2}^2+\|v\|_{L^2}^2. 
\end{equation*}
We will work towards a Gr\"onwall argument for $t\mapsto \sup_{r\in[0,t]} \|v(r)\|_{H^1(\O)}^2+\|v\|_{L^2(0,t;H^2(\O;\R^2))}^2$. 
Applying the chain rule  \cite[Lem.\ 2.2 p.\ 30]{pardoux} on   $(\mathbb{H}^2(\O),\mathbb{H}^1(\O),\mathbb{L}^2(\O))$ and using  the Cauchy--Schwarz inequality and Young's inequality yields for any $\sigma>0$ and $0\leq r\leq t\leq T$: 
\begin{align}
   \frac{1}{2} \|v(r)&\|_{H^1(\O)}^2 -\frac{1}{2}\|{v}_0\|_{H^1(\O)}^2 = \int_0^r\<\Delta v,v\?\dd s \notag\\
    &\quad+
   \int_0^r\Big\<\hhp\big[ -(v\cdot \nabla_{\h}) v- w(v)\pz  v+\mathcal{P}_{\gamma} (\cdot,v) +\mathcal{J}_\kappa {\theta}+ \fv (\cdot,v,\theta,\nabla v)\big], v\Big\?\dd s\notag\\
   &\quad+\int_0^r\Big\<\sum_{n\geq 1} \hhp\big[(\phi_{n}\cdot\nabla) v  +\gvn(\cdot,v)\big]\varphi_n,v\Big\?_{H^1(\O)}\dd s\notag\\
   &\eqqc I_1^{H^1}(r,v,\theta,\varphi)\label{eq: Ito v H1 def}\\ 
&\lesssim
    -\|v\|_{L^2(0,r;H^2(\O))}^2+\|v\|_{L^2(0,T;L^2(\O))}^2 \notag\\
    &\quad+\int_0^tC_\sigma\Big(\big\|(v\cdot \nabla_{\h}) v\|_{L^2(\O)}^2+\|w(v)\pz  v\|_{L^2(\O)}^2+\|\mathcal{P}_{\gamma,\phi} (\cdot,v)\|_{L^2(\O)}^2 \notag\\
    &\quad+\|\mathcal{P}_{\gamma,G} (\cdot,v)\|_{L^2(\O)}^2+\|\mathcal{J}_\kappa {\theta}\|_{L^2(\O)}^2+ \|\fv (\cdot,v,\theta,\nabla v)\|_{L^2(\O)}^2\Big)+\sigma\|v\|_{H^2(\O)}^2\dd s\notag\\ 
&\quad+\int_0^t \|\big(\hhp[(\phi_{n}\cdot\nabla) v   +\gvn(\cdot,v)]\big)_n\|_{H^1(\O;\ell^2)}\|\varphi\|_{\ell^2}\|v\|_{H^1(\O)}\dd s. \label{eq:skeleton H1}  
\end{align}
We estimate the nonnegative terms from the second line of  \eqref{eq:skeleton H1} on, in order of appearance. 
Recall that $v=\tilde{v}+\bar{v}$, so
$(v\cdot \nabla_{\h}) v=(\tilde{v}\cdot \nabla_{\h}) \tilde{v}+(\tilde{v}\cdot \nabla_{\h}) \bar{v}+(\bar{v}\cdot \nabla_{\h}) \tilde{v}+(\bar{v}\cdot \nabla_{\h}) \bar{v}$. We have
\begin{align*}
    \int_0^t\|(\tilde{v}\cdot \nabla_{\h}) \tilde{v}\|_{L^2(\O)}^2\dd s
    \lesssim\int_0^t\||\tilde{v}||\nabla  \tilde{v}|\|_{L^2(\O)}^2 \dd s. 
\end{align*} 
Due to H\"older's inequality and since $H^2(\Tor^2)\into H^{1,4}(\Tor^2)$,
\begin{align*}
    \int_0^t\|(\tilde{v}\cdot \nabla_{\h}) \bar{v}\|_{L^2(\O)}^2\dd s
    \lesssim 
    \int_0^t\|\tilde{v}\|_{L^4(\O)}^2\|\bar{v}\|_{H^{1,4}(\Tor^2)}^2\dd s 
    &\lesssim (1+ \sup_{r\in[0,T]}\|\tilde{v}(r)\|_{L^4(\O)}^4)\int_0^T\|\bar{v}\|_{H^2(\Tor^2)}^2\dd s. 
\end{align*} 
Due to  H\"older's inequality,  the Sobolev embeddings  $[H^{1}(\O),H^{2}(\O)]_{\frac{1}{2}}=H^{\frac32}(\O)\into H^{1,3}(\O)$  and $H^1(\O)\into L^6(\O)$, interpolation,  Young's inequality and Remark \ref{rem: removing bar or tilde}:
\begin{align*}
    \int_0^t\|(\bar{v}\cdot \nabla_{\h}) \tilde{v}\|_{L^2(\O)}^2\dd s 
    & \lesssim 
    \int_0^t\|\bar{v}\|_{L^6(\Tor^2)}^2\|\tilde{v}\|_{H^{1,3}(\O)}^2\dd s\\
    &\lesssim \int_0^t\|\bar{v}\|_{H^1(\Tor^2)}^2\|\tilde{v}\|_{H^{1}(\O)}\|\tilde{v}\|_{H^{2}(\O)}\dd s\\
     &  \lesssim C_\eps  \int_0^t  \|\bar{v}(s)\|_{H^1(\Tor^2)}^4\sup_{r\in[0,s]}\|{v}(r)\|_{H^1(\O)}^2\dd s+\eps\|{v}\|_{L^2(0,t;H^2(\O))}^2. 
\end{align*} 
Due to H\"older's inequality and two Sobolev embeddings,
\begin{align*}
\int_0^t\|(\bar{v}\cdot \nabla_{\h}) \bar{v}\|_{L^2(\Tor^2)}^2\dd s
\lesssim \int_0^t \|\bar{v}\|_{L^4(\Tor^2)}^2\|\bar{v}\|_{H^{1,4}(\Tor^2)}^2\dd s  
&\lesssim \sup_{r\in[0,T]}\|\bar{v}(r)\|_{H^1(\Tor^2)}^2\int_0^T\|\bar{v}\|_{H^{2}(\Tor^2)}^2\dd s. 
\end{align*} 
For the next term, first observe that \eqref{eq:def_w_v} and  Minkowski's integral inequality give 
\begin{align*}
\|w(v)\|_{L^\infty(-h,0;L^4(\Tor^2))}\leq \big\|\int_{-h}^\cdot \|\nabla\cdot v\|_{L^4(\Tor^2)} \dd \zeta\big\|_{L^\infty(-h,0)}&=\|\nabla\cdot v\|_{L^1(-h,0;L^4(\Tor^2))}\\
&\lesssim\|\nabla v\|_{L^2(-h,0;L^4(\Tor^2))}.
\end{align*} 
Combining with $[L^2(\Tor^2),H^1(\Tor^2)]_{\frac{1}{2}}=H^{\frac{1}{2}}(\Tor^2)\into L^4(\Tor^2)$, interpolation and Young's inequality gives
\begin{align*}
  \int_0^t  \|&w(v)\pz  v\|_{L^2(\O)}^2\dd s 
   \leq \int_0^t\int_{-h}^0 \|w(v)\|_{L^4(\Tor^2)}^2\|\pz  v\|_{L^4(\Tor^2)}^2\dd x_3\dd s\\
  &\lesssim \int_0^t \|\nabla v\|_{L^2(-h,0;L^4(\Tor^2))}^2\|\pz  v\|_{L^2(-h,0;L^4(\Tor^2))}^2\dd s\\
  &\lesssim \int_0^t \|\nabla v\|_{L^2(-h,0;L^2(\Tor^2))}\|\nabla v\|_{L^2(-h,0;H^1(\Tor^2))}\|\pz  v\|_{L^2(-h,0;L^2(\Tor^2))}\|\pz  v\|_{L^2(-h,0;H^1(\Tor^2))}\dd s \\
  &\leq \int_0^t \eps \|v\|_{L^2(-h,0;H^2(\Tor^2))}^2+C_\eps\|v\|_{L^2(-h,0;H^1(\Tor^2))}^2\|\pz  v\|_{L^2(-h,0;L^2(\Tor^2))}^2\|\pz  v\|_{L^2(-h,0;H^1(\Tor^2))}^2\dd s \\
&\leq \eps \|v\|_{L^2(0,t;H^2(\O))}^2+ \int_0^t C_\eps\sup_{r\in[0,s]}\|v(r)\|_{H^1(\O)}^2\|\pz  v(s)\|_{L^2(\O)}^2\|\pz  v(s)\|_{H^1(\O)}^2\dd s.
\end{align*} 
For the next two estimates, recall Assumption  \ref{it:well_posedness_primitive_phi_smoothness} and let $\rho\in(2,6)$ be such that $\frac{1}{3+\delta}+\frac{1}{\rho}=\frac{1}{2}$. Then  $H^\tau(\O)\into L^\rho(\O)$ for $\tau\ceqq \frac{3}{2}-\frac{3}{\rho}\in(0,1)$, and using also  Remark  \ref{rem:assumptionslocal}, interpolation and Young's inequality,  we obtain: 
\begin{align*}
\int_0^t\|\mathcal{P}_{\gamma,\phi} (\cdot,v)\|_{L^2(\O)}^2\dd s
 &\lesssim \max_{1\leq l,m\leq 2}\int_0^t \|(\gamma_n^{l,m})_n\|_{L^{3+\delta}(\O;\ell^2)}^2\|({(\phi_n\cdot \nabla)v})_n\|_{L^\rho(\O;\ell^2)}^2\dd s\\ 
&\lesssim \int_0^t \| \nabla {v} \|_{L^2(0,t;H^\tau(\O))}^2\dd s \\ 
&\lesssim \eps\| {v}\|_{H^2(\O)}^2 + {C}_{\eps} \| {v}\|_{L^2(0,T;H^1(\O))}^2. 
\end{align*}
and  with Remark \ref{rem:g_n H1}, 
\begin{align*}
\int_0^t\|\mathcal{P}_{\gamma,G} (\cdot,v)\|_{L^2(\O)}^2\dd s
 &\lesssim \max_{1\leq l,m\leq 2}\int_0^t\|(\gamma_n^{l,m})_n\|_{L^{3+\delta}(\O;\ell^2)}^2\|(G_{v,n}(\cdot,v))_n\|_{L^\rho(\O;\ell^2)}^2\dd s\\
&\lesssim  \int_0^t\|(G_{v,n}(\cdot,v))_n\|_{H^1(\O;\ell^2)}^2\dd s\\
&\lesssim  \|\Xi\|_{L^2(0,T;L^2(\O))}^2+\|v\|_{L^2(0,T;H^1(\O))}^2. 
\end{align*}
By \eqref{eq:Jkappa est}, 
\begin{align*}
\int_0^t\|\mathcal{J}_\kappa {\theta}\|_{L^2(\O)}^2
\dd s\lesssim \|\theta\|_{L^2(0,T;H^1(\O))}^2. 
\end{align*} 
By Assumption \ref{it:sublinearity_Gforce}, we have 
\begin{align*}
\int_0^t\|\fv (\cdot,v,\theta,\nabla v)\|_{L^2(\O)}^2\dd s&\lesssim \|\Xi\|_{L^2(0,T;L^2(\O))}^2+\|\theta \|_{L^2(0,T;L^2(\O))}^2+ \|v \|_{L^2(0,T;H^1(\O))}^2.
\end{align*} 
Lastly, by \eqref{eq: coerc est}, together with Remarks \ref{rem:assumptionslocal} and  \ref{rem:g_n H1}, we have  
\begin{align*}
\int_0^t  \|&\big(\hhp[(\phi_{n}\cdot\nabla) v +\gvn(\cdot,v)]\big)_n\|_{H^1(\O;\ell^2)}\|\varphi\|_{\ell^2}\|v\|_{H^1(\O)}\dd s\\
&\lesssim \int_0^t\big(\| v\|_{H^2(\O)}+\|v\|_{L^2(\O)}+\|\Xi\|_{L^2(\O)}+\|v\|_{H^1(\O)}\big)\|\varphi\|_{\ell^2}\|v\|_{H^1(\O)}\dd s\\
&\lesssim \eps\|v\|_{L^2(0,t;H^2(\O))}^2+\|\Xi\|_{L^2(0,T;L^2(\O))}^2+ \int_0^tC_{\eps}\|\varphi(s)\|_{\ell^2}^2\sup_{r\in[0,s]}\|v(s)\|_{H^1(\O)}^2\dd s.
\end{align*}

Plugging all above estimates into \eqref{eq:skeleton H1} and taking a supremum over $r\in[0,t]$, we obtain the following estimate for all $t\in[0,T]$: 
\begin{align}\label{eq:H1gronwallprep}
\begin{split}
    & \sup_{r\in[0,t]} \|v(r)\|_{H^1(\O)}^2+\|v\|_{L^2(0,t;H^2(\O))}^2  \leq 2\sup_{r\in[0,t]}\Big[\|v(r)\|_{H^1(\O)}^2+\|v\|_{L^2(0,r;H^2(\O))}^2\Big] \\
     &\lesssim \hat{C}_{v_0}+  (\sigma+C_\sigma\eps+\eps)\|v\|_{L^2(0,t;H^2(\O))}^2 + C_{\sigma,\eps}\big(Y_{v,\theta}(t)+Z_v(t)+ Z_v(t)^2+\|\Xi\|_{L^2(0,T;L^2(\O))}^2\big)\\
     &\qquad+\int_0^t C_{\sigma,\eps}(1+\hat{Z}_v(s))\big(\|\bar{v}(s)\|_{H^1(\Tor^2)}^2+ \|\pz v(s)\|_{H^1(\O)}^2+\|\varphi(s)\|_{\ell^2}^2\big)\sup_{r\in[0,s]}\|v(r)\|_{H^1(\O)}^2\dd s,
     \end{split}
\end{align}
where $C_\sigma, C_{\sigma,\eps}>0$ are independent of $\varphi$ and $T$, and 
\begin{align}\label{eq:defsgronwallfunc}
\begin{split}
    &Y_{v,\theta}(t) \ceqq \|v\|_{L^2(0,t;H^1(\O))}^2+\|\theta\|_{L^2(0,T;H^1(\O))}^2,\\
    &Z_v(t) \ceqq \sup_{r\in[0,t]}\|\bar{v}(r)\|_{H^1(\Tor^2)}^2+\sup_{r\in[0,t]}\|\bar{v}(r)\|_{L^4(\O)}^4+ \|\bar{v}\|_{L^2(0,t;H^2(\Tor^2))}^2+\||\tilde{v}||\nabla\tilde{v}|\|_{L^2(0,t;L^2(\O))}^2,\\
    &\hat{Z}_v(t) \ceqq \|\bar{v}(t)\|_{H^1(\Tor^2)}^2+\|\pz{v}(t)\|_{L^2(\O)}^2.
    \end{split}
\end{align} 
Observe that due to   \eqref{eq:skeleton L^2 est} and \eqref{eq:skeleton intermed est}, we have $Y(T)\lesssim {C}_{1,v_0,\theta_0,T,\varphi}$, $
Z(T)\lesssim {C}_{2,v_0,\theta_0,T,\varphi}$ and
\begin{align*} 
    \|(1+\hat{Z})\big(\|\bar{v}\|_{H^1(\Tor^2)}^2+ &\|\pz v\|_{H^1(\O)}^2+ \|\varphi\|_{\ell^2}^2\big)\|_{L^1(0,T)}\\
   &\lesssim (1+{C}_{2,v_0,\theta_0,T,\varphi})({C}_{2,v_0,\theta_0,T,\varphi}(T+1)+\|\varphi\|_{L^2(0,T;\ell^2)}^2).
\end{align*}
Therefore, by first fixing a small $\sigma>0$ and then a small $\eps>0$,  and then applying Gr\"onwall's inequality,  and using the above estimates for $Y,Z$ and $\hat{Z}$, we find that 
\begin{align}
\begin{split}
     \sup_{t\in[0,T]}\|v(t)&\|_{H^1(\O)}^2 +\|v\|_{L^2(0,T;H^2(\O;\R^2))}^2 \\[-6pt]
     &\lesssim  \hat{C}_{3,v_0,\theta_0,T,\varphi}\exp\big(c_3(1+{C}_{2,v_0,\theta_0,T,\varphi})\big({C}_{2,v_0,\theta_0,T,\varphi}(T+1)+\|\varphi\|_{L^2(0,T;\ell^2)}^2\big)\big) \\
     &\eqqc {C}_{3,v_0,\theta_0,T,\varphi},
\end{split}\label{eq:H1aftergronwall}
\end{align}
for a constant $c_3>0$ independent of $\varphi$, $T$, $v$ and $\theta$, and  
\begin{equation}\label{eq:def C3 hat}
    \hat{C}_{3,v_0,\theta_0,T,\varphi}\ceqq \hat{C}_{v_0}+  {C}_{1,v_0,\theta_0,T,\varphi}+{C}_{2,v_0,\theta_0,T,\varphi} +{C}_{2,v_0,\theta_0,T,\varphi}^2+\|\Xi\|_{L^2(0,T;L^2(\O))}^2. 
\end{equation}
Recalling the definitions of ${C}_{1,v_0,\theta_0,T,\varphi}$, ${C}_{2,v_0,\theta_0,T,\varphi}$ and $\hat{C}_{3,v_0,\theta_0,T,\varphi}$ (see \eqref{eq:skeleton L^2 est}, \eqref{eq: def exp exp constant}, \eqref{eq:def C3 hat}), we note that ${C}_{3,v_0,\theta_0,T,\varphi}$ is non-decreasing in $\|\varphi\|_{L^2(0,T;\ell^2)}$ and $T$, thus one can find  a function $C_{(v_0,\theta_0)}\col\R_+\times\R_+\to\R_+$ satisfying $C_{(v_0,\theta_0)}(T,\|\varphi\|_{L^2(0,T;\ell^2)})={C}_{3,v_0,\theta_0,T,\varphi}$, with $C_{(v_0,\theta_0)}$ non-decreasing in both components. Together with \eqref{eq:H1aftergronwall}, this completes the proof of Proposition \ref{prop:skeleton}.  \qed

\section{Controlled stochastic equation}\label{sec:tiltedspde}
 
In this section, we prove Proposition \ref{prop:tilt}, containing the a priori estimate for the stochastic equation \eqref{eq:primitive tilde and bar tilt}. The proof structure will be analogous to Section \ref{sec:skeleton}: the proof is given in three steps of estimates, distributed over Subsections \ref{sub:tilt L2}--\ref{sub:tilt H1}.   
But in contrast, the estimates here are all stochastic due to the stochastic processes $(\bphieps)$, and rather than e.g.\  applying the deterministic Gr\"onwall lemma to the expectation, we will need to exploit a stochastic Gr\"onwall lemma in each step, and work with delicate estimates that hold only in probability.  

Throughout this section, we assume that $(\bphieps)$ and $(v,\theta)$ are as in Proposition \ref{prop:tilt}: we assume that  $(v_0,\theta_0)\in \H^1(\O)\times L^2(\O)$ and   $T>0$ are given. Moreover,  $(\bphieps)_{\eps>0}$ is a collection of predictable stochastic processes bounded by $K$ in the sense of \eqref{eq: as bdd K}. 
Moreover, we assume that for each $\eps\in(0,1]$,  $(v^\eps,\theta^\eps)$ is an $L^2$-strong-weak solution to \eqref{eq:primitive tilde and bar tilt} on $[0,T]$ corresponding to $\bphieps$, which exists uniquely by virtue of Remark \ref{rem: tilt well posed}.

\subsection{$L^2$-estimates for the horizontal velocity and temperature}\label{sub:tilt L2}
We start by proving $L^2$-estimates for the velocities $v^\eps$ and temperatures $\theta^\eps$, corresponding to the analytically weak setting. 

By the It\^o formula in \cite[Th.\ 3.1 p.\ 57, Th.\ 3.3 p.\ 59]{pardoux} (see also \cite[Lem.\ A.3]{TV24}) on the Gelfand triple $(\mathbb{H}^1_{\n}(\O)\times H^1(\O),\mathbb{L}^2(\O)\times L^2(\O),\mathbb{H}^{-1}(\O)\times H^{-1}(\O))$, and 
with the notations $I_1^{\scriptscriptstyle L^2}$, $I_2^{\scriptscriptstyle L^2}$ defined in \eqref{eq:L^2 norm v} and \eqref{eq:L^2 norm theta}, we have a.s.:
\begin{align}
     \|v^\eps(t)\|_{{L}^2(\O)}^2- \|v_0\|_{{L}^2(\O)}^2   
    &= 2I_1^{\scriptscriptstyle L^2}(t,v^\eps,\theta^\eps,\bphieps)\notag\\
     &+2\sqrt{\eps}\sum_{n\geq 1} \int_0^t \big\<v^\eps, \hhp \big[(\phi_{n}\cdot\nabla) v^\eps  +\gvn(\cdot,v^\eps)\big]  \dd \beta^n_s\big\?_{L^2(\O)}\notag\\ 
      &+\eps \int_0^t \big\|\big(\hhp \big[(\phi_{n}\cdot\nabla) v^\eps  +\gvn(\cdot,v^\eps)\big]\big)_n\big\|_{L^2(\O;\ell^2)}^2\dd s,\label{eq:J1}\\
     \|\theta^\eps(t,\om)\|_{{L}^2(\O)}^2- \|\theta_0\|_{{L}^2(\O)}^2   
    &= 2 I_2^{\scriptscriptstyle L^2}(t,v^\eps,\theta^\eps,\bphieps)\notag\\
    &+ 2\sqrt{\eps}\sum_{n\geq 1} \int_0^t \big\<\theta^\eps, (\psi_n\cdot \nabla) \theta^\eps+\gtn(\cdot,v^\eps,\theta^\eps,\nabla v^\eps) \dd \beta^n_s\big\?_{L^2(\O)}\notag\\
    &+\eps \int_0^t \big\| \big((\psi_n\cdot \nabla) \theta^\eps+\gtn(\cdot,v^\eps,\theta^\eps,\nabla v^\eps)\big)_n \big\|_{L^2(\O;\ell^2)}^2\dd s. \label{eq:J2}
\end{align}
Note that the functions $I_1^{\scriptscriptstyle L^2}$, $I_2^{\scriptscriptstyle L^2}$ were introduced for the deterministic skeleton equation. But now we apply these functions pointwise in $\om\in\Om$  ($v^\eps$, $\theta^\eps$ and $\bphieps$  all depend on $\om$). 

Estimates for $I_1^{L^2}$ and $I_2^{L^2}$ were already derived in Subsection \ref{sub:skeleton L2} and were summarized by \eqref{eq:skeleton L^2 est prep}. 
Applying these estimates for each fixed $\om\in\Om$, we find that pointwise on $\Om$:
\begin{align*}
I_1^{\scriptscriptstyle L^2}(t,v^\eps,\theta^\eps,\bphieps)+I_2^{\scriptscriptstyle L^2}(t,v^\eps,\theta^\eps,\bphieps)
&\lesssim
R_T+\int_0^t \big(1+\|\bphieps(s)\|_{\ell^2}^2\big) \Big(\|v^\eps(s)\|_{{L}^2(\O)}^2+\|\theta^\eps(s)\|_{{L}^2(\O)}^2\Big)\dd s\\
&\qquad-\alpha\Big[\|\nabla v^\eps\|_{L^2(0,t;{L}^2(\O))}^2+ \|\nabla \theta^\eps\|_{L^2(0,t;{L}^2(\O))}^2\Big],
\end{align*}
for some constants $\alpha>0$ and $R_T\geq 0$. 
Furthermore, the quadratic variation terms in the equations for $v^\eps$ and $\theta^\eps$ can be estimated using Remark \ref{rem:assumptionslocal} and Assumption  \ref{it:sublinearity_Gforce}:   
\begin{align*}
&\eps\big\|\big(\hhp \big[(\phi_{n}\cdot\nabla) v^\eps  +\gvn(\cdot,v^\eps)\big]\big)_n\big\|_{L^2(\O;\ell^2)}^2\lesssim
\eps \|\nabla v^\eps\|_{L^2(\O)}^2+\eps \big(\|\Xi\|_{L^2(\O)}^2+ \|v^\eps\|_{L^2(\O)}^2\big),\\ 
& \eps\big\| \big((\psi_n\cdot \nabla) \theta^\eps +\gtn(\cdot,v^\eps,\theta^\eps,\nabla v^\eps)\big)_n \big\|_{L^2(\O;\ell^2)}^2\dd s \lesssim  
 \eps \|\nabla \theta^\eps\|_{L^2(\O;\R^{3})}^2+\eps \big(\|\Xi\|_{L^2(\O)}^2+\\
&\hspace{8.2cm} \|v^\eps\|_{L^2(\O)}^2+\|\theta^\eps\|_{L^2(\O)}^2+\|\nabla v^\eps\|_{L^2(\O)}^2\big)
\end{align*}  
The remaining terms in the equations for $v^\eps$ and $\theta^\eps$ are martingales. Thus, we can pick $\eps_0'' \in(0,1)$ sufficiently small so that for all $\eps\in (0,\eps_0'')$ and $\gamma>0$, we have by the stochastic Gr\"onwall lemma \cite[Cor.\ 5.4b), (50)]{geiss24} and by \eqref{eq: as bdd K}:  
\begin{align}\label{eq: tilt SPDE L^2}
\P\Big(\sup_{t\in[0,T]}y_\eps(t)>\gamma\Big) \leq C_0\frac{\exp(C_1(T+K))}{\gamma}\big(R_T+\|v_0\|_{{L}^2(\O)}^2+\|\theta_0\|_{{L}^2(\O)}^2\big)\eqqc \frac{C_{v_0,\theta_0,K,T}}{\gamma},
\end{align} 
where 
\[
y_\eps(t)\ceqq \|v^\eps(t)\|_{{L}^2(\O)}^2+\|\theta^\eps(t,\om)\|_{{L}^2(\O)}^2+\|\nabla v^\eps\|_{L^2(0,t;{L}^2(\O))}^2+ \|\nabla \theta^\eps\|_{L^2(0,t;{L}^2(\O))}^2.
\]

\subsection{Intermediate estimates for barotropic and baroclinic modes}\label{sub:tilt intermed}
 
Now we will establish intermediate estimates for the horizontal velocities $v^\eps$. 
Using the notation introduced in \eqref{eq: def bar tilde}, let us write $\bar{v}^\eps\ceqq \overline{v^\eps}$ for the barotropic mode and $\tilde{v}^\eps\ceqq \wtveps={v}^\eps-\bar{v}^\eps$ for the baroclinic mode. Then we have the decomposition $v^\eps=\bar{v}^\eps+\tilde{v}^\eps$, and similarly as in Subsection \ref{sub:skeleton intermed}, we will derive  $H^1$-estimates for $\bar{v}^\eps$, $L^2$-estimates for $\pz v^\eps$ and $L^4$-estimates for $\tilde{v}^\eps$, but now all in probability.

Similar to \eqref{eq:primitive tilde and bar}  and to \cite[(5.21)]{AHHS22}, one can show that the pair $(\bar{v}^\eps,\tilde{v}^\eps)$ is a solution to the following system:
\begin{align}\label{eq:primitive tilde and bar tilt}
\begin{cases}
 \bar{v}^\eps  -\Delta_{\h} \bar{v}^\eps\dd t= \pr\big[-(\bar{v}^\eps\cdot \nabla_{\h})\bar{v}^\eps- \force(\tilde{v}^\eps) + \overline{\fvt^\eps}+ \mathcal{P}_{\gamma,\phi}(\cdot,v^\eps)\big]\dd t \hspace{3cm}&\\
\qquad\qquad\qquad\qquad\qquad+\sum_{n\geq 1}\pr \Big[(\phi_{n,\h}\cdot\nabla_{\h}) \bar{v}^\eps+\overline{\phi^3_n \pz  v^\eps }  +\overline{\gvtn^\eps}\Big] \bphieps_n \dd t\\
\qquad\qquad\qquad\qquad\qquad\qquad+\sqrt{\eps}\sum_{n\geq 1}\Big(\pr  (\phi_{n,\h}\cdot\nabla_{\h}) \bar{v}^\eps+\overline{\phi^3_n \pz  v^\eps }  +\overline{\gvtn^\eps}\Big)\dd \beta^n_t &\text{ on }\Tor^2, \\
\bar{v}^\eps(\cdot,0)=\bar{v}_0& \text{ on }\Tor^2,\\
 \tilde{v}^\eps  -\Delta \tilde{v}^\eps\dd t=\big[-(\tilde{v}^\eps\cdot \nabla_{\h})\tilde{v}^\eps+\forcetwo(\tilde{v}^\eps,\bar{v}^\eps) + \widetilde{\fvt^\eps} \,\big]  \dd t
+\sum_{n\geq 1}\big[(\phi_{n}\cdot\nabla) \tilde{v}^\eps-\overline{\phi^3_n \pz  v^\eps } +\widetilde{\gvtn^\eps}\,\big] \bphieps_n \dd t\\
\qquad\qquad\qquad\qquad\qquad\qquad+\sum_{n\geq 1}\big[(\phi_{n}\cdot\nabla) \tilde{v}^\eps-\overline{\phi^3_n \pz  v^\eps } +\widetilde{\gvtn^\eps}\,\big]\dd \beta^n_t&\text{ on }\O,\\
\pz  \tilde{v}^\eps(\cdot,-h)=\pz  \tilde{v}^\eps(\cdot,0)=0&\text{ on }\ \Tor^2,\\
\tilde{v}^\eps(\cdot,0)=\tilde{v}_0\ceqq v_0-\bar{v}_0&\text{ on }\O,
\end{cases}
\end{align} 
with $\force$, $\forcetwo$ as defined in \eqref{eq: abbrevs} and 
\begin{align*}
&f^\eps\ceqq \mathcal{J}_\kappa{\theta^\eps}+F_v(\cdot,v^\eps,\theta^\eps,\nabla v^\eps)+\mathcal{P}_{\gamma,G} (t,v^\eps),
\qquad 
g_n^\eps\ceqq G_{v,n}(\cdot,v^\eps). 
\end{align*}
Using the notations of \eqref{eq: Ito bar v def}, \eqref{eq: Ito pz v} and  \eqref{eq: Ito tilde v L4}, we have by the It\^o  formula (see the proof of \cite[Lem.\ A.6]{AV25survey}, \cite[\S3]{dareiotis}): 
\begin{align}
\|\bar{v}^\eps(t)\|_{{H}^1(\O)}^2- \|\bar{v}_0\|_{{H}^1(\O)}^2   &= 2\bar{I}_{1}^{\scriptscriptstyle H^1}(t,v^\eps,\theta^\eps,\bphieps)\notag\\
&\qquad+2\sqrt{\eps}\sum_{n\geq 1}\int_0^t \big\<\bar{v}^\eps,\big(\pr  (\phi_{n,\h}\cdot\nabla_{\h}) \bar{v}^\eps+\overline{\phi^3_n \pz  v^\eps }  +\overline{\gvtn^\eps}\big)\dd \beta^n_s\big\?_{H^1(\Tor^2)}\notag\\
&\qquad +\eps\int_0^t \Big\|\Big(\pr  [(\phi_{n,\h}\cdot\nabla_{\h}) \bar{v}^\eps+\overline{\phi^3_n \pz  v^\eps }  +\overline{\gvtn^\eps}]\Big)_n \Big\|_{H^1(\Tor^2;\ell^2)}^2\dd s, \label{eq: quadr bar v}
\\
\|\pz v^\eps\|_{L^2(\O)}^2  - \|\pz v_0\|_{L^2(\O)}^2  & = 2I_{1}^{\scriptscriptstyle \pz}(t,v^\eps,\theta^\eps,\bphieps)\notag\\
&+2\sqrt{\eps}\sum_{n\geq 1} \int_0^t \big\<\pz v^\eps, \pz\hhp \big[(\phi_{n}\cdot\nabla) v^\eps  +\gvn(\cdot,v^\eps)\big]  \dd \beta^n_s\big\?_{L^2(\O)}\notag\\
&+\eps \int_0^t \big\|\big(\pz\hhp \big[(\phi_{n}\cdot\nabla) v^\eps  +\gvn(\cdot,v^\eps)\big]\big)_n\big\|_{L^2(\O;\ell^2)}^2\dd s, \label{eq: quadr pz v}
\\
\|\tilde{v}^\eps(t)\|_{L^4(\O)}^4- \|v_0-\bar{v}_0\|_{L^4(\O)}^4 &= 4\tilde{I}_{1}^{\scriptscriptstyle L^4}(t,v^\eps,\theta^\eps,\bphieps) \notag\\
&+4\sqrt{\eps}\sum_{n\geq 1}\int_0^t \big\<|\tilde{v}^\eps|^2\tilde{v}^\eps,\big((\phi_{n}\cdot\nabla) \tilde{v}^\eps-\overline{\phi^3_n \pz  v^\eps } +\widetilde{\gvtn^\eps}\,\big)\dd \beta^n_s\big\?_{L^2(\O)}\notag\\
&+2\eps \int_0^t\int_\O |\tilde{v}^\eps|^2\Big\| \Big((\phi_{n}\cdot\nabla) \tilde{v}^\eps-\overline{\phi^3_n \pz  v^\eps } +\widetilde{\gvtn^\eps}\,\Big)_n\Big\|_{\ell^2}^2\dd x\dd s \label{eq: quadr tilde v}\\
&+4\eps \int_0^t\int_\O \sum_{n\geq 1}\big|\tilde{v}^\eps \cdot \big((\phi_{n}\cdot\nabla) \tilde{v}^\eps-\overline{\phi^3_n \pz  v^\eps } +\widetilde{\gvtn^\eps}\big)\big|^2\dd x\dd s. \label{eq: quadr tilde v 2nd part}
\end{align} 
Now, let us define 
\[
H_c^\eps(t)\ceqq  \|\bar{v}^\eps(t)\|_{H^1(\Tor^2)}^2+\|\pz v^\eps(t)\|_{L^2(\O)}^2+c\|\tilde{v}^\eps(t)\|_{L^4(\O)}^4.
\] 
Applying the  estimates from Subsection \ref{sub:skeleton intermed} (summarized by \eqref{eq: Gronwall prep intermed}) for each fixed $\om\in\Om$, we find that pointwise on $\Om$, we have 
\begin{align*}
|\bar{I}_{1}^{\scriptscriptstyle H^1}(t&,v^\eps,\theta^\eps,\bphieps)|+|I_{1}^{\scriptscriptstyle \pz}(t,v^\eps,\theta^\eps,\bphieps)|+c|\tilde{I}_{1}^{\scriptscriptstyle L^4}(t,v^\eps,\theta^\eps,\bphieps)|\\
&\lesssim_c  -\int_0^t\|\bar{v}^\eps\|_{H^2(\Tor^2)}^2\dd s-\int_0^t \|\nabla\pz v^\eps\|_{L^2(\O)}^2\dd s -\int_0^t\||\tilde{v}^\eps||\nabla\tilde{v}^\eps|\|_{L^2(\O)}^2\dd s\\
&+\int_0^t (1+H^\eps_c)\Big[1+\|\Xi\|_{L^2(\O)}^2+\|\theta^\eps\|_{L^2(\O)}^2+\|\nabla \theta^\eps\|_{L^2(\O)}^2 +\|v^\eps\|_{L^2(\O)}^2+\|\nabla v^\eps\|_{L^2(\O)}^2\\
&\qquad\qquad\qquad\qquad\qquad\qquad\qquad\qquad\qquad\qquad\qquad+\|v^\eps\|_{L^2(\O)}^2 \|\nabla v^\eps\|_{L^2(\O)}^2+\|\bphieps\|_{\ell^2}^2)\Big]\dd s,
\end{align*} 
where we use that $c>0$ in \eqref{eq: Gronwall prep intermed} was independent of $\varphi$, $T$, $v_0$, $\theta_0$, $v$ and $\theta$ (see the lines below \eqref{eq: Gronwall prep intermed}). 

Now we turn to the quadratic variation terms for $\bar{v}^\eps$, $\pz v^\eps$ and $\tilde{v}^\eps$, starting with \eqref{eq: quadr bar v}. Recall \eqref{eq: grad hhp} and note that $\hhp_{\h} f=\hhp f$ if $f$ is $x_3$-independent. Therefore, by the $x_3$-independence of $\phi_{n,\h}$ (Assumption \ref{it:independence_z_variable}) and by  Remark \ref{rem: removing bar or tilde}, we have 
\begin{align*}
     \Big\|\Big(\pr  [(\phi_{n,\h}\cdot\nabla_{\h}) \bar{v}^\eps+\overline{\phi^3_n \pz  v^\eps }+&\overline{\gvtn^\eps} ]\Big)_n \Big\|_{H^1(\Tor^2;\ell^2)}^2
     =\Big\|\Big(\hhp  [(\phi_{n,\h}\cdot\nabla_{\h})  \bar{v}^\eps+\overline{\phi^3_n \pz  v^\eps }  +\overline{\gvtn^\eps}]\Big)_n \Big\|_{H^1(\Tor^2;\ell^2)}^2 \\ 
     &\lesssim \|\big( (\phi_{n,\h}\cdot\nabla_{\h}) \bar{v}^\eps\big)_n \|_{L^2(\Tor^2;\ell^2)}^2 +\|\big(\nabla_{\h} (\phi_{n,\h}\cdot\nabla_{\h}) \bar{v}^\eps\big)_n \|_{L^2(\Tor^2;\ell^2)}^2\\
     &\qquad\qquad+ \|(\phi^3_n \pz  v^\eps)_n\|_{H^1(\Tor^2;\ell^2)}^2+\|(\gvtn^\eps)_n\|_{H^1(\Tor^2;\ell^2)}^2.
\end{align*} 
Now, by Remark \ref{rem:assumptionslocal},
\[
\|\big((\phi_{n,\h}\cdot\nabla_{\h}) \bar{v}^\eps\big)_n \|_{L^2(\Tor^2;\ell^2)}^2\lesssim \|\bar{v}^\eps\|_{H^1(\Tor^2)}^2,
\]
and by integration by parts, Remark \ref{rem:assumptionslocal} and Assumption  \ref{it:well_posedness_primitive_phi_smoothness}, we have 
\begin{align*}
\|\big(\nabla_{\h} (\phi_{n,\h}\cdot\nabla_{\h}) \bar{v}^\eps\big)_n \|_{L^2(\Tor^2;\ell^2)}^2
&=
\sum_{j=1}^2\| \big((\partial_j\phi_{n,\h}\cdot\nabla_{\h}) \bar{v}^\eps \big)_n\|_{L^2(\Tor^2;\ell^2)}^2+\| \big((\phi_{n,\h}\cdot\partial_j\nabla_{\h}) \bar{v}^\eps\big)_n \|_{L^2(\Tor^2;\ell^2)}^2\\
&\lesssim 
\sum_{j=1}^2\|(\partial_j\phi_{n,\h})_n\|_{L^{3+\delta}(\Tor^2;\ell^2)}^2\|\nabla_{\h}\bar{v}^\eps \|_{L^\rho(\Tor^2)}^2+ \|\bar{v}^\eps\|_{H^2(\Tor^2)} \\
&\lesssim \|\bar{v}^\eps\|_{H^2(\Tor^2)},
\end{align*}
where  $\rho\in(0,6)$ satisfies $\frac{1}{2}=\frac{1}{3+\delta}+\frac{1}{\rho}$, thus $H^2(\Tor^2)\into H^{1,\rho}(\Tor^2)$. 
Next, recall \eqref{eq:pzz est}:  
\begin{align*}
    \|(\phi^3_n \pz  v^\eps)_n\|_{H^1(\O;\ell^2)}^2& \lesssim  \| \pz v\|_{L^2(\O)}^2+\| \nabla\pz v\|_{L^2(\O)}^2.
\end{align*} 
Lastly, by  Remark \ref{rem:g_n H1},
\[
\|(\gvtn^\eps)_n\|_{H^1(\O;\ell^2)}^2\lesssim  \|\Xi\|_{L^2(\O)}^2+\|v^\eps\|_{H^1(\O)}^2.  
\] 
Combining the above estimates, we conclude that
\begin{align*}
    \eps\int_0^t \Big\|\Big(\pr & (\phi_{n,\h}\cdot\nabla_{\h}) \bar{v}^\eps+\overline{\phi^3_n \pz  v^\eps }  +\overline{\gvtn^\eps}\Big)_n \Big\|_{H^1(\Tor^2;\ell^2)}^2\dd s\\
    &\lesssim \eps \|\bar{v}^\eps\|_{H^2(\Tor^2)} +\eps \| \pz v\|_{L^2(\O)}^2+\eps\| \nabla\pz v\|_{L^2(\O)}^2+\eps\|\Xi\|_{L^2(\O)}^2+ \eps\|v^\eps\|_{H^1(\O)}^2. 
\end{align*}
Now we turn to \eqref{eq: quadr pz v} for $\pz v^\eps$. 
By Remarks \ref{rem:g_n H1} and \ref{rem:assumptionslocal},   $\pz \hhp f=\pz f$, $(\phi_{n,\h})$ is independent of $x_3$ (Assumption \ref{it:independence_z_variable}) and by \eqref{eq:pzz est}, we obtain 
\begin{align*}
   & \eps \int_0^t  \big\|\big(\pz\hhp \big[(\phi_{n}\cdot\nabla) v^\eps  +\gvn(\cdot,v^\eps)\big]\big)_n\big\|_{L^2(\O;\ell^2)}^2\dd s \\
    &\leq  \eps \int_0^t  \big\| (\phi_{n,\h}\cdot\nabla_{\h})\pz v^\eps  +\pz(\phi_{n}^3\pz v^\eps)  +\gvn(\cdot,v^\eps)\big]\big)_n\big\|_{L^2(\O;\ell^2)}^2\dd s \\
    &\lesssim \eps\int_0^t \| \nabla \pz v^\eps\|_{L^2(\O)}^2  +\| \pz v\|_{L^2(\O)}^2+\|\Xi \|_{L^2(\O)}^2+ \|v^\eps\|_{H^1(\O)}^2 \dd s. 
\end{align*}
Concerning the term \eqref{eq: quadr tilde v} for $\tilde{v}^\eps$,  
 we have by Remark \ref{rem:assumptionslocal}:
\begin{align*}
\int_\O |\tilde{v}^\eps|^2 \big\| \big((\phi_{n}\cdot\nabla) \tilde{v}^\eps \big)_n\big\|_{\ell^2}^2\dd x   
&    \lesssim  \||\tilde{v}^\eps|  |\nabla \tilde{v}^\eps|\|_{L^2(\O)}^2 
\end{align*}
and, using also that  $[L^2(\Tor^2),H^1(\Tor^2)]_{1/2}=H^{1/2}(\Tor^2)\into L^4(\Tor^2)$ and using Remark \ref{rem: removing bar or tilde} and \eqref{eq:pzz est},  
\begin{align*}
&\int_\O |\tilde{v}^\eps|^2 \big\| \big(\overline{\phi^3_n \pz  v^\eps }\big)_n\big\|_{\ell^2}^2\dd x   
\lesssim  \|\tilde{v}^\eps\|_{L^4(\O)}^2  \big\| \big(\overline{\phi^3_n \pz  v^\eps }\big)_n\big\|_{L^4(\Tor^2;\ell^2)}^2\\
&\lesssim \|\tilde{v}^\eps\|_{L^4(\O)}^2\big\| \big(\overline{\phi^3_n \pz  v^\eps }\big)_n\big\|_{L^2(\Tor^2;\ell^2)}\Big( \big\| \big(\overline{\phi^3_n \pz  v^\eps }\big)_n\big\|_{L^2(\Tor^2;\ell^2)}+\big\| \big( \nabla_{\h}(\overline{\phi^3_n \pz  v^\eps }) \big)_n\big\|_{L^2(\Tor^2;\ell^2)}\Big)\\
&\lesssim \|\tilde{v}^\eps\|_{L^4(\O)}^2\|\pz v^\eps\|_{L^2(\O)}( \|\pz v^\eps\|_{L^2(\O)}+\|\nabla(\phi_n^3\pz v^\eps)\|_{L^2(\O)})\\
&\lesssim \|\tilde{v}^\eps\|_{L^4(\O)}^4\|\nabla v^\eps\|_{L^2(\O)}^2+ \|\pz v^\eps\|_{L^2(\O)}^2+\|\nabla\pz v^\eps\|_{L^2(\O)}^2.
\end{align*} 
Next, by  H\"older's inequality, Remark \ref{rem: removing bar or tilde} and the embedding $H^1(\O)\into L^4(\O)$, we have
\begin{align*}
\int_\O |\tilde{v}^\eps|^2 \big\| \big(\widetilde{\gvtn^\eps}\big)_n\big\|_{\ell^2 }^2\dd x   
&    \lesssim \|\tilde{v}^\eps \|_{L^4(\O)}^2\|\big({\gvtn^\eps}\big)_n\big\|_{L^4(\O;\ell^2)}^2     \lesssim (1+\|\tilde{v}^\eps \|_{L^4(\O)}^4)(\|\Xi\|_{L^2(\O)}^2+\|v^\eps\|_{H^1(\O)}^2). 
\end{align*}

Collecting the above estimates and noting that  \eqref{eq: quadr tilde v 2nd part} can be estimated by twice the quantity of \eqref{eq: quadr tilde v}, we conclude that  for all $\eps\in(0,\eps_0'')$: 
\begin{align*}
 &   2\eps \int_0^t\int_\O |\tilde{v}^\eps|^2\Big\| \Big((\phi_{n}\cdot\nabla) \tilde{v}^\eps-\overline{\phi^3_n \pz  v^\eps } +\widetilde{\gvtn^\eps}\,\Big)_n\Big\|_{\ell^2}^2\dd x\dd s \\
 &\qquad\qquad+
 4\eps \int_0^t\int_\O \sum_{n\geq 1}\big|\tilde{v}^\eps \cdot \big((\phi_{n}\cdot\nabla) \tilde{v}^\eps-\overline{\phi^3_n \pz  v^\eps } +\widetilde{\gvtn^\eps}\big)\big|^2\dd x\dd s\\
&    \lesssim  \eps \int_0^t \||\tilde{v}^\eps|  |\nabla \tilde{v}^\eps|\|_{L^2(\O)}^2+\|\nabla\pz v^\eps\|_{L^2(\O)}^2 \dd s\\
&\qquad+\eps    \int_0^t \|\pz v^\eps\|_{L^2(\O)}^2+ (1+\|\tilde{v}^\eps \|_{L^4(\O)}^4)(\|\Xi\|_{L^2(\O)}^2+\|v^\eps\|_{H^1(\O)}^2)  \dd s.
\end{align*} 
Now, for a sufficiently small $\eps_0'\in (0,\eps_0'')\subset(0,1)$, the above estimates give for all $\eps\in(0,\eps_0')$: 
\begin{align}
z^\eps(t)\lesssim C_{v_0}+\int_0^t (1+ z^\eps)\Big[1+\|\Xi\|_{L^2(\O)}^2&+\|\theta^\eps\|_{H^1(\O)}^2 +\|v^\eps\|_{H^1(\O)}^2 \notag\\
&+\|v^\eps\|_{L^2(\O)}^2 \|\nabla v^\eps\|_{L^2(\O)}^2+\|\bphieps\|_{\ell^2}^2)\Big]\dd s+M^\eps(t), \label{eq:z^epsest}
\end{align}
where $M^\eps$ is a continuous local martingale and 
\begin{align*}
z^\eps(t)\ceqq & \,\|\bar{v}^\eps(t)\|_{H^1(\Tor^2)}^2+\|\pz v^\eps(t)\|_{L^2(\O)}^2+\|\tilde{v}^\eps(t)\|_{L^4(\O)}^4\\
&+\int_0^t\|\bar{v}^\eps(s)\|_{H^2(\Tor^2)}^2\dd s+\int_0^t \|\nabla\pz v^\eps(s)\|_{L^2(\O)}^2\dd s+\int_0^t\||\tilde{v}^\eps(s)||\nabla\tilde{v}^\eps(s)|\|_{L^2(\O)}^2\dd s.
\end{align*}
Putting 
\[
a^\eps \ceqq\int_0^\cdot\Big[1+\|\Xi\|_{L^2(\O)}^2+\|\theta^\eps\|_{H^1(\O)}^2  +\|v^\eps\|_{H^1(\O)}^2 +\|v^\eps\|_{L^2(\O)}^2 \|\nabla v^\eps\|_{L^2(\O)}^2+\|\bphieps\|_{\ell^2}^2)\Big]\dd s,
\]
and applying the stochastic Gr\"onwall lemma \cite[Cor.\ 5.4b), (50)]{geiss24}  (note that $a^\eps$ is left-continuous and adapted with constant filtration $(\F_T)$, thus predictable),  
we   obtain for all $\eps\in(0,\eps_0')$ and $\gamma\geq 1$: 
\begin{align}
\P\Big(\sup_{t\in[0,T]}z_\eps(t)>\gamma\Big)& \leq  \frac{\exp(\frac{1}{2}\log\gamma)}{\gamma}\E[C_2(C_{v_0}+a^\eps(T))\wedge \gamma^{1/4}]  \notag\\
&+\P\big(C_2(C_{v_0}+a^\eps(T))> \gamma^{1/4}  \big)+\P\big(a^\eps(T)>\frac{\frac{1}{2}\log\gamma}{C_2}\big)\notag\\ 
&\leq \gamma^{-1/4} +\gamma^{-1/4} \hat{C}_{v_0,\theta_0,K,T}+(\frac{1}{2}\log\gamma)^{-1}\hat{C}_{v_0,\theta_0,K,T}. \label{eq: tilt SPDE intermed} 
\end{align} 
for some constant $\hat{C}_{v_0,\theta_0,K,T}$ depending on $C_2, C_{v_0},{C}_{v_0,\theta_0,K,T}, T,K$ and $\|\Xi\|_{L^2(0,T;L^2(\O)}$. 
In the  last line, we used \eqref{eq: tilt SPDE L^2} and the fact  that $\gamma\geq 1$ and
\[
a^\eps(T)\leq T+\|\Xi\|_{L^2(0,T;L^2(\O)}^2+y_\eps(T)+(T+y_\eps(T))\sup_{t\in[0,T]}y_\eps(t)+K^2. 
\] 

\subsection{$H^1$-estimate for the horizontal velocity}\label{sub:tilt H1} 
 
Now we use the intermediate estimate \eqref{eq: tilt SPDE intermed} and $L^2$-estimate \eqref{eq: tilt SPDE L^2} to prove that
$\lim_{\gamma\to\infty}\sup_{\eps\in(0,\eps_0)}\P(\|X^\eps\|_{\MR(0,T)}>\gamma)=0$, 
for some $\eps_0>0$, where $\MR(0,T)$ was defined in \eqref{eq: def MR space}.  Note that \eqref{eq: tilt SPDE L^2} already yields the needed estimates for $\theta$. Thus it remains to prove that 
\begin{equation}\label{eq:tilt to show H1}
\lim_{\gamma\to\infty}\sup_{\eps\in(0,\eps_0)}\P(\|v^\eps\|_{C([0,T];\H^1(\O))} +  \|v^\eps\|_{L^2(0,T;\H_{\n}^2(\O))}>\gamma)=0. 
\end{equation} 

Using the notation of \eqref{eq: Ito v H1 def}, we have 
\begin{align}
\|v^\eps(t)\|_{{H}^1(\O)}^2- \|v_0\|_{{H}^1(\O)}^2 & = 2I_1^{H^1}(t,v^\eps,\theta^\eps,\bphieps)\notag\\
&\qquad+2\sqrt{\eps}\sum_{n\geq 1} \int_0^t \big\<v^\eps, \hhp \big[(\phi_{n}\cdot\nabla) v^\eps  +\gvn(\cdot,v^\eps)\big]  \dd \beta^n_s\big\?_{H^1(\O)}\notag\\ 
&\qquad+\eps \int_0^t \big\|\big(\hhp \big[(\phi_{n}\cdot\nabla) v^\eps  +\gvn(\cdot,v^\eps)\big]\big)_n\big\|_{H^1(\O;\ell^2)}^2\dd s. \label{eq:J6}
\end{align} 
Applying the estimates from Subsection \ref{sub:skeleton H1}  for each fixed $\om\in\Om$, and using the expression and notation from \eqref{eq:H1gronwallprep} and \eqref{eq:defsgronwallfunc}), we find that 
\begin{align*}
   I_1^{H^1}&(t,v^\eps,\theta^\eps,\bphieps) \lesssim - \|v^\eps\|_{L^2(0,t;H^2(\O))}^2+
      Y_{v^\eps,\theta^\eps}(t)+Z_{v^\eps}(t)+ Z_{v^\eps}(t)^2+\|\Xi\|_{L^2(0,T;L^2(\O))}^2 \\
     &+\int_0^t  (1+\hat{Z}_{v^\eps}(s))\big(\|\bar{v}^\eps(s)\|_{H^1(\Tor^2)}^2+ \|\pz v^\eps(s)\|_{H^1(\O)}^2+\|\bphieps(s)\|_{\ell^2}^2\big)\sup_{r\in[0,s]}\|v^\eps(r)\|_{H^1(\O)}^2\dd s. 
\end{align*}   
Furthermore, we have by   \eqref{eq: coerc est} and Remarks \ref{rem:assumptionslocal} and \ref{rem:g_n H1}:
\begin{align*}
& \eps \int_0^t \big\|\big(\hhp \big[(\phi_{n}\cdot\nabla) v^\eps  +\gvn(\cdot,v^\eps)\big]\big)_n\big\|_{H^1(\O;\ell^2)}^2\dd s \lesssim \eps\int_0^t \|v^\eps\|_{H^2(\O)}^2+\|\Xi\|_{L^2(\O)}^2\dd s.
\end{align*}
Now let 
\[
x^\eps(t)\ceqq \|v^\eps(t)\|_{{H}^1(\O)}^2+\int_0^t \|v^\eps \|_{{H}^2(\O)}^2\dd s.
\]
The estimates above give that for a sufficiently small $\eps_0\in (0,\eps_0')\subset(0,1)$, we have for all $\eps\in(0,\eps_0)$: 
\begin{equation}\label{eq:x^epsest}
  x^\eps(t)\lesssim C_{v_0}+ h^\eps(t)+\int_0^t \sup_{r\in[0,s]}\|v^\eps(r)\|_{H^1(\O)}^2 b^\eps(s)\dd s +M^\eps(t),
\end{equation}
where $M^\eps$ is a continuous local martingale and 
\begin{align*}
&h^\eps(t)=Y_{v^\eps,\theta^\eps}(t)+Z_{v^\eps}(t)+ Z_{v^\eps}(t)^2+\|\Xi\|_{L^2(0,T;L^2(\O))}^2,\\
&b^\eps(t)=(1+\hat{Z}_{v^\eps}(t))\big(\|\bar{v}^\eps(t)\|_{H^1(\Tor^2)}^2+ \|\pz v^\eps(t)\|_{H^1(\O)}^2+\|\bphieps(t)\|_{\ell^2}^2\big). 
\end{align*}
The stochastic Gr\"onwall lemma \cite[Cor.\ 5.4b), (50)]{geiss24} thus  gives for all $\eps\in(0,\eps_0)$ and $\gamma >0$: 
\begin{align}\label{eq:tiltH1prep}
 \P\Big(\sup_{t\in[0,T]}x^\eps(t)>\gamma\Big) &\leq  \frac{\exp(\frac{1}{2}\log\gamma)}{\gamma}\E[h^\eps(T)\wedge \gamma^{1/4}]   +\P\big(h^\eps(T) > \frac{\gamma^{1/4}}{C_3}  \big)+\P\big( \int_0^T  b^\eps\dd s>\frac{\frac{1}{2}\log\gamma}{C_3}\big)\notag\\
&\leq \gamma^{-1/4}+ \P\big(h^\eps(T) > \frac{\gamma^{1/4}}{C_3}  \big)+\P\big(\|b^\eps\|_{L^1(0,T)}>\frac{\frac{1}{2}\log\gamma}{C_3}\big),
\end{align} 
where $C_3>0$ is a constant. 
Now observe the following bounds (recall \eqref{eq:defsgronwallfunc}):
\begin{align*}
 h^\eps(t)&\leq y_\eps(t)+z_\eps(t)+z_\eps(t)^2+\|\Xi\|_{L^2(0,T;L^2(\O))}^2, \\
 \|b^\eps\|_{L^1(0,T)}&\leq (1+\sup_{t\in[0,T]}y_\eps(t))(T\sup_{t\in[0,T]}z_\eps(t)+K^2).
\end{align*}
Combining these with \eqref{eq:tiltH1prep}, and \eqref{eq: tilt SPDE L^2} and \eqref{eq: tilt SPDE intermed}, we conclude that
\[
\lim_{\gamma\to\infty}\sup_{\eps\in(0,\eps_0)}\P\Big(\sup_{t\in[0,T]}x^\eps(t)>\gamma\Big)=0. 
\]
Consequently, \eqref{eq:tilt to show H1} holds, concluding the proof of Proposition \ref{prop:tilt}. 
\qed

\section{Proof of the LDP for Stratonovich noise}\label{sec: Strat}

In this section, we prove Theorem \ref{th:stratonovich}, using the abstract LDP result of  Theorem \ref{th:LDP general Strat}.

 \begin{proof}[Proof of Theorem \ref{th:stratonovich}]
 We begin by reformulating the stochastic primitive equations with Stratonovich transport noise  \eqref{eq:primitive_Stratonovich}--\eqref{eq:primitive_Stratonovich_boundary_conditions}  in the form of an abstract stochastic evolution equation \eqref{eq:SPDEappStrat}. 
By \cite[\S8.2]{AHHS22}, the following identities hold: 
\begin{align}
\label{eq:Ito_corrections_strong_weak}
\begin{split}
\sqrt{\eps}\hhp[(\phi_n \cdot \nabla) v^\eps]\circ \dd\beta^n_t
&=\eps
\hhp\big[\Lvphi v^\eps+ \Lvp v^\eps \big] \dd t+ \sqrt{\eps}\hhp[(\phi_n \cdot \nabla) v^\eps]  \dd\beta^n_t,\\
\sqrt{\eps}(\psi_n \cdot \nabla ) \theta^\eps \circ \dd\beta_t^n 
&=\eps 
\LTphi \theta^\eps \dd t
+
\sqrt{\eps}(\psi_n \cdot \nabla ) \theta^\eps  \dd\beta_t^n,
\end{split}
\end{align}
with
\begin{align}\label{eq:def_LTp}
\begin{cases}
\Lvphi v
&\ceqq
 \sum_{i,j=1}^3 \Big(a_{\phi}^{i,j} \partial_{ij} v + \frac{1}{2}\sum_{n\geq 1}(\partial_i \phi^j_n) \phi^i_n \partial_j v\Big),\\
\Lvp v
&\ceqq\Big(\sum_{n\geq 1}\sum_{i=1}^2\partial_j \phi_n^i  (\Q[(\phi_n\cdot\nabla) v])^i \Big)_{j=1}^2, \\
\LTphi \theta
&\ceqq
\div(a_{\psi} \cdot\nabla \theta)-\frac{1}{2}\sum_{n\geq 1} (\div \,\psi_n )[ (\psi_n\cdot \nabla)\theta],
\end{cases}
\end{align}
where  $a_{\phi}=(a^{i,j}_{\phi})_{i,j=1}^3$ and $a_{\psi}=(a^{i,j}_{\psi})_{i,j=1}^3$ are defined by 
\begin{equation*}
\label{eq:def_a_phi_psi_strong_weak}
a_{\phi}^{i,j}(x)\ceqq\delta^{i,j}+\tfrac{1}{2}\textstyle{\sum_{n\geq 1}} \phi^i_n (x)\phi^j_n (x),
\quad 
a_{\psi}^{i,j} (x)\ceqq\delta^{i,j}+\tfrac{1}{2}\textstyle{\sum_{n\geq 1}} \psi^i_n (x)\psi^j_n (x),\qquad x\in\O.
\end{equation*}
In \eqref{eq:def_LTp},   $(\Q[\cdot])^i$ denotes the $i$-th component of the vector $\Q[\cdot]$. 

Now we will apply Theorem \ref{th:LDP general Strat} as follows: we use the Gelfand triple from \eqref{eq:Gelfand} and define $(A,B)$ by \eqref{eq:defAB}, where $G\ceqq 0$ and $(A_0,B_0,F)$ are defined as in \eqref{eq: defs coeff},  where we put  $\gamma_n^{l,m}=0$ for all $n\geq 1$ and $l,m\in\{1,2\}$, so  $\mathcal{P}_{\gamma,\phi}=\mathcal{P}_{\gamma,G} =0$. 
Moreover, we define $L\in \mathcal{L}(V,V^*)$ by 
\[ 
L(v,\theta)\ceqq 
\begin{pmatrix}  
\hhp\big[\Lvphi v + \Lvp v  \big] \\
\LTphi \theta 
\end{pmatrix}.
\] 
To see  that $L\in \mathcal{L}(V,V^*)$, we recall \eqref{eq:def_LTp} and use Assumption \ref{ass:locglob_Stratonovich_strong_weak}\ref{it:well_posedness_primitive_phi_psi_smoothness_Stratonovich} and Remark \ref{rem:assumptionslocal}. Lastly, we let $W\ceqq\Br_{\ell^2}$ be defined by \eqref{eq:cyl}. 
Then, \eqref{eq:Ito_corrections_strong_weak} yields that \eqref{eq:primitive_Stratonovich}--\eqref{eq:primitive_Stratonovich_boundary_conditions} is equivalent to the abstract stochastic evolution equation \eqref{eq:SPDEappStrat}. Furthermore,   assumption \ref{it:1strat} of Theorem \ref{th:LDP general Strat} is fulfilled by \cite[Th.\ 8.5]{AHHS22}, and well-posedness holds even for all $\eps\in\R$.

Next, we turn to the most difficult condition, namely, we   verify that \ref{it:3strat} of Theorem \ref{th:LDP general Strat} is satisfied. The latter can be proved with  minor adaptations of the proof of Proposition \ref{prop:tilt}, which was given in Section \ref{sec:tiltedspde}. 
Observe that Assumption \ref{it:well_posedness_primitive_parabolicity} (which is not implied by Assumption \ref{ass:locglob_Stratonovich_strong_weak}) has not been used throughout the whole of Section \ref{sec:tiltedspde}. Thus, the estimates given therein are still valid, and the proof boils down to estimating a few extra terms, which we will now list.  

In Subsection \ref{sub:tilt L2}, when the It\^o formula is applied, there is one additional term in \eqref{eq:J1} and in \eqref{eq:J2}, respectively:
\[
 2\eps\int_0^t \int_\O v^\eps \cdot \hhp[\Lvphi v^\eps + \Lvp v^\eps]\dd x\dd s\eqqc J_1(t)\quad \text{ and }\quad
 2\eps\int_0^t \int_\O \theta^\eps \LTphi \theta ^\eps \dd x\dd s\eqqc J_2(t),
\]
which can be estimated by
\begin{align*}
  J_1(t)+J_2(t) 
  &\lesssim\eps \int_0^t\|v^\eps\|_{L^2(\O)}^2+\|\nabla v^\eps\|_{L^2(\O)}^2+\|\theta^\eps\|_{L^2(\O)}^2+\|\nabla \theta^\eps\|_{L^2(\O)}^2\dd s,
\end{align*}
where we use integration by parts,  Assumption \ref{ass:locglob_Stratonovich_strong_weak}\ref{it:well_posedness_primitive_phi_psi_smoothness_Stratonovich}, H\"older's inequality (with $\frac{1}{3+\delta}+\frac{1}{\rho}=\frac{1}{2}$) and the Sobolev embedding $H^1(\O)\into L^\rho(\O)$. The remainder of Subsection \ref{sub:tilt L2} now remains valid, as $\eps_0''$ therein can be picked sufficiently small. 

In Subsection \ref{sub:tilt intermed}, equations \eqref{eq: quadr bar v}, \eqref{eq: quadr pz v}, \eqref{eq: quadr tilde v 2nd part} will respectively contain the additional terms
\begin{align*}
&J_3(t)\ceqq 2\eps\int_0^t \int_{\Tor^2} \bar{v}^\eps \cdot  \overline{\hhp[\Lvphi  {v}^\eps + \Lvp {v}^\eps]}\dd x\dd s-2\eps\int_0^t \int_{\Tor^2} \Delta_{\h}\bar{v}^\eps \cdot  \overline{\hhp[\Lvphi {v}^\eps + \Lvp {v}^\eps]}\dd x\dd s,\\
&J_4(t)\ceqq-2\eps\int_0^t \int_\O \pzz v^\eps \cdot \hhp[\Lvphi v^\eps + \Lvp v^\eps]\dd x\dd s,\\
&J_5(t)\ceqq4\eps\int_0^t \int_\O \tilde{v}^\eps|\tilde{v}^\eps|^2 \cdot \widetilde{ \hhp[\Lvphi {v}^\eps + \Lvp {v}^\eps]}\dd x\dd s.
\end{align*}
Below, we comment on how to estimate the above terms. Firstly, due to Assumption \ref{ass:locglob_Stratonovich_strong_weak}\ref{it:independence_z_variable_Stratonovich}, it follows that $\Lvp v^\eps 
= \Lvp \overline{v^\varepsilon}$. Moreover, a similar argument applies to the horizontal terms in $\Lvphi  {v}^\eps$, and therefore one can check that 
$$
J_3(t)\lesssim \eps\int_0^t\big(1+ \|\bar{v}^\eps\|_{H^2(\Tor^2)}^2+\|\partial_3 v\|_{H^1(\O)}^2\big)\dd s.
$$
As for $J_4$ and $J_5$, recall that $\wt{\hhp f}= \wt{f}$ for any $f\in L^2(\O;\R^2)$ and $
 \Lvp v^\eps 
$
does not depend on $x_3$. Hence, 
\begin{align*}
&J_4(t)=-2\eps\int_0^t \int_\O \pzz v^\eps \cdot \Lvphi v^\eps \dd x\dd s,\\
&J_5(t)=\eps\int_0^t \int_\O \tilde{v}^\eps|\tilde{v}^\eps|^2 \cdot \widetilde{\Lvphi {v}^\eps}\dd x\dd s.
\end{align*} 
Let us first consider $J_4$. Using Assumption \ref{ass:locglob_Stratonovich_strong_weak}\ref{it:well_posedness_primitive_phi_psi_smoothness_Stratonovich}\ref{it:BCpsiphi}\ref{it:psi_3_null_regularity_assumption_phi_boundary},  integration by parts in the vertical variable and the smoothness of the transport noise coefficients readily yield 
$$
J_4(t)\lesssim \eps\int_0^t\big(\| v^\eps\|_{H^1(\O)}^2+\|\nabla\pz v^\eps\|_{L^2(\O)}^2\big)\dd s,
$$
see also the proof of \cite[Th.\ 7.5]{AHHS22} for a similar estimate.  
Finally, we consider $J_5$. We focus only on the highest order differential operators in $\Lvphi$. For the contributions with only horizontal derivatives, an integration by parts shows that these can be bounded by $\eps\int_0^t\||\tilde{v}^\eps||\nabla \tilde{v}^\eps|\|_{L^2(\O)}^2\dd s$. Meanwhile, for the vertical derivatives, recall that $\pz v^\varepsilon=\pz\widetilde{v^\varepsilon}$. Hence, the boundary condition in Assumption \ref{ass:locglob_Stratonovich_strong_weak}\ref{it:psi_3_null_regularity_assumption_phi_boundary} and an integration by parts yield again a bound of the form 
$\lesssim\eps\int_0^t\||\tilde{v}^\eps||\nabla \tilde{v}^\eps|\|_{L^2(\O)}^2\dd s$. Thus, putting all together and keeping in mind also the lower order terms in the operator $\Lvphi$,
$$
J_5(t)\lesssim \eps\int_0^t\big(1+\||\tilde{v}^\eps||\nabla \tilde{v}^\eps|\|_{L^2(\O)}^2+ \|\wt{v^\varepsilon}\|_{L^4(\O)}^4\big)\dd s .
$$ 
Combining the previous estimates, we proved that, for a sufficiently small $\eps_0'$, \eqref{eq:z^epsest} and its implications remain valid.

In Subsection \ref{sub:tilt H1}, we have one extra term in \eqref{eq:J6}, namely
\[
J_6(t)\ceqq J_1(t)- 2\eps\int_0^t \int_\O \Delta v^\eps \cdot \hhp[\Lvphi v^\eps + \Lvp v^\eps]\dd x\dd s \lesssim \eps\int_0^t\| {v}^\eps\|_{H^2(\O)}^2\dd s .
\]
Thus again, for a small enough $\eps_0$, \eqref{eq:x^epsest} and its implications hold, finishing the verification of condition  \ref{it:3strat} of Theorem \ref{th:LDP general Strat}. 

It remains to address Assumptions \ref{ass:critvarsettinglocal} and  \ref{ass:coer replace}\ref{it:2}. 
  There is one issue: the parabolicity of Assumption \ref{it:well_posedness_primitive_parabolicity} does not necessarily hold under Assumption \ref{ass:locglob_Stratonovich_strong_weak}, so we cannot apply Sections \ref{sec:proof main result} and \ref{sec:skeleton} to $(A,B)$. 
  However, by a perturbation argument as in \cite[Cor.\ 5.6]{T25L2LDP}, we resolve this.
  
  First, observe that   for a sufficiently small $\delta>0$, $(A,\delta B)$ satisfies the full Assumption   \ref{ass:well_posedness_primitive} (including the parabolicity in \ref{it:well_posedness_primitive_parabolicity}) as well as  Assumption \ref{ass:global_primitive}. Therefore, Section \ref{sec:proof main result} yields that Assumption \ref{ass:critvarsettinglocal} is satisfied for $(A,\delta B)$, and Section \ref{sec:skeleton} yields that Assumption \ref{ass:coer replace}\ref{it:2} is satisfied for  $(A,\delta B)$. Then, note that $(A,\delta B)$ also satisfies Assumption \ref{ass:locglob_Stratonovich_strong_weak}, so the proof above also applies to $(A,\delta B)$, giving that \ref{it:1strat} and \ref{it:3strat} are satisfied. Combining these, we can apply Theorem \ref{th:LDP general Strat} to $(A,\delta B)$ and obtain the LDP for the solutions to \eqref{eq:primitive_Stratonovich}--\eqref{eq:primitive_Stratonovich_boundary_conditions} in which $(\phi_n)$ and $(\psi_n)$ are replaced by  $(\delta\phi_n)$ and $(\delta\psi_n)$ respectively. Finally, the latter implies the LDP for the original equation \eqref{eq:primitive_Stratonovich}--\eqref{eq:primitive_Stratonovich_boundary_conditions}, by doing the substitutions  $\eps\mapsto \eps\delta^2$ in \eqref{eq:primitive_Stratonovich}--\eqref{eq:primitive_Stratonovich_boundary_conditions} and $\varphi\mapsto \delta^{-1}\varphi$  in the skeleton equation \eqref{eq:primitive_skeleton}, \eqref{eq:boundary_conditions_strong_weak}. For details on this last  argument, we refer to the end of the proof of \cite[Cor.\ 5.6]{T25L2LDP}. This completes the proof of Theorem \ref{th:stratonovich}. 
\end{proof}

\appendix

\section{An abstract LDP theorem}\label{appendix}

Here we discuss  special cases of the results in \cite{T25L2LDP}, which are used to prove the LDP for the stochastic primitive equations. Consider an abstract stochastic evolution equation: 
\begin{equation}\label{eq:SPDEapp}
   \begin{cases}
  &\dd Y^\eps(t)=-A(t,Y^\eps(t))\dd t+\sqrt{\eps}B(t,Y^\eps(t))\dd W(t), \quad t\in[0,T], \\
  &Y^\eps(0)=u_0\in H,
\end{cases}
\end{equation}  
equipped with a Gelfand triple $(V,H,V^*)$. We consider solutions $Y^\eps$ taking values in the following maximal regularity space:
\begin{equation*}
    \MR(0,T)\coloneqq C([0,T];H)\cap L^2(0,T;V), \quad \|\cdot\|_{\MR(0,T)}\coloneqq \|\cdot\|_{C([0,T];H)}+\|\cdot\|_{L^2(0,T;V)}.
\end{equation*}

We will make  two assumptions for the coefficients $A$ and $B$. The following assumption  is a special case of  \cite[Ass.\ 2.1]{T25L2LDP}, taking into account Remark \ref{rem:growth from lip} below. 
 
\begin{assumption}\noindent\phantomsection\label{ass:critvarsettinglocal} 

\noindent
\begin{enumerate}[label=\textit{(\arabic*)},ref=\ref{ass:critvarsettinglocal}\textit{(\arabic*)}]
\item\label{it:gen0} $A(t,v)=A_0(t)v-F(t,v)$ and  $B(t,v)=B_0(t)v+G(t,v)$, where
\begin{itemize}
    \item $ 
A_0\colon \R_+ \to\mathcal{L}(V,V^*) \text{ and } B_0\colon \R_+ \to \mathcal{L}(V,\UH)) 
$ 
are $\BB(\R_+)$-measurable, 
\item $
F\colon \R_+\times V\to V^* \text{ and } G\colon \R_+\times V\to\UH
$ 
are $\BB(\R_+)\otimes \mathcal{B}(V)$-measurable, 
\item $
F(\cdot,0)\in L^2_\loc(\R_+;V^*) \text{ and } G(\cdot,0)\in L^2_\loc(\R_+;\UH).
$
\end{itemize}

 \item\label{it:coercivelinear}  
  There exist  $\theta,M>0$ such that for all $t\in\R_+,v\in V$: 
  \[
  \<A_0(t)v,v\?-\frac{1}{2}\nn B_0(t)v\nn_H^2\geq \theta \|v\|_V^2-M \|v\|_H^2.
  \]  
  
\item \label{it:growth AB} There exist $m\in\N$ and $\rho_j\geq 0$, $\beta_j\in(\frac{1}{2},1)$ such that 
\[
2\beta_j\leq 1+\frac{1}{1+\rho_j},\quad j\in\{1,\ldots, m\},
\]
and  there exists a constant $C $ such that for all $t\in\R_+$ and $u,v\in V$: 
\begin{align*}
&\|A_0(t)\|_{\mathcal{L}(V,V^*)}+\| B_0(t)\|_{\mathcal{L}(V,\UH)} \leq C,\\
&\|F(t,u)-F(t,v)\|_{V^*}+\nn G(t,u)-G(t,v)\nn_{H} \leq C\textstyle{\sum_{j=1}^{m}} (1+\|u\|_{\beta_j}^{\rho_j}+\|v\|_{\beta_j}^{\rho_j})\|u-v\|_{\beta_j}. 
\end{align*}
\end{enumerate}
\end{assumption}

Note that Assumption  \ref{it:coercivelinear} encodes coercivity of the linear part $(A_0,B_0)$ of $(A,B)$, but coercivity need not hold for the full pair $(A,B)$. 

\begin{remark}\label{rem:growth from lip}
If $F(\cdot,0)\in  L^2_\loc(\R_+;V^*) $ and $G(\cdot,0)\in L^2_\loc(\R_+;\UH)$, and if $F$ and $G$ satisfy the Lipschitz conditions of Assumption \ref{it:growth AB}, then it is no longer necessary to verify the growth bounds for $F$ and $G$ in  \cite[Ass.\ 2.1(3)(4)]{T25L2LDP}.   Indeed, define $F_1(\cdot,u)\ceqq F(\cdot,u)-F(\cdot,0)$, $f_1\ceqq F(\cdot,0)+f$. Then $F_1$ satisfies both the growth and Lipschitz bounds of \cite[Ass.\ 2.1(4)]{T25L2LDP}, and $A= A_0-F-f= A_0-F_1-f_1$. The analogous trick works for $G$. 
\end{remark}

For the LDP, we consider two additional equations related to \eqref{eq:SPDEapp}. 
The skeleton equation for given $\varphi\in L^2(0,T;U)$ and $u_0\in H$, is:  
\begin{equation}\label{eq:skeleton}
   \begin{cases}
  &\dd u^\varphi(t)=-A(t,u^\varphi(t))\dd t+B(t,u^\varphi(t))\varphi(t)\dd t, \quad t\in[0,T], \\
  &u^\varphi(0)=u_0.
\end{cases}
\end{equation}  
Also, we consider the following stochastic evolution equation:
\begin{equation}\label{eq:SPDE tilted}
   \begin{cases}
  &\dd X^\eps(t)=-A(t,X^\eps(t))\dd t+B(t,X^\eps(t))\bphieps(t)\dd t+\sqrt{\eps}B(t,X^\eps(t))\dd W, \quad t\in[0,T], \\
  &X^\eps(0)=u_0,
\end{cases}
\end{equation} 
where $\bphieps$ is  a predictable stochastic process taking a.s.\ values in $L^2(0,T;U)$. 

The following solution concept is used for \eqref{eq:SPDEapp}--\eqref{eq:SPDE tilted}. 

\begin{definition}\label{def:sol}
Let $T>0$, let $(V,H,V^*)$ be a Gelfand triple and let $U$ be a real separable Hilbert space. 
Let $A\colon[0,T]\times V\to V^*$, $B\colon[0,T]\times V\to \UH$ and let $u_0\in H$. Let $W$ be a $U$-cylindrical Brownian motion on a filtered probability space $(\Om,\mathcal{F},(\mathcal{F}_t)_{t\geq 0},\P)$. Consider
\begin{equation}\label{eq:SPDE plain}
     \begin{cases}
   &\dd u(t)=-A(t,u(t))\dd t+B(t,u(t))\dd W(t), \quad t\in[0,T], \\
   &u(0)=u_0,
\end{cases}
\end{equation}
We say that a strongly progressively measurable process $u\colon [0,T]\times \Om\to V$ is a \emph{strong solution} to \eqref{eq:SPDE plain} if a.s.
$
u\in\MR(0,T)$, $A(\cdot,u(\cdot))\in L^2(0,T;V^*)+L^1(0,T;H)$, $B(\cdot,u(\cdot))\in L^2(0,T;\UH)$ and a.s.:
\begin{align}\label{eq:strong sol}
  u(t)=u_0-\int_0^t A(s,u(s))\dd s+\int_0^t B(s,u(s))\dd W(s) \: \text{ in }V^* \text{ for all }t\in[0,T].
\end{align}

If $B=0$, we write $u'(t)=-A(t,u(t))$ instead of $\dd u(t)=-A(t,u(t))\dd t$ in \eqref{eq:SPDE plain} and we call $u\in \MR(0,T)$ a strong solution  if $A(\cdot,u(\cdot))\in L^2(0,T;V^*)+L^1(0,T;H)$ and \eqref{eq:strong sol} holds.
\end{definition}
 
The following assumption is made to compensate for the missing coercivity of $(A,B)$. 
   
\begin{assumption}\label{ass:coer replace}
$(V,H,V^*)$ is a Gelfand triple of real separable Hilbert spaces, $U$ is a real separable Hilbert space and $u_0\in H$. Moreover, 
$(A,B)$ satisfies Assumption  \ref{ass:critvarsettinglocal}.  
In addition, the following conditions are satisfied for some $\eps_0>0$:
\begin{enumerate}[label=\textit{(\Roman*)},ref=\textit{(\Roman*)}]
  \item\label{it:1} For every $U$-cylindrical Brownian motion $W$ on a filtered probability space $(\Om,\mathcal{F},(\mathcal{F}_t)_{t\geq 0},\P)$ and $\eps\in (0,\eps_0)$, \eqref{eq:SPDEapp} has a unique strong solution $Y^\eps$ in the sense of Definition \ref{def:sol}. 
  \item\label{it:2} For every ${T}>0$ and $\varphi\in L^2(0,{T};U)$: if   $u^{\varphi}$ is a strong solution to \eqref{eq:skeleton}, then it satisfies 
  \begin{equation}\label{eq:skeleton a priori}
   \|u^\varphi\|_{\MR(0,{T})}\leq C_{u_0}({T},\|\varphi\|_{L^2(0,{T};U)}),
  \end{equation}
   with $C_{u_0}\col \R_+\times \R_+\to\R_+$  non-decreasing in both components. 
\item\label{it:3} For every $T,K>0$ and for every collection $(\bphieps)_{\eps\in(0,\eps_0)}$ of predictable stochastic processes $\bphieps\col [0,T]\times \Om\to U$ with   $\|\bphieps\|_{L^2(0,T;U)}\leq K$ a.s., the following holds: 
if $(X^\eps)_{\eps\in(0,\eps_0)}$ is a family such that for all $\eps\in(0,\eps_0)$, $X^\eps$ is a strong solution to \eqref{eq:SPDE tilted}, then 
\begin{equation*}
    \lim_{\gamma\to\infty}\sup_{\eps\in(0,\eps_0)}\P(\|X^\eps\|_{\MR(0,T)}>\gamma)=0. 
\end{equation*}
\end{enumerate} 
\end{assumption}

A special case of the result \cite[Th.\ 2.9]{T25L2LDP} now reads as follows. It coincides with the latter result, for the special case of semilinear equations, $m=1,\alpha_1=0,\hat{f}=g=0$ and with uniform constants.  

\begin{theorem}\label{th:LDP general}
    Let Assumptions \ref{ass:critvarsettinglocal} and \ref{ass:coer replace} hold and let $T>0$. Then the family $(Y^\eps)_{\eps\in(0,\eps_0)}$ of solutions to \eqref{eq:SPDEapp} satisfies the large deviation principle on $L^2(0,T;V)\cap C([0,T];H)$ with rate function  $I\colon L^2(0,T;V)\cap C([0,T];H)\to [0,+\infty]$ given by
\begin{equation*} 
I(z)=\frac{1}{2}\inf\Big\{\textstyle{\int_0^T}\|\varphi(s)\|_U^2\dd s : \varphi\in L^2(0,T;U), \, z=u^{\varphi}\Big\},
\end{equation*}
where $\inf\varnothing\coloneqq +\infty$ and $u^\varphi$ is a strong solution to \eqref{eq:skeleton}, which exists uniquely. 
\end{theorem}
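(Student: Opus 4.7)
\medskip

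\noindent\textbf{Proof proposal for Theorem \ref{th:LDP general}.}
The plan is to apply the Budhiraja--Dupuis weak convergence approach, reformulated for the Laplace principle, which is equivalent to the LDP on the Polish space $\mathcal{E}\ceqq L^2(0,T;V)\cap C([0,T];H)$. After verifying that $I$ is well-defined (which needs well-posedness of the skeleton equation \eqref{eq:skeleton}), the task reduces to two ``BD conditions'': (i) a deterministic compactness/continuity statement for the skeleton map $\varphi\mapsto u^\varphi$, and (ii) a stochastic weak-convergence statement for the controlled equation \eqref{eq:SPDE tilted} as $\eps\downarrow 0$. The key novelty, in contrast to variational-setting LDPs, is that the non-coercive pair $(A,B)$ is handled by splitting off the coercive linear part $(A_0,B_0)$ and using the a priori bounds supplied by Assumption \ref{ass:coer replace}\ref{it:2}--\ref{it:3}.

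\smallskip

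First, I would establish well-posedness for \eqref{eq:skeleton}. By Assumption \ref{ass:critvarsettinglocal}, the pair $(A_0,B_0)$ is coercive and $(F,G)$ is locally Lipschitz with the subcritical scaling $2\beta_j\le 1+(1+\rho_j)^{-1}$. A standard truncation/fixed-point argument in $\MR(0,{T_0})$ on small intervals produces a unique local-in-time $L^2$-maximal solution $u^\varphi$, with blow-up criterion $\|u^\varphi\|_{\MR(0,\tau)}\to\infty$ as $\tau\uparrow$ maximal time. The deterministic a priori bound \eqref{eq:skeleton a priori} from Assumption \ref{ass:coer replace}\ref{it:2} rules out blow-up, so $u^\varphi$ exists globally on $[0,T]$. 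The same argument also shows stability in bounded sets: if $\sup_n\|\varphi_n\|_{L^2(0,T;U)}<\infty$, then $\sup_n\|u^{\varphi_n}\|_{\MR(0,T)}<\infty$ via \eqref{eq:skeleton a priori}.

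\smallskip

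For the compactness/continuity condition, I would prove that if $\varphi_n\rightharpoonup\varphi$ in $L^2(0,T;U)$ with $\sup_n\|\varphi_n\|_{L^2(0,T;U)}\le M$, then $u^{\varphi_n}\to u^\varphi$ strongly in $\mathcal{E}$. The uniform bound in $\MR(0,T)$ together with the equation gives a uniform bound for $(u^{\varphi_n})'$ in $L^2(0,T;V^*)+L^1(0,T;H)$; by an Aubin--Lions type argument with the Gelfand triple $(V,H,V^*)$ and interpolation with the intermediate spaces $V_{\beta_j}$ appearing in Assumption \ref{it:growth AB}, one extracts a subsequence converging strongly in $L^2(0,T;V_\alpha)$ for any $\alpha<1$ and in $C([0,T];V_{-\delta})$ for some small $\delta>0$. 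The linear terms $A_0u^{\varphi_n}$ and $B_0u^{\varphi_n}\varphi_n$ pass to the limit by weak-weak/weak-strong bilinear convergence (crucially $B_0(\cdot)\varphi_n$ converges weakly in $L^1(0,T;H)$ against the strong convergence of $u^{\varphi_n}$), and the nonlinear terms $F(\cdot,u^{\varphi_n})$, $G(\cdot,u^{\varphi_n})\varphi_n$ pass to the limit using the local Lipschitz estimates and strong convergence in $L^2(0,T;V_{\beta_j})$. Uniqueness of the limit then gives convergence of the whole sequence. Strong convergence in $C([0,T];H)$ is obtained by applying the energy identity (chain rule on the Gelfand triple) to $u^{\varphi_n}-u^{\varphi}$ and exploiting Assumption \ref{it:coercivelinear}. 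As a corollary, $I$ has compact sublevel sets.

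\smallskip

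For the weak-convergence condition---and this is where the lack of coercivity is the main obstacle---I would consider, for each $\eps$, a predictable $\bphieps$ with $\|\bphieps\|_{L^2(0,T;U)}\le K$ a.s., and use Assumption \ref{ass:coer replace}\ref{it:1} and Girsanov to obtain a unique strong solution $X^\eps$ to \eqref{eq:SPDE tilted}. The central quantitative input is Assumption \ref{ass:coer replace}\ref{it:3}, which gives tightness of $(X^\eps)$ in $\MR(0,T)$ uniformly over all such controls in probability. Combined with the Jakubowski--Skorokhod representation theorem (applied to the tuple $(X^\eps,\bphieps,\sqrt{\eps}W)$ on the product of $\mathcal{E}$, the ball $B_K\subset L^2(0,T;U)$ equipped with the weak topology, and $C([0,T];U_0)$ for a suitable Hilbert extension $U_0$), one obtains a.s.\ convergent versions $(\widetilde X^\eps,\widetilde\bphieps,\sqrt{\eps}\widetilde W)$ on a new probability space. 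One then passes to the limit in the integral form of \eqref{eq:SPDE tilted}: the stochastic integral vanishes in $L^2(\widetilde\Om;C([0,T];H))$ because $\eps\|B(\cdot,\widetilde X^\eps)\|^2_{L^2(0,T;\UH)}\to 0$ in probability (using $\|B_0\|$ bounded and the tightness bound on $\widetilde X^\eps$), while the drift and the $B\bphieps$ terms pass to the limit by exactly the same continuity/compactness argument as in the deterministic step, now applied pathwise to $\widetilde X^\eps$. The limit $\widetilde X$ solves the skeleton equation with control $\widetilde\varphi$, so by uniqueness $\widetilde X=u^{\widetilde\varphi}$ and the joint law of $(X^\eps,\bphieps)$ converges to that of $(u^{\widetilde\varphi},\widetilde\varphi)$.

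\smallskip

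Finally, plugging these two conditions into the Budhiraja--Dupuis variational representation
\[
-\eps\log\E\bigl[e^{-h(Y^\eps)/\eps}\bigr]
=\inf_{\bphieps}\E\Bigl[\tfrac12\textstyle\int_0^T\|\bphieps(s)\|_U^2\dd s+h(X^\eps(\bphieps))\Bigr]
\]
for $h\in C_b(\mathcal{E})$, the standard machinery of \cite{budhidupuis01} yields the Laplace principle with rate function $I$, hence the LDP of Definition \ref{def: LDP}. The hardest single step is the weak-convergence condition, because without coercivity of $(A,B)$ one has no uniform moment bounds on $X^\eps$; Assumption \ref{ass:coer replace}\ref{it:3} is tailored precisely to replace these bounds by tightness in probability, which is sufficient for Skorokhod-representation passage to the limit.
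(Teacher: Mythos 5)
Your proposal is not comparable to an internal argument of the paper, because the paper does not prove Theorem \ref{th:LDP general} at all: it is quoted verbatim as a special case of \cite[Th.\ 2.9]{T25L2LDP} (semilinear case, $m=1$, $\alpha_1=0$, $\hat f=g=0$), and the whole point of the paper's architecture is to outsource exactly the Budhiraja--Dupuis machinery you sketch. So what you are really attempting is a proof of the abstract result of \cite{T25L2LDP}, and at that level of generality your sketch has two genuine gaps. First, Assumptions \ref{ass:critvarsettinglocal} and \ref{ass:coer replace} concern an arbitrary Gelfand triple of separable Hilbert spaces; no compactness of $V\into H$ is assumed. Your compactness/continuity step for the skeleton map invokes ``an Aubin--Lions type argument'' to extract subsequences converging strongly in $L^2(0,T;V_\alpha)$, which is simply unavailable without a compact embedding. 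To prove $u^{\varphi_n}\to u^{\varphi}$ in $\mathcal{E}$ under $\varphi_n\rightharpoonup\varphi$ one has to argue differently, e.g.\ by an energy/Gr\"onwall estimate for the difference $u^{\varphi_n}-u^{\varphi}$, using the coercivity of $(A_0,B_0)$ from Assumption \ref{it:coercivelinear}, the local Lipschitz structure of \ref{it:growth AB}, the uniform bound \eqref{eq:skeleton a priori}, and the fact that $\varphi\mapsto\int_0^{\cdot}B(s,u^{\varphi}(s))\varphi_n(s)\dd s$ sends weakly convergent controls to strongly convergent primitives because $B(\cdot,u^{\varphi}(\cdot))$ takes values in Hilbert--Schmidt operators; as written, your argument only covers situations (such as the primitive equations) where compact Sobolev embeddings happen to be available, not the stated theorem.

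Second, you assert that Assumption \ref{ass:coer replace}\ref{it:3} ``gives tightness of $(X^\eps)$ in $\MR(0,T)$''. It does not: it gives uniform boundedness in probability of the $\MR(0,T)$-norm, and closed $\MR$-balls are not compact in $\mathcal{E}=L^2(0,T;V)\cap C([0,T];H)$ (again, no compact embeddings, and even with them one would still need time-equicontinuity beyond an $\MR$ bound). Consequently the Jakubowski--Skorokhod step, which is the heart of your treatment of the controlled equation \eqref{eq:SPDE tilted}, cannot be launched from \ref{it:3} alone. The natural repair, consistent with how \ref{it:3} is formulated, is to verify the second Budhiraja--Dupuis condition in its ``convergence in probability'' form: show directly that $X^\eps-u^{\bphieps}\to 0$ in probability in $\mathcal{E}$ by applying the It\^o formula to the difference, using the coercive linear part, stochastic Gr\"onwall, and the stopping times furnished by \ref{it:3} (and by \eqref{eq:skeleton a priori}) to localize the non-coercive nonlinearities; no Skorokhod representation or tightness of laws is then needed. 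A smaller but related soft spot is the skeleton well-posedness: with $B(\cdot,u)\varphi\in L^1(0,T;H)$ rather than $L^2(0,T;V^*)$, the ``standard truncation/fixed-point argument'' is not off-the-shelf (the paper explicitly flags this and again defers to \cite{T25L2LDP}), so this step too needs an argument adapted to $L^2(0,T;V^*)+L^1(0,T;H)$ drifts rather than a citation to routine variational theory.
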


Next, we consider another abstract evolution equation, which corresponds to \eqref{eq:SPDEapp} with an additional term $\eps L\tilde{Y}^\eps$. The latter can be chosen as an It\^o--Stratonovich correction term when treating Stratonovich noise. 
\begin{equation}\label{eq:SPDEappStrat}
   \begin{cases}
  &\dd \tilde{Y}^\eps(t)=-A(t,\tilde{Y}^\eps(t))\dd t+\eps L\tilde{Y}^\eps(t)+\sqrt{\eps}B(t,\tilde{Y}^\eps(t))\dd W(t), \quad t\in[0,T], \\
  &\tilde{Y}^\eps(0)=u_0\in H.
\end{cases}
\end{equation} 
To formulate the result, we also consider the following associated problem:
\begin{equation}\label{eq:SPDE tiltedStrat}
   \begin{cases}
  &\dd \tilde{X}^\eps(t)=-A(t,\tilde{X}^\eps(t))\dd t+\eps L\tilde{Y}^\eps(t)+B(t,\tilde{X}^\eps(t))\bphieps(t)\dd t+\sqrt{\eps}B(t,\tilde{X}^\eps(t))\dd W, \quad t\in[0,T], \\
  &\tilde{X}^\eps(0)=u_0. 
\end{cases}
\end{equation} 

A special case of \cite[Th.\ 5.3]{T25L2LDP} is stated below. It coincides with the latter result in the special case of semilinear equations, $m=1,\alpha_1=0,\hat{f}=g=0$, uniform constants, $\eta(\eps)=\eps$ and $\tilde{A}(t,\cdot)=L\in\mathcal{L}(V,V^*)$. We note that these $\eta(\eps)$ and $\tilde{A}$  satisfy \cite[Ass.\ 5.1(i)(ii)]{T25L2LDP}.

\begin{theorem}\label{th:LDP general Strat} Let $T>0$. 
    Let Assumptions \ref{ass:critvarsettinglocal} and \ref{ass:coer replace} hold, where   
     Assumption \ref{ass:coer replace}\ref{it:1} and \ref{it:3} are replaced by: 
    \begin{enumerate}[label=\textit{( $\widetilde{\!\!\text{\Roman*}}$)},ref=\textit{( $\widetilde{\!\!\text{\Roman*}}$)}]
  \item\label{it:1strat} For every $U$-cylindrical Brownian motion $W$ on a filtered probability space $(\Om,\mathcal{F},(\mathcal{F}_t)_{t\geq 0},\P)$ and $\eps\in (0,\eps_0)$, \eqref{eq:SPDEappStrat} has a unique strong solution $\tilde{Y}^\eps$ in the sense of Definition \ref{def:sol}.  
      \setcounter{enumi}{2}
\item\label{it:3strat} For every $T,K>0$ and for every collection $(\bphieps)_{\eps\in(0,\eps_0)}$ of predictable stochastic processes $\bphieps\col [0,T]\times \Om\to U$ with   $\|\bphieps\|_{L^2(0,T;U)}\leq K$ a.s., the following holds: 
if $(\tilde{X}^\eps)_{\eps\in(0,\eps_0)}$ is a family such that for all $\eps\in(0,\eps_0)$, $X^\eps$ is a strong solution to \eqref{eq:SPDE tiltedStrat}, then 
\begin{equation*}
    \lim_{\gamma\to\infty}\sup_{\eps\in(0,\eps_0)}\P(\|\tilde{X}^\eps\|_{\MR(0,T)}>\gamma)=0. 
\end{equation*}
\end{enumerate} 
    
     Then the family $(\tilde{Y}^\eps)_{\eps\in(0,\eps_0)}$ of solutions to \eqref{eq:SPDEappStrat} satisfies the large deviation principle on $L^2(0,T;V)\cap C([0,T];H)$ with the same rate function as in Theorem \ref{th:LDP general}. 
\end{theorem}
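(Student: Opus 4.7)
The plan is to reduce the result to the corresponding It\^o-type Theorem \ref{th:LDP general} by absorbing the additional deterministic term $\eps L\tilde Y^\eps\,\dd t$ into an $\eps$-dependent drift, and then to run the Budhiraja--Dupuis weak convergence machinery essentially verbatim, using the fact that this extra term is $O(\eps)$ in the norm of $V^*$ and hence disappears in the small-noise limit. Accordingly, I would set
\[
A^{(\eps)}(t,v)\coloneqq A(t,v)-\eps L v,\qquad v\in V,\ t\in[0,T],
\]
so that \eqref{eq:SPDEappStrat} and the associated controlled equation \eqref{eq:SPDE tiltedStrat} become instances of \eqref{eq:SPDEapp} and \eqref{eq:SPDE tilted} with coefficient pair $(A^{(\eps)},B)$. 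Since $L\in\mathcal{L}(V,V^*)$, for $\eps_0$ sufficiently small the linear part $A_0-\eps L$ of $A^{(\eps)}$ still satisfies the coercivity bound of \ref{it:coercivelinear} (with, say, half the original coercivity constant), and \ref{it:growth AB} remains valid with uniform constants. Thus Assumption \ref{ass:critvarsettinglocal} holds for $(A^{(\eps)},B)$ with bounds independent of $\eps\in(0,\eps_0)$.

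Next, I would observe that the skeleton equation associated with $(A^{(\eps)},B)$ is \eqref{eq:skeleton} with an additional term $\eps L u^{(\eps),\varphi}\,\dd t$, and at $\eps=0$ this collapses to exactly \eqref{eq:skeleton}; hence the candidate rate function is the one stated in Theorem \ref{th:LDP general}. By \cite{budhidupuis01}, it suffices to establish two things: (a) for every $\varphi\in L^2(0,T;U)$, \eqref{eq:skeleton} admits a unique strong solution $u^\varphi$ obeying $\|u^\varphi\|_{\MR(0,T)}\leq C_{u_0}(T,\|\varphi\|_{L^2(0,T;U)})$; and (b) whenever $(\bphieps)$ is a family of predictable controls with $\|\bphieps\|_{L^2(0,T;U)}\leq K$ a.s.\ and $\bphieps\to\varphi$ in distribution in the weak topology of $L^2(0,T;U)$, the corresponding controlled processes $\tilde X^\eps$ converge to $u^\varphi$ in distribution in $\MR(0,T)$. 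Point (a) is essentially Assumption \ref{ass:coer replace}\ref{it:2}, combined with well-posedness of the skeleton (which follows from Assumption \ref{ass:critvarsettinglocal} and the a priori bound via standard monotone-type arguments, cf.\ \cite[\S4]{T25L2LDP}).

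For (b), I would mimic the tightness/identification scheme of \cite[\S4]{T25L2LDP}: the probabilistic boundedness of $(\tilde X^\eps)$ in $\MR(0,T)$ granted by \ref{it:3strat} furnishes tightness of the laws of $(\tilde X^\eps,\bphieps,\sqrt{\eps}W)$ on $\MR(0,T)$ equipped with a suitable weak-type topology; a Skorokhod/Jakubowski representation then produces almost surely convergent versions along a subsequence, and the limit is identified as $u^\varphi$ using the uniqueness of the skeleton equation. The novel term is handled by the straightforward estimate
\[
\|\eps L\tilde X^\eps\|_{L^2(0,T;V^*)}\leq \eps\,\|L\|_{\mathcal{L}(V,V^*)}\,\|\tilde X^\eps\|_{L^2(0,T;V)}\longrightarrow 0\quad\text{in probability as }\eps\downarrow 0,
\]
by \ref{it:3strat}; thus this term plays no role in the limit identification, and the extra factor $\eps$ also kills its contribution to the Girsanov-type change of measure used to convert \eqref{eq:SPDEappStrat} into \eqref{eq:SPDE tiltedStrat}.

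The main obstacle is the identification step for the nonlinear pieces $F$ and $G$: they are only locally Lipschitz with the critical growth $2\beta_j\leq 1+1/(1+\rho_j)$ of \ref{it:growth AB}, and the a priori bound \ref{it:3strat} is merely in probability, not in $L^p(\Omega)$. So one cannot argue via uniform integrability; instead, one must upgrade the weak-in-law compactness obtained from \ref{it:3strat} to strong compactness in intermediate spaces $V_{\beta_j}$, along the subsequence provided by Skorokhod, to make sense of $F(\cdot,\tilde X^\eps)\to F(\cdot,u^\varphi)$ and $G(\cdot,\tilde X^\eps)\to G(\cdot,u^\varphi)$ in the appropriate sense. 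This is the technical heart of \cite[Th.\ 5.3]{T25L2LDP}, of which Theorem \ref{th:LDP general Strat} is the semilinear specialization corresponding to the choice $\eta(\eps)=\eps$ and $\widetilde A(t,\cdot)=L\in\mathcal{L}(V,V^*)$.
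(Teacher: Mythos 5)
Your proposal lands in the same place as the paper: the theorem is not proved from scratch but is exactly the semilinear specialization of \cite[Th.\ 5.3]{T25L2LDP} with $m=1$, $\eta(\eps)=\eps$ and $\widetilde{A}(t,\cdot)=L\in\mathcal{L}(V,V^*)$ (the paper's entire ``proof'' is this identification plus the remark that these choices satisfy \cite[Ass.\ 5.1(i)(ii)]{T25L2LDP}), and your closing paragraph makes precisely this appeal. The preceding sketch of the Budhiraja--Dupuis/weak-convergence machinery is therefore redundant scaffolding rather than a genuinely different argument, and it is not self-contained (the identification step with only in-probability bounds is deferred to the same citation), but this does not affect the correctness of the proposal.
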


\printbibliography

\end{document}